\def\AA{\mathord{\mathbf{A}}}
\def\CC{\mathord{\mathbf{C}}}
\def\RR{\mathord{\mathbf{R}}}
\def\QQ{\mathord{\mathbf{Q}}}
\def\ZZ{\mathord{\mathbf{Z}}}
\def\NN{\mathord{\mathbf{N}}}
\def\mm{\mathord{\mathfrak{m}}}
\def\ord{{\rm ord}}
\def\pr{{\rm pr}}
\def\End{\mathop{\rm End}}
\def\Im{\mathop{\rm Im}}
\def\Re{\mathop{\rm Re}}
\def\Spec{\mathop{\rm Spec}}
\def\SL{\mathop{\rm SL}}
\def\trdeg{\text{\rm trdeg}}
\def\dR{\text{\rm dR}}
\def\mult{\text{\rm mult}}
\def\comp{\text{\rm comp}}
\def\defeq{\coloneqq}
\def\tensor{\otimes}
\def\to{\longrightarrow}
\def\mapsto{\longmapsto}
\newtheorem{theorem}{Theorem}[section]
\newtheorem{prop}[theorem]{Proposition}
\newtheorem{conj}[theorem]{Conjecture}
\newtheorem{lemma}[theorem]{Lemma}
\newtheorem{coro}[theorem]{Corollary}
\theoremstyle{definition}
\newtheorem{ex}[theorem]{Example}
\newtheorem{defi}[theorem]{Definition}
\newtheorem{obs}[theorem]{Remark}
\newtheorem{exo}[theorem]{Exercise}
\numberwithin{equation}{section}
\title{A geometric introduction to transcendence questions on values of modular forms}
\author{Tiago J. Fonseca}
\address{Mathematical Institute, University of Oxford, Andrew Wiles Building, Radcliffe Observatory Quarter, Woodstock Road, Oxford, OX2 6GG, United Kingdon}
\email{tiago.jardimdafonseca@maths.ox.ac.uk}
\begin{document}
\maketitle

\tableofcontents

\section{Introduction}

One of the most striking arithmetical applications of Ramanujan's relations between the normalised Eisenstein series $E_2$, $E_4$, $E_6$ (see \cite{ramanujan} and \cite{NP01} Chapter 1), namely
\begin{align} \label{rameq} \tag{R}
\frac{1}{2\pi i}\frac{dE_2}{d\tau} = \frac{E_2^2 - E_4}{12}\text{, }\ \ \frac{1}{2\pi i}\frac{dE_4}{d\tau} = \frac{E_2E_4 - E_6}{3}\text{, } \ \ \frac{1}{2\pi i}\frac{dE_6}{d\tau} = \frac{E_2E_6 - E_4^2}{2}
\text{,}
\end{align}
is the following algebraic independence theorem proved by Nesterenko in 1996. 

\begin{theorem}[\cite{nesterenko96}]
 For every $\tau \in \mathbf{H} = \{z \in \CC \mid \Im z >0\}$, we have
 $$
 \trdeg_{\QQ}\QQ(e^{2\pi i \tau},E_2(\tau),E_4(\tau),E_6(\tau))\ge 3\text{.}
 $$
\end{theorem}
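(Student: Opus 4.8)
The plan is to carry out Nesterenko's argument, whose engine is precisely Ramanujan's relations \eqref{rameq}. Fix $\tau_0 \in \mathbf{H}$, put $q_0 = e^{2\pi i \tau_0}$ (so $0 < |q_0| < 1$), and let $\underline\theta = (q_0, E_2(\tau_0), E_4(\tau_0), E_6(\tau_0)) \in \CC^4$; we must show $\trdeg_\QQ \QQ(\underline\theta) \ge 3$. The starting observation is that, read as polynomial identities in $X_1 = E_2$, $X_2 = E_4$, $X_3 = E_6$, the relations \eqref{rameq} say exactly that the operator $D = \frac{1}{2\pi i}\frac{d}{d\tau} = q\frac{d}{dq}$ maps $\QQ[E_2, E_4, E_6]$ into itself; equivalently, $\tau \mapsto (q, E_2, E_4, E_6)$ is — up to the harmless reparametrisation $\tau \mapsto 2\pi i \tau$ — an integral curve of the polynomial vector field on $\AA^4$
\[
\mathcal D \;=\; X_0\partial_{X_0} + \frac{X_1^2 - X_2}{12}\,\partial_{X_1} + \frac{X_1X_2 - X_3}{3}\,\partial_{X_2} + \frac{X_1X_3 - X_2^2}{2}\,\partial_{X_3}.
\]
I will also use two classical facts from the theory of modular forms: $q, E_2, E_4, E_6$ are algebraically independent over $\CC$ (so the integral curve is Zariski-dense in $\AA^4$), and $\Delta = (E_4^3 - E_6^2)/1728$ vanishes nowhere on $\mathbf{H}$; since also $q_0 \ne 0$, the point $\underline\theta$ avoids the two evident $\mathcal D$-invariant hypersurfaces $\{X_0 = 0\}$ and $\{X_2^3 = X_3^2\}$ (one computes $\mathcal D(X_0) = X_0$ and $\mathcal D(X_2^3 - X_3^2) = X_1(X_2^3 - X_3^2)$).

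Now the Diophantine construction, for a large integer $N$. The $\ZZ$-module of $P \in \ZZ[X_0,X_1,X_2,X_3]$ of total degree $\le N$ is free of rank $\binom{N+4}{4} \asymp N^4$. As $E_2, E_4, E_6$ have $q$-expansions with rational coefficients of bounded denominator and polynomial growth, requiring $F \coloneqq P(q,E_2,E_4,E_6)$ to vanish to order $\ge T$ at $q = 0$ imposes $T$ linear conditions on the coefficients of $P$ with controlled integer coefficients. Taking $T$ a suitable fixed fraction of $N^4$ and applying Siegel's lemma produces a nonzero $P = P_N$ of total degree $\le N$ and logarithmic height $O(N\log N)$ with $\ord_{q=0}F \ge T$; moreover $F \not\equiv 0$, by the algebraic independence of $q, E_2, E_4, E_6$. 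Using \eqref{rameq} — that is, the vector field $\mathcal D$ — this propagates: $D^jF = P_{N,j}(q,E_2,E_4,E_6)$ for $P_{N,j}$ of total degree $\le N+j$, with denominators dividing a power of $12$ and with $\log H(P_{N,j}) = O(N\log N) + O\!\big(j\log(N+j)\big)$. Since each $E_k$ is bounded on every circle $|q| = r$ with $|q_0| < r < 1$, the high-order zero of $F$ at the origin, the Schwarz lemma and Cauchy's inequalities yield $|D^jF(\tau_0)| \le \exp(-cT)$ for all $j$ up to a threshold $R$ comparable to $T$ (up to a logarithmic factor), with $c > 0$ depending only on $\tau_0$.

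The essential non-degeneracy input is a \emph{multiplicity (zero) estimate}: there is $c_0 > 0$, which can be taken uniform in $\tau_0 \in \mathbf{H}$, such that every nonzero $P \in \CC[X_0,X_1,X_2,X_3]$ of total degree $\le M$ satisfies $\ord_{q=q_0}P(q,E_2,E_4,E_6) \le c_0 M^4$. In particular the functions $F, DF, D^2F, \dots$ cannot all vanish at $\underline\theta$, while by the preceding step they become extremely small there; balancing $N$, $T$ and $R$ against $c_0$ and clearing denominators, one extracts from the jets $P_{N,j}$ a sequence of nonzero polynomials $Q_N \in \ZZ[X_0,X_1,X_2,X_3]$ of controlled degree and height with $Q_N(\underline\theta) \ne 0$ and yet $|Q_N(\underline\theta)|$ far smaller than any Liouville bound in terms of the size of $Q_N$. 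Feeding $(Q_N)_N$ into the algebraic-independence criterion of Philippon and Nesterenko — the refined version requiring only an upper bound on $|Q_N(\underline\theta)|$ and keeping track of vanishing orders — then forces $\trdeg_\QQ\QQ(\underline\theta) \ge 3$, the value $3$ being exactly what the numerology of the construction (size $\asymp N$, smallness $\asymp \exp(-cN^4)$, up to logarithmic factors) produces. Since $\tau_0 \in \mathbf{H}$ was arbitrary, the theorem follows.

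I expect the main obstacle to be the multiplicity estimate and its quantitative interaction with the analytic part — indeed this is where the bulk of Nesterenko's ingenuity lies. The curve $\tau \mapsto (q,E_2,E_4,E_6)$ is transcendental, so no B\'ezout bound is available; instead one must exploit the differential structure above and control the degrees, and the dimensions, of the chain of ideals generated by $P, \mathcal DP, \mathcal D^2P, \dots$, in the spirit of the multiplicity estimates of Nesterenko and of Brownawell--Masser, but adapted to the geometry of $\mathcal D$ — where determining the $\mathcal D$-invariant subvarieties (chiefly $\{X_0 = 0\}$ and the discriminant locus $\{X_2^3 = X_3^2\}$) is what makes the bound finite, of the shape $c_0 M^4$. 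Arranging that this exponent cooperates with the logarithmic losses inherent in the Schwarz and Cauchy estimates — reconciling the range of $j$ for which $D^jF(\tau_0)$ is provably small with the range the multiplicity estimate controls — is the delicate balancing on which the final bound $3$ depends.
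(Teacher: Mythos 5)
Your overall architecture is the one the paper follows: an auxiliary polynomial with a zero of order $\asymp d^4$ at $q=0$ via Siegel's lemma, propagation by the Ramanujan vector field, Cauchy--Schwarz-type upper bounds at the point, a multiplicity estimate of shape $C(\deg P)^4$ resting on the $D$-property (with the invariant loci $V(x_0)$ and $V(x_2^3-x_3^2)$), and finally a Philippon-type criterion. But there is a genuine gap at the decisive step. The criterion actually used (Theorem \ref{thm:philippon}) requires a \emph{two-sided} estimate $-ad^4 \le \log|Q_d(\underline{\theta})| \le -bd^4$, and your proposal never produces the lower bound. You replace it by (i) a multiplicity estimate at the interior point $q_0$ guaranteeing that some derivative $D^jF$ does not vanish at $\underline{\theta}$, and (ii) an appeal to a ``refined version requiring only an upper bound on $|Q_N(\underline{\theta})|$''. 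Nonvanishing alone does not suffice: the first nonvanishing derivative could a priori be far smaller than $\exp(-ad^4)$, and no standard criterion yields $\trdeg\ge 3$ from upper bounds plus nonvanishing alone --- this is exactly where algebraic independence is harder than transcendence. In Nesterenko's proof the lower bound is the most delicate analytic point (Lemma \ref{lemma:lemma2}): one shows that if all low-order Taylor coefficients of $f_d$ at $q=z$ were too small, then the first nonzero Taylor coefficient of $f_d$ at $q=0$ --- a nonzero rational integer --- would have absolute value $<1$, contradicting Remark \ref{rmk:ftt}. This integrality-at-the-cusp argument is precisely the ingredient your sketch omits, and without it the argument does not close.

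A second, smaller discrepancy: the multiplicity estimate furnished by the $D$-property machinery (Theorem \ref{thm:zerolemma}, Corollary \ref{coro:zlramanujan}) is taken at the singular point $q=0$, not at $q=q_0$, and it serves a different purpose in the proof: it bounds $m_d=\ord_{q=0}f_d=O(d^4)$ from above, which is what keeps $k_d$, $\deg Q_d$, $\log\|Q_d\|_{\infty}$ and the value window $\asymp d^4$ mutually consistent when feeding $Q_d=w^{[k_d]}P_d$ into Philippon's criterion. Your estimate at an arbitrary interior point, claimed uniform in $\tau_0$, is an additional unproved assertion you would need to justify; but even granting it, the missing lower bound above is what sinks the proof as written.
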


This means that among the four complex numbers $e^{2\pi i \tau}$, $E_2(\tau)$, $E_4(\tau)$, and $E_6(\tau)$ there are always three of them which are algebraically independent over $\QQ$.

Nesterenko's result is remarkable both in its short and powerful statement as in its proof method. In the next sections, we explain some applications of Nesterenko's theorem and the main ideas of its proof. For other accounts of Nesterenko's proof, the reader may consult, besides the original paper \cite{nesterenko96}, the collective volumes \cite{NP01} and \cite{FGK05}. Here, we shall emphasise the special role played by the dynamics of the algebraic differential equations (\ref{rameq}) in the guise of Nesterenko's `$D$-property'. 

To help the reader with no background in Transcendental Number Theory, we shall start with a brief overview of some of its main concepts and results. Let us point out, however, that some major results such as Baker's theorems or Wüstholz analytic subgroup theorem are not discussed. For more complete and better written introductions to this same subject, we refer to the classic \cite{baker75}, or to the more recent \cite{MR14}.

In the last section, we give a very short introduction to `periods' and discuss some general transcendence questions related to Nesterenko's theorem. Periods are complex numbers given by integrals in algebraic geometry which have recently gained much attention of the Number Theory community due to their deep connections with the theory of motives. This places Nesterenko's theorem in a larger context and allows us not only to better appreciate its content, but also to dream and speculate on future generalisations.

\section{A biased overview of transcendence theory}

Transcendence theory is one of the oldest, and reputedly one of the most difficult, domains of Mathematics. Here, we can only scratch the surface.

\subsection{First notions}

In mathematics, `transcendental' is the antonym of `algebraic'. Accordingly, a complex number $\alpha$ is said to be \emph{transcendental} if it is not algebraic --- that is,  $P(\alpha)\neq 0$ for every $P \in \QQ[X]\setminus\{0\}$. Similarly, we say that a function $f$ (seen either as a formal Laurent series in $\CC(\!(t)\!)$ or as a meromorphic function on some open domain of $\CC$) is transcendental if it is not an algebraic function: $P(t,f(t))\not\equiv 0$ for every $P \in \CC[X,Y]\setminus\{0\}$.

The following definition generalises these notions.

\begin{defi}
  Let $k\subset K$ be a field extension. We say that elements $\alpha_1,\ldots,\alpha_n$ of $K$ are \emph{algebraically independent} over $k$, or that the set $\{\alpha_1,\ldots,\alpha_n\}\subset K$ is algebraically independent over $k$, if
  $$
   P(\alpha_1,\ldots,\alpha_n) \neq 0
   $$
 for every polynomial $P \in k[X_1,\ldots,X_n]\setminus\{0\}$.  When $n=1$, we rather say that $\alpha_1$ is \emph{transcendental} over $k$.
\end{defi}

Some of the most traditional choices of fields $k$ and $K$ are:
\begin{center}
  \begin{tabular}{| c | c | c |}
    \hline
    & $k$ & $K$\\
    \hline
    arithmetic case & $\QQ$ & $\CC$ \\
    \hline
    functional case & $\CC$ & $\CC(\!(t_1,\ldots,t_m)\!)$\\
                              \hline
\end{tabular}
\end{center}
These give origin to the two main branches of transcendence theory: arithmetic transcendence (or transcendental number theory), and functional transcendence. Albeit essentially distinct, these two branches are subtly, and often mysteriously, intertwined. 

\begin{obs}
  One could also replace the field of formal Laurent series $\CC(\!(t_1,\ldots,t_m)\!)$ above by the field of meromorphic functions on some open domain of $\CC^m$. In the one variable case, it is common to consider $\CC(t)$ as the base field $k$. Our framework includes this one, since $f_1,\ldots,f_n \in \CC(\!(t)\!)$ are algebraically independent over $\CC(t)$ if and only if $f_0=t,f_1,\ldots,f_n$ are algebraically independent over $\CC$.
\end{obs}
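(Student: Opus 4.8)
The statement is, in essence, the additivity of transcendence degree for the tower $\CC \subseteq \CC(t) \subseteq \CC(t,f_1,\ldots,f_n)$, and the plan is to spell this out. The only input specific to $\CC(\!(t)\!)$ that is needed is that the element $t$ is transcendental over $\CC$: a relation $a_0 + a_1 t + \cdots + a_N t^N = 0$ with $a_i \in \CC$ forces, upon comparing the coefficients of the powers of the formal variable, $a_0 = \cdots = a_N = 0$. Consequently the subfield $\CC(t) \subseteq \CC(\!(t)\!)$ generated by $t$ is a purely transcendental extension of $\CC$, with $\trdeg_{\CC}\CC(t) = 1$.

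Next I would set $L \defeq \CC(t,f_1,\ldots,f_n) = \CC(t)(f_1,\ldots,f_n) \subseteq \CC(\!(t)\!)$ and apply the additivity of transcendence degree to $\CC \subseteq \CC(t) \subseteq L$, which gives
\[ \trdeg_{\CC} L \;=\; \trdeg_{\CC}\CC(t) \,+\, \trdeg_{\CC(t)} L \;=\; 1 + \trdeg_{\CC(t)}\CC(t)(f_1,\ldots,f_n)\text{.} \]
I would then invoke the standard dictionary between algebraic independence and transcendence degree: a finite family in an extension field of $k$ is algebraically independent over $k$ (in the sense of the definition above) if and only if its members are pairwise distinct and their number equals the transcendence degree over $k$ of the subfield they generate --- equivalently, they form a transcendence basis of that subfield. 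Applying this with $k = \CC$ to the family $t,f_1,\ldots,f_n$, the right-hand condition in the statement is the equality $\trdeg_{\CC} L = n+1$; by the displayed formula this holds if and only if $\trdeg_{\CC(t)}\CC(t)(f_1,\ldots,f_n) = n$, which, applying the dictionary once more with $k = \CC(t)$ to the family $f_1,\ldots,f_n$, is precisely the left-hand condition. On the degenerate locus where $t,f_1,\ldots,f_n$ are not pairwise distinct both conditions fail, so the equivalence persists there as well.

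I do not expect a real obstacle here: once one grants the additivity of transcendence degree and the transcendence of $t$ over $\CC$ --- the only non-bookkeeping input, and itself immediate from the power-series structure --- the equivalence is a matter of unwinding definitions. If one wished to avoid quoting these facts, an equally short hands-on argument is available in both directions. From right to left: a nontrivial polynomial relation over $\CC(t)$ among $f_1,\ldots,f_n$ becomes, after clearing denominators, a nonzero element of $\CC[X_0,X_1,\ldots,X_n]$ vanishing at $(t,f_1,\ldots,f_n)$. Conversely, if $0 \neq P \in \CC[X_0,X_1,\ldots,X_n]$ satisfies $P(t,f_1,\ldots,f_n) = 0$, then $P$ cannot lie in $\CC[X_0]$ (that would contradict the transcendence of $t$ over $\CC$), so it involves some $X_i$ with $i \geq 1$; substituting $X_0 = t$ then yields an element of $\CC(t)[X_1,\ldots,X_n]$ that is nonzero --- again by the transcendence of $t$ --- and that exhibits the algebraic dependence of $f_1,\ldots,f_n$ over $\CC(t)$. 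The same two arguments apply verbatim to the several-variables version mentioned at the beginning of the remark, the relevant fact being that $t_1,\ldots,t_m$ are algebraically independent over $\CC$ inside $\CC(\!(t_1,\ldots,t_m)\!)$.
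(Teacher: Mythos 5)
Your proposal is correct. The paper states this remark without proof, so there is nothing to compare against; both of your routes work --- the additivity of transcendence degree applied to $\CC \subseteq \CC(t) \subseteq \CC(t,f_1,\ldots,f_n)$ together with the dictionary between algebraic independence of $n$ generators and transcendence degree $n$, and the direct argument by clearing denominators in one direction and substituting $X_0 = t$ (using the transcendence of $t$ over $\CC$ to see the resulting polynomial is a nonzero element of $\CC(t)[X_1,\ldots,X_n]$) in the other. The only cosmetic point is that the ``pairwise distinct'' clause in your dictionary is redundant, since the standard equivalence is simply that $\alpha_1,\ldots,\alpha_n$ are algebraically independent over $k$ if and only if $\trdeg_k k(\alpha_1,\ldots,\alpha_n) = n$, which already forces distinctness.
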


Closely related to the notion of algebraic independence is the quantitative notion of transcendence degree of a field extension.

\begin{defi}
Let $k\subset K$ be a field extension. A subset $S$ of $K$ is algebraically independent over $k$ if every finite subset of $S$ is algebraically independent over $k$. The \emph{transcendence degree} of $K$ over $k$, denoted by $\trdeg_kK$, is the maximal cardinality of a subset of $K$ algebraically independent over $k$.
\end{defi}

For instance, let us assume that $K=k(\alpha_1,\ldots,\alpha_n)$. Then $\trdeg_kK \le n$. To assert that $\trdeg_kK\le n-1$ is equivalent to assert that there exists a non-trivial algebraic relation, with coefficients in $k$, between $\alpha_1,\ldots,\alpha_n$. If $1\le r \le n$, to assert that $\trdeg_kK\ge r$ is equivalent to assert that some subset of $r$ elements of $\{\alpha_1,\ldots,\alpha_n\}$ is algebraically independent over $k$.

\begin{ex}[Exponential function]\label{ex:exponentialfunc}
  Consider $t$ as a formal variable and let
  $$
  e^t = \sum_{m=0}^{\infty}\frac{t^m}{m!} \in \CC(\!(t)\!)
  $$
  be the exponential power series. Then $t$ and $e^t$ are algebraically independent over $\CC$. In other words, $e^t$ is transcendental over $\CC(t)$. We prove this by contradiction. If $e^{t}$ were algebraic over $\CC(t)$, then there would exist a minimal integer $d\ge 1$ for which there are polynomials $P_0,\ldots,P_d\in \CC[X]$ satisfying
$$
\sum_{j=0}^d P_j(t)e^{jt} = 0\text{.}
$$
By differentiating with respect to $t$ and by subtracting the resulting equation from $d$ times the original equation, we obtain
$$
\sum_{j=0}^d (P'_{j}(t) +(j-d)P_j(t))e^{jt}= 0\text{,}
$$
so that the leading coefficient is now $P_d'(t)$. By induction, we would obtain polynomials $Q_0,\ldots,Q_{d-1}\in \CC[X]$ (not all zero; check!) such that $\sum_{j=0}^{d-1}Q_j(z)e^{jt}\equiv 0$, thereby contradicting the minimality of $d$.
\end{ex}

In general, the notion of algebraic independence admits the following scheme-theoretic interpretation. Let $k\subset K$ be a field extension, and consider a $K$-point $\alpha = (\alpha_1,\ldots,\alpha_n)\in \AA^n_k(K)$. Then, $\alpha_1,\ldots,\alpha_n$ are algebraically independent over $k$ if and only if the image of
$$
\alpha: \Spec K \to \AA^n_k
$$
is dense in $\AA^n_k$ for the Zariski topology --- that is, if $Y\subset \AA^n_k$ is a closed $k$-subvariety such that $\alpha \in Y(K)$, then $Y=\AA^n_k$.

This point of view allows us to give concrete geometric significance to functional transcendence. For instance, algebraic independence of one variable functions corresponds to Zariski-density of parameterised curves. To fix ideas, let $U$ be a neighbourhood of $0\in \CC$, and let $\varphi_1,\ldots,\varphi_n$ be holomorphic functions on $U$. We thus obtain a holomorphic curve
$$
\varphi=(\varphi_1,\ldots,\varphi_n): U  \to \CC^n = \AA_{\CC}^n(\CC)\text{.}
$$
To say that $\varphi_1,\ldots,\varphi_n$ (seen as formal power series) are algebraically independent over $\CC$ is equivalent to say that the image of the curve $\varphi$ is Zariski-dense in $\AA^n_{\CC}$.

\begin{ex}[Exponential function, revisited]\label{ex:exporev}
  It follows from the above discussion that the transcendence of $e^t$ over $\CC(t)$ is equivalent to the Zariski-density of the image of the holomorphic curve
  \begin{align*}
\varphi: \CC \to \CC^2\text{, }\qquad             z\mapsto (z,e^z)\text{.}
  \end{align*}
  Geometrically, this can be proved as follows. Assume that there exists an irreducible algebraic curve $C\subset \AA^2_{\CC} = \Spec \CC[X,Y]$ containing the image of $\varphi$. Since $e^{2\pi in} = 1$ for every $n \in \ZZ$, $C$ intersects the line $V(Y-1)$ at an \emph{infinite} number of points: $(2\pi i n ,1)\in \CC^2$ for $n\in \ZZ$. As $C$ and $V(Y-1)$ are both irreducible, this is only possible if $C=V(Y-1)$ (by a weaker form of Bézout's theorem). This would imply that $e^z\equiv 1$, which is clearly absurd. 
\end{ex}

\subsection{Arithmetic transcendence and Diophantine approximation}

It is widely acknowledged that functional transcendence is easier than arithmetic transcendence. Indeed, while results concerning functional transcendence date back to the founding fathers of calculus (see \cite{andre15} Footnote 1, p. 2), the first transcendence proof of explicitly defined numbers appears in Liouville's landmark paper \cite{liouville51}.

The fundamental insight of Liouville was to establish a link between arithmetic transcendence and Diophantine approximation.

\begin{theorem}[Liouville]
  If a real number $\alpha$ is algebraic of degree $d>1$ over $\QQ$, then there exists $c=c(\alpha) >0$ such that
  $$
\left| \alpha - \frac{p}{q} \right| > \frac{c}{q^d}\text{.}
$$
for every rational number of the form $p/q$, with $p,q\in \ZZ$ coprime and $q>0$.
\end{theorem}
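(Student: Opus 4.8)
\subsection*{Proof proposal}

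The plan is to pit the archimedean size of $P(p/q)$, where $P$ is a minimal polynomial of $\alpha$, against the elementary fact that $q^d P(p/q)$ is a \emph{nonzero} integer.

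First I would fix an irreducible polynomial $P \in \ZZ[X]$ of degree $d$ with $P(\alpha) = 0$; such a $P$ exists by clearing denominators in the minimal polynomial of $\alpha$ over $\QQ$. Since $d > 1$ and $P$ is irreducible over $\QQ$, it has no rational root, so $P(p/q) \neq 0$ for every rational number $p/q$. Writing $P(X) = a_d X^d + \cdots + a_0$ with $a_j \in \ZZ$, the integer $q^d P(p/q) = a_d p^d + a_{d-1}p^{d-1}q + \cdots + a_0 q^d$ is then a nonzero integer, whence
$$
\left| P\!\left(\tfrac{p}{q}\right) \right| \ge \frac{1}{q^d}\text{.}
$$

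Next I would bound $|P(p/q)|$ from above in terms of $|\alpha - p/q|$. We may assume $|\alpha - p/q| \le 1$, since otherwise the desired inequality is trivial for any $c \le 1$. Then $p/q$ lies in the compact interval $I = [\alpha - 1, \alpha + 1]$, and by the mean value theorem there is $\xi \in I$ with $P(p/q) = P(p/q) - P(\alpha) = (p/q - \alpha)\,P'(\xi)$. Setting $M = \sup_{x \in I}|P'(x)|$, which is finite (continuity on a compact) and positive ($P$ is non-constant), we get $|P(p/q)| \le M\,|\alpha - p/q|$. Combining with the lower bound gives $|\alpha - p/q| \ge 1/(M q^d)$, and taking $c = c(\alpha)$ to be any positive number strictly smaller than both $1$ and $1/M$ yields the strict inequality in all cases.

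There is no real obstacle here: the entire content — and Liouville's insight — is the clash between two incompatible estimates, an analytic upper bound coming from differentiability of $P$ near $\alpha$ and an arithmetic lower bound coming from the fact that a nonzero integer has absolute value at least $1$. The only mildly delicate points are the reduction to the case $|\alpha - p/q| \le 1$ and the bookkeeping needed to pass from $\ge$ to a strict $>$; both are routine.
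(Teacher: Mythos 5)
Your proof is correct and follows essentially the same route as the paper: the nonzero integer $q^d P(p/q)$ gives the arithmetic lower bound $1/q^d$, and differentiability of $P$ near $\alpha$ gives the upper bound $M\,|\alpha - p/q|$. The only cosmetic difference is that you invoke the mean value theorem with $M = \sup_I |P'|$ where the paper uses the Taylor expansion of $P$ at $\alpha$; your handling of the case $|\alpha - p/q| > 1$ and of the strict inequality is the same bookkeeping the paper absorbs into the choice $c = \min\{1,(2M)^{-1}\}$.
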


\begin{proof}
  Let $P \in \ZZ[X]$ be an irreducible polynomial of degree $d$ such that $P(\alpha)=0$, and take $p/q$ as above with $|\alpha-p/q|<1$. By considering the Taylor expansion of $P$ at $\alpha$, we obtain
  $$
|P(p/q)| = | \sum_{i=1}^d\frac{P^{(i)}(\alpha)}{i!} (p/q - \alpha)^i| \le M|p/q-\alpha|
$$
where $M = \sum_{i=1}^n|P^{(i)}(\alpha)/i!|$. Since $P$ is irreducible, we have $P(p/q)\neq 0$; since it is of degree $d$ and has integral coefficients, we have $q^dP(p/q) \in \ZZ\setminus\{0\}$, so that $|P(p/q)|\ge 1/q^d$. We can thus take $c= \min \{1, (2M)^{-1}\}$.
\end{proof}

For instance, the above result gives the transcendence of $\alpha = \sum_{n=0}^{\infty}10^{-n!}$.

\begin{obs}[`The fundamental theorem of transcendence']\label{rmk:ftt}
The seemingly innocuous observation, used in the above proof, that the absolute value of a non-zero integer is at least 1 is at the heart of virtually \emph{every} proof in arithmetic transcendence (cf. \cite{masser16}).
\end{obs}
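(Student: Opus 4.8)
The statement is a heuristic rather than a formal proposition, so by a ``proof'' I mean an argument that the architecture of essentially every known arithmetic transcendence proof factors through the elementary fact that a nonzero rational integer has absolute value at least $1$ (or, over a number field $K$, that the norm $N_{K/\QQ}(\alpha)$ of a nonzero algebraic integer $\alpha$ has absolute value at least $1$). The plan is to isolate the recurring skeleton common to these proofs and to check, step by step, where this integrality input enters.

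First I would describe the generic \emph{three-step scheme}. Assuming the numbers under study satisfy an algebraic relation over $\QQ$: (i) one \emph{constructs an auxiliary object} --- a polynomial, an exponential polynomial, a linear form in logarithms, or a determinant --- with rational integer coefficients, typically by Siegel's box principle, arranged so that it (or one of its derivatives) takes a value $N$ that is a \emph{nonzero} algebraic number; (ii) one proves an \emph{analytic upper bound} $|N|\le\varepsilon$, coming from a vanishing-to-high-order estimate, a maximum modulus or Schwarz-lemma argument, or a Cauchy integral; (iii) one invokes the \emph{arithmetic lower bound} $|N|\ge 1$, valid after clearing denominators so that $N$, or the product $\prod_\sigma\sigma(N)$ of its conjugates, is a nonzero rational integer; choosing the parameters of the construction so that $\varepsilon<1$ then yields a contradiction. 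Step (iii) is exactly the observation in the Remark.

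Then I would instantiate this scheme on the example already at hand and on the landmarks of the theory. In Liouville's theorem, proved above, the auxiliary object is the minimal polynomial $P$ evaluated at $p/q$, and the integrality input is precisely that $q^dP(p/q)\in\ZZ\setminus\{0\}$, hence $|q^dP(p/q)|\ge 1$. For Hermite--Lindemann and Gelfond--Schneider one builds an exponential polynomial vanishing to high order at many points and examines its first nonvanishing Taylor coefficient; for Baker's theorem this is iterated; for Nesterenko's theorem the auxiliary polynomial in $E_2,E_4,E_6$ is fed into an elimination-theoretic (resultant or ideal-membership) argument, but the decisive final inequality is again that a certain nonzero integer is $\ge 1$ in absolute value. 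In each case I would point to the single line where a nonzero integer is bounded below by $1$ and note that deleting it collapses the argument.

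The main obstacle is not the integrality step --- which is trivial --- but everything surrounding it: the genuine mathematics lies in \emph{producing} the auxiliary object with controlled arithmetic size and in \emph{proving} the analytic upper bound sharply enough to beat the trivial lower bound, which is where zero estimates, multiplicity estimates, and (in Nesterenko's case) the $D$-property do the real work. So the honest conclusion of this ``proof'' is that the Remark is correct but cheap: the ubiquity of the ``nonzero integer $\ge 1$'' principle is a structural feature of the subject, whereas the depth of each individual theorem is measured by how much effort is needed to reach a situation where that principle applies.
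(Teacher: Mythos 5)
Your proposal is correct and matches the paper's own (implicit) justification: the remark is informal, and the paper supports it exactly as you do, by pointing to the line $q^dP(p/q)\in\ZZ\setminus\{0\}$ in Liouville's proof and, later, to the integrality of the first non-zero Taylor coefficient in the sketch of Lemma \ref{lemma:lemma2}, where the remark is cited again. Your added framing of the general three-step scheme is accurate and consistent with the paper's treatment.
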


Liouville's theorem can be regarded as a general transcendence criterion. Some other general algebraic independence criteria in terms of Diophantine approximation exist. Nesterenko's proof of his theorem on values of Eisenstein series, to be discussed below, relies on the following particular case of Philippon's sophisticated criteria \cite{philippon86}.

For a polynomial $P \in \CC[X_1,\ldots,X_n]$, we denote by $\|P\|_{\infty}$ the maximum of the absolute values of its coefficients.

\begin{theorem}[Philippon]\label{thm:philippon}
  Let $n\ge 2$ be an integer and $\alpha_1,\ldots,\alpha_n$ be complex numbers. Suppose that there exists an integer $r\ge 2$ and real constants $a>b>0$ such that, for every sufficiently large positive integer $d$, there exists a polynomial $Q_d \in \ZZ[X_1,\ldots,X_n]\setminus\{0\}$ of degree $\deg Q_d = O(d \log d)$ satisfying
  $$
   \log \|Q_d\|_{\infty} =O( d(\log d)^2)
   $$
   and
   $$
  -ad^r \le \log |Q_d(\alpha_1,\ldots,\alpha_n)| \le -bd^r\text{.}
  $$
  Then
  $$
\trdeg_{\QQ} \QQ(\alpha_1,\ldots,\alpha_n)\ge r-1
  $$
\end{theorem}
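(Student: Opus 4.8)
The plan is the one shared by all algebraic independence criteria of this type: argue by contradiction, and confront the ``fundamental theorem of transcendence'' of Remark~\ref{rmk:ftt} --- a nonzero rational integer has absolute value at least $1$ --- with an upper bound distilled from the hypothesised smallness of the $Q_d$. The two can meet only once $\trdeg_{\QQ}\QQ(\alpha_1,\ldots,\alpha_n)\ge r-1$.

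Concretely, I would assume $t\defeq\trdeg_{\QQ}\QQ(\alpha_1,\ldots,\alpha_n)\le r-2$ and seek a contradiction. Homogenise: set $\underline\alpha=(1:\alpha_1:\cdots:\alpha_n)\in\PP^n(\CC)$ with representative $\mathbf a=(1,\alpha_1,\ldots,\alpha_n)\in\CC^{n+1}$, and let $\mathfrak I\subset\ZZ[X_0,\ldots,X_n]$ be the homogeneous prime ideal of forms vanishing at $\underline\alpha$; then $W\defeq V(\mathfrak I)\subset\PP^n$ is irreducible of dimension $t\le r-2$, and both $W$ and a chosen finite generating set of $\mathfrak I$ (the ``algebraic relations among the $\alpha_i$'') have degree and logarithmic height bounded independently of $d$. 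Observe at once that the \emph{lower} bound $-ad^r\le\log|Q_d(\alpha)|$ forces $Q_d(\alpha)\ne 0$, so the homogenisation $\widehat Q_d$ does not belong to $\mathfrak I$: it is a genuine new equation to intersect with.

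The heart of the matter is an elimination procedure. For each large $d$ one builds a chain of homogeneous ideals of increasing codimension, formed from $\widehat Q_d$ together with the fixed relations and a handful of auxiliary $\QQ$-rational linear forms, retaining at each stage the isolated primary component whose variety lies \emph{closest} to $\underline\alpha$ --- closeness being measured through Chow forms evaluated at copies of $\mathbf a$, in the spirit of Nesterenko's distance function \cite{nesterenko96} --- and one reads off at the end a nonzero rational integer $N_d$, typically a resultant. Two ingredients drive this. First, an \emph{arithmetic Bézout theorem} with explicit bounds: an irreducible component of $Z\cap V(F)$ has degree $\lesssim\deg Z\cdot\deg F$ and logarithmic height $\lesssim\deg Z\cdot\log\|F\|_{\infty}+\deg F\cdot h(Z)+(\text{lower order})$, so that along the chain the degrees remain $O\big((d\log d)^{O(1)}\big)$ and the heights $O\big(d^{O(1)}(\log d)^{O(1)}\big)$. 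Secondly, a \emph{transfer of smallness}: because $\log|Q_d(\alpha)|\le -bd^{r}$, each use of $\widehat Q_d$ pulls the retained component markedly closer to $\underline\alpha$. Balancing the two --- here the polylogarithmic room in $\deg Q_d=O(d\log d)$ and $\log\|Q_d\|_{\infty}=O\big(d(\log d)^2\big)$ and the hypothesis $t\le r-2$ are all consumed --- one arrives at an upper bound for $\log|N_d|$ that is strictly negative for $d$ large, contradicting $\log|N_d|\ge 0$ from the fundamental theorem of transcendence. Hence $t\ge r-1$.

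The whole difficulty lies in the previous paragraph. Constructing the distance function with its functorial properties, proving the explicit arithmetic Bézout inequality --- which itself rests on a robust theory of heights of polynomial ideals --- and verifying that the ``closest component'' can always be selected so that the degree, height and distance estimates propagate \emph{simultaneously}: this is the substance of Philippon's work \cite{philippon86}, and I expect it, rather than the closing numerical confrontation, to be the main obstacle. A subsidiary but still delicate point is the chain of non-degeneracy checks --- that each chosen cut genuinely drops the dimension by one, that no intermediate component is already forced onto $W$ (equivalently, contains $\underline\alpha$), and that the terminal ideal is honestly zero-dimensional with $\underline\alpha\notin V(N_d)$, so that $N_d$ is indeed a nonzero integer.
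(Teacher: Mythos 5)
A contextual point first: the paper gives no proof of this statement at all --- it is quoted as a special case of Philippon's criteria and used as a black box, with \cite{philippon86} as the reference --- so there is no in-paper argument to compare yours against. Judged on its own terms, your proposal correctly identifies the architecture of Philippon's proof (homogenise, measure proximity to $(1:\alpha_1:\cdots:\alpha_n)$ via Chow forms, descend through an elimination chain governed by an arithmetic B\'ezout inequality for degrees and heights, and close by confronting a nonzero rational integer with the bound $|N_d|\ge 1$), but as a proof it has a genuine gap: every quantitatively decisive ingredient is asserted rather than established. The construction of the distance function and its behaviour under intersection, the height theory behind the B\'ezout-type inequality, and above all the bookkeeping that converts the specific data $\deg Q_d=O(d\log d)$, $\log\|Q_d\|_{\infty}=O(d(\log d)^2)$, $-ad^r\le \log|Q_d(\alpha)|\le -bd^r$ together with the assumption $\trdeg\le r-2$ into a strictly negative upper bound for $\log|N_d|$ are exactly where the theorem is decided. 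Your ``balancing the two'' cannot be waved through: note for instance that $(d\log d)^{r-1}\cdot d(\log d)^2$ already exceeds $d^r$, so loose bounds of the shape ``degrees $O((d\log d)^{O(1)})$, heights $O(d^{O(1)}(\log d)^{O(1)})$'' do not by themselves produce the sharp threshold $r-1$; the contradiction rests on a finer interplay between the decay of the distance along the chain and the constants $a>b$.

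A second, more local, inaccuracy: you invoke the lower bound $-ad^r\le\log|Q_d(\alpha)|$ only to conclude $Q_d(\alpha)\neq 0$, which is vacuous (finiteness of the logarithm already says so). In Philippon's criterion the lower bound, together with the requirement $a>b$, does real work: it bounds from below the distance from the point to the hypersurface $V(Q_d)$, which is what prevents the retained components from degenerating onto the point and controls the elimination when no relation between consecutive $Q_d$ is assumed (compare the exposition in \cite{NP01}). So your outline is a faithful roadmap of \cite{philippon86}, and its honesty about deferring the substance is commendable, but it remains a citation in disguise rather than a proof --- which, to be fair, is also how the paper itself treats the statement.
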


Geometrically, we may interpret the hypotheses of the above result as an approximation condition of $\alpha$ by hypersurfaces in $\AA^n_{\QQ}$ in terms of their degree and their `arithmetic complexity'.

\subsection{Schneider-Lang and Siegel-Shidlovsky}

The connection between transcendence and Diophantine approximation suggests a closer inspection on values of analytic functions. Indeed, properties of such functions such as growth conditions, and differential or functional equations, can provide additional tools to the study of the approximation properties of their values.

Historically, this vague idea culminated in two precise and general theorems, those of Schneider-Lang and Siegel-Shidlovsky, which deal with entire or meromorphic functions over $\CC$ satisfying some algebraic differential equation, a growth condition, a functional transcendence statement, and some hypotheses of arithmetic nature.

We next state both theorems and derive some consequences, without saying anything about their proofs. The idea here is simply to help the reader to put Nesterenko's theorem in perspective (see Section \ref{sec:problems} below).

\subsubsection{Schneider-Lang}

Given a real number $\rho>0$, we say that the \emph{order} of an entire function $f$ on $\CC$ is $\le \rho$ if there exist real numbers $a,b>0$ such that
$$
|f(z)| \le ae^{b|z|^{\rho}}
$$
for every $z \in \CC$. A meromorphic function on $\CC$ is of order $\le \rho$ if it can be written as a quotient of two entire functions of order $\le \rho$.

\begin{theorem}[Schneider-Lang, cf. \cite{waldschmidt74} Thm. 3.3.1]
  Let $\rho_1,\rho_2 >0$ be real numbers, $K\subset \CC$ be a number field, $n\ge 2$ be an integer, and $f_1,\ldots,f_n$ be meromorphic functions on $\CC$ such that the ring $K[f_1,\ldots,f_n]$ is stable under the derivation $\frac{d}{dz}$. Let us further assume that:
  \begin{enumerate}
    \item $f_1$ and $f_2$ are algebraically independent over $K$;
    \item $f_i$ is of order $\le \rho_i$, for $i=1,2$. 
    \end{enumerate}
    Then, if $S$ denotes the set of $\alpha \in \CC$ such that, for every $1\le j \le n$, $\alpha$ is not a pole of $f_j$ and $f_j(\alpha)\in K$, we have:
    $$
    \text{card}(S) \le (\rho_1+\rho_2)[K:\QQ]\text{.}
    $$
  \end{theorem}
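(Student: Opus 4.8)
To prove this, the plan is to run the classical \emph{auxiliary function} method of Siegel, Gelfond and Schneider. Write $\kappa=[K:\QQ]$, and suppose for contradiction that $S$ contains $m$ points $\alpha_1,\dots,\alpha_m$, where $m$ is an integer with $m>(\rho_1+\rho_2)\kappa$; deriving a contradiction forces $\mathrm{card}(S)\le(\rho_1+\rho_2)\kappa$. The idea is to build a nonzero function out of $f_1,\dots,f_n$ vanishing to high order at every $\alpha_i$, and then to estimate one of its derivative-values at some $\alpha_i$ in two incompatible ways: a Liouville-type lower bound coming from Remark~\ref{rmk:ftt}, and an analytic upper bound coming from the growth hypotheses.

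\emph{Construction of the auxiliary function.} For integer parameters $T,L_1,L_2\to\infty$ with $L_1L_2$ slightly larger than $mT\kappa$, one seeks a nonzero polynomial $P=\sum_{0\le i<L_1,\,0\le j<L_2}p_{ij}X^iY^j$ with coefficients $p_{ij}$ in the ring of integers $\mathcal{O}_K$ such that $F\defeq P(f_1,f_2)$ satisfies $F^{(t)}(\alpha_i)=0$ for all $0\le t<T$ and $1\le i\le m$. This is a homogeneous $\QQ$-linear system in the $L_1L_2\kappa$ coordinates of the $p_{ij}$, with $mT\kappa$ equations, so it has a nonzero solution as soon as $L_1L_2>mT$. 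Here the hypothesis that $K[f_1,\dots,f_n]$ is stable under $\tfrac{d}{dz}$ is crucial: it guarantees that each $F^{(t)}$ is again a polynomial in $f_1,\dots,f_n$ over $K$, of controlled degree and size, so that the coefficients of the linear system lie in $K$ and are bounded explicitly --- this is exactly the role of the extra functions $f_3,\dots,f_n$. A form of Siegel's lemma over $K$ then produces such a $P$ with $\log\|P\|_\infty$ bounded polynomially in $T,L_1,L_2$, and the same estimate bounds $F^{(t)}$ for every $t$.

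\emph{The two estimates.} Since $f_1,f_2$ are algebraically independent over $K$ and $P\neq0$, the function $F$ is not identically zero, so its zeros are isolated. Let $t_0\ge T$ be least such that $F^{(t_0)}$ fails to vanish at some point of $S$, and pick $\alpha_0\in S$ with $\gamma\defeq F^{(t_0)}(\alpha_0)\neq0$; then $F$ still vanishes to order $\ge t_0$ at every point of $S$, and $\gamma\in K$ because $F^{(t_0)}$ is a $K$-polynomial in the values $f_j(\alpha_0)\in K$. On the arithmetic side, clearing denominators turns $\gamma$ into a nonzero algebraic integer whose conjugates are bounded in terms of $\|P\|_\infty$, the $f_j(\alpha_0)$ and $t_0$; applying over the $\kappa$ embeddings $K\hookrightarrow\CC$ the principle that a nonzero rational integer has absolute value $\ge1$ (Remark~\ref{rmk:ftt}) gives, for some constant $c_1>0$,
\[
\log|\gamma|\ \ge\ -c_1\,\kappa\bigl(t_0\log t_0+\log\|P\|_\infty+L_1+L_2\bigr).
\]
On the analytic side, write $f_i=g_i/h_i$ with $g_i,h_i$ entire of order $\le\rho_i$ and no common zeros, so that $\widetilde F\defeq h_1^{L_1}h_2^{L_2}F$ is entire; for a large radius $R$ the function $\widetilde F(z)\big/\prod_{i=1}^m(z-\alpha_i)^{t_0}$ is holomorphic on $|z|\le R$, so the maximum modulus principle bounds its value at $\alpha_0$ by its maximum on $|z|=R$. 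The growth hypotheses give $\log\max_{|z|=R}|\widetilde F|\le c_2(L_1R^{\rho_1}+L_2R^{\rho_2})+O(\log\|P\|_\infty+L_1+L_2)$ for some constant $c_2>0$, the denominator contributes a factor $(R/2)^{-mt_0}$, and the value at $\alpha_0$ recovers $|\gamma|$ up to $t_0!$ and a bounded constant to the power $t_0$; altogether
\[
\log|\gamma|\ \le\ c_2\bigl(L_1R^{\rho_1}+L_2R^{\rho_2}\bigr)-mt_0\log(R/2)+O\bigl(t_0\log t_0+\log\|P\|_\infty+L_1+L_2\bigr).
\]

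\emph{Conclusion and the main difficulty.} Comparing the two bounds and choosing the parameters carefully --- in particular taking $L_1/L_2$ of order $R^{\rho_2-\rho_1}$ so that the two growth terms balance, and letting $R$ grow with $T$ --- the estimates become contradictory precisely when $m>(\rho_1+\rho_2)\kappa$; in practice this is done by an extrapolation loop, upgrading ``$F$ vanishes to order $\ge T$ on $S$'' to order $\ge T+1$, then $\ge T+2$, and so on, which forces $F\equiv0$, the desired contradiction. The main obstacle is this final optimization: extracting the sharp constant $\rho_1+\rho_2$, rather than $2\max(\rho_1,\rho_2)$ or something weaker, requires the asymmetric choice of $L_1,L_2$ and the iteration to be carried out with precision, in tandem with a careful accounting of how heights and denominators grow under the $t_0$-fold differentiation of $F$. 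The reduction from meromorphic to entire data is routine but also uses the order bounds on $f_1,f_2$.
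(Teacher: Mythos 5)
The paper itself gives no proof of the Schneider--Lang theorem: it is stated with a reference to Waldschmidt's \emph{Nombres transcendants}, Thm.~3.3.1, and the text explicitly says it derives only consequences. So your proposal can only be measured against that classical argument, and your sketch does follow it faithfully: an auxiliary polynomial $P(f_1,f_2)$ over $\mathcal{O}_K$ produced by Siegel's lemma, stability of $K[f_1,\dots,f_n]$ under $\frac{d}{dz}$ to keep all derivatives (and their heights) inside $K[f_1,\dots,f_n]$, extrapolation to a first non-vanishing derivative $\gamma=F^{(t_0)}(\alpha_0)\in K\setminus\{0\}$, and the comparison of a Liouville-type lower bound with a Schwarz/maximum-modulus upper bound after clearing denominators by $h_1^{L_1}h_2^{L_2}$. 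The structure, and the role of each hypothesis, are correctly identified.

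What is missing, however, is not a cosmetic optimization but the part of the argument that produces the stated constant, and two of your explicit choices would not close the proof as written. First, $t_0$ is the least order at which extrapolation fails and is not bounded in terms of $T$; if the radius $R$ is a function of $T$ alone (``letting $R$ grow with $T$''), the decisive term $-mt_0\log(R/2)$ has size only $t_0\log T$ and is absorbed by the $t_0\log t_0$ terms whenever $t_0\gg T$, so no contradiction results. One must choose $R$ of size roughly $t_0^{1/(\rho_1+\rho_2)}$ \emph{after} $t_0$ is known (equivalently, run the extrapolation with a radius tied to the current vanishing order). Second, the sharp bound $(\rho_1+\rho_2)[K:\QQ]$ comes from an exact bookkeeping that your displayed inequalities discard: the arithmetic estimate should read $\log|\gamma|\ge-(\kappa-1)\log(\text{house of }\gamma)-\kappa\log(\text{denominator of }\gamma)$, where only the house carries the factor $t_0!$, so the coefficient of $t_0\log t_0$ is $\kappa-1$ there and exactly $1$ on the analytic side (from the $t_0!$ in the Schwarz argument); writing $-c_1\kappa\bigl(t_0\log t_0+\cdots\bigr)$ against an unspecified $+O(t_0\log t_0)$ can at best yield $\mathrm{card}(S)=O\bigl((\rho_1+\rho_2)\kappa\bigr)$. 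Relatedly, with $L_1L_2$ only ``slightly larger'' than $mT$, Siegel's lemma gives $\log\|P\|_\infty$ of order $T\log T$ (the coefficients of the linear system are $t$-th derivative values, of house about $t!\,C^{t}$), which is of the same order as the main terms; one needs a strongly underdetermined system, or divided derivatives, to render it negligible. These are precisely the points where the cited proof spends its effort, so as it stands your write-up is a correct road map with the decisive estimates left unproved.
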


  In essence, this theorem simply asserts that $f_1,\ldots,f_n$ can only take too many simultaneous algebraic values if there is an algebraic relation between them.

\begin{obs}\label{rmk:genSL}
The Schneider-Lang criterion also admits geometric generalisations replacing differential equations by algebraic foliations. See \cite{herblot} and \cite{gasbarri10}.
\end{obs}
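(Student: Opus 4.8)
The plan is to formulate and prove a Schneider–Lang-type theorem in which the hypothesis ``$K[f_1,\dots,f_n]$ is stable under $\frac{d}{dz}$'' is replaced by the following geometric data: a smooth quasi-projective variety $X$ over a number field $K$ equipped with an algebraic foliation $\mathcal F$ (an involutive subsheaf $T_{\mathcal F}\subset T_X$), an ample line bundle $L$ on a projective model $\overline X\supset X$, and a holomorphic map $\varphi\colon \DD_R\to X(\CC)$ from a disk (or from $\CC$) which is a \emph{leaf} of $\mathcal F$, whose image is Zariski-dense in $X$, and which satisfies a finite-order growth condition controlling how fast $\varphi$ and the relevant algebraic jets blow up near the boundary. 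Under these hypotheses the conclusion is that the set $S$ of $\alpha\in\DD_R$ with $\varphi(\alpha)\in X(K)$ is finite, with a bound of the form $\mathrm{card}(S)\le C(\rho,X,L,\mathcal F)\,[K:\QQ]$. The classical statement is recovered by taking $X=\AA^n$, $L=\mathcal O(1)$, $\mathcal F$ generated by the vector field $\sum_i f_i'(z)\,\partial_{x_i}$, and $\varphi(z)=(f_1(z),\dots,f_n(z))$.

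The proof follows the Schneider–Lang blueprint, rephrased in terms of sections of line bundles and jets along $\mathcal F$. \emph{Step 1 (auxiliary section).} By a Siegel-lemma/Minkowski argument applied to the $\mathcal O_K$-lattice $H^0(\overline X,L^{\otimes D})$, one constructs a nonzero section $s$ of controlled height vanishing to high order along the leaf at all the points $\varphi(\alpha)$, $\alpha\in S$; the key point --- here the foliation plays the role of $\frac{d}{dz}$ --- is that ``vanishing to order $\le\tau$ along $\mathcal F$'' is an \emph{algebraic} condition, because $T_{\mathcal F}$ induces an algebraically defined jet filtration, so the Taylor coefficients of $F\defeq s\circ\varphi$ at an algebraic point $\varphi(\alpha)\in X(K)$ are algebraic numbers of bounded height. \emph{Step 2 (non-vanishing on the leaf).} Since $s\neq 0$ and $\varphi$ has Zariski-dense image, $F\not\equiv 0$; let $\tau$ be its least order of vanishing, attained at some $\alpha_0\in S$. \emph{Step 3 (arithmetic lower bound).} Up to normalisation, $F^{(\tau)}(\alpha_0)$ is the value at $\varphi(\alpha_0)$ of an algebraic jet of $s$ along $\mathcal F$, hence a nonzero element of $K$; a Liouville-type inequality (the fundamental theorem of transcendence, Remark \ref{rmk:ftt}) together with the height and growth bounds gives $\log|F^{(\tau)}(\alpha_0)|\ge -c_1(D\log D+D\tau)$. \emph{Step 4 (analytic upper bound and conclusion).} By the Schwarz lemma on $\DD_R$, the fact that $F$ vanishes to order $\tau$ at each of the $\mathrm{card}(S)$ points of $S$ forces $|F^{(\tau)}(\alpha_0)|$ to be minuscule, roughly $q^{(\mathrm{card}(S)-1)\tau}\sup_{|z|=R'}|F|$ with $q<1$, while the order condition bounds $\sup|F|$ by $\exp(c_2 DR'^{\rho})$; comparing this with Step 3 and letting $D\to\infty$ (so $\tau\to\infty$) yields the asserted bound on $\mathrm{card}(S)$.

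The main obstacle --- and the substance of \cite{herblot} and \cite{gasbarri10} --- is Step 1: one must construct over $\mathcal O_K$ (or $\mathcal O_K[1/N]$ for suitable $N$) the jet sheaves $P^k_{\mathcal F}(L^{\otimes D})$ attached to the foliation, control their integral structure and heights, and single out a growth condition on the leaf $\varphi$ strong enough to feed the Schwarz lemma yet natural enough to hold in examples --- this is the role of Gasbarri's charged foliations and of Herblot's Nevanlinna-theoretic order for leaves. A secondary subtlety is that the algebraic points $\varphi(\alpha)$ must all lie on a single leaf (or one passes to finitely many leaves), and that when $X$ is only quasi-projective the behaviour of $s$, $\mathcal F$ and $\varphi$ along $\overline X\setminus X$ must be controlled --- handling these ``poles'' is what, in this picture, replaces the meromorphy-of-finite-order hypothesis of the classical theorem.
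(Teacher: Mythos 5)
This statement is a remark, not a theorem: the paper gives no proof of it at all, only a pointer to \cite{herblot} and \cite{gasbarri10}, so there is no internal argument against which to match your proposal. Judged on its own terms, your sketch is a faithful reconstruction of the strategy behind those references: replace the derivation-stable ring $K[f_1,\dots,f_n]$ by a foliated quasi-projective variety $(X,\mathcal F)$ over $K$, view the functions as a Zariski-dense leaf $\varphi$, and run the classical four-step Schneider--Lang scheme (auxiliary section of $L^{\otimes D}$ of controlled height vanishing to high order along $\mathcal F$ at the points of $S$; non-vanishing of $s\circ\varphi$ from Zariski-density; a Liouville lower bound on the first non-zero jet, which lies in $K$ because the jet filtration along $T_{\mathcal F}$ is algebraic over $K$; a Schwarz-lemma upper bound fed by the growth hypothesis). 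You also correctly identify where the real work lies: integral models of the foliated jet sheaves, heights, and a usable growth condition on the leaf.

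Two points of comparison with the cited works are worth recording. First, Gasbarri does not run a bare Siegel-lemma construction on $H^0(\overline X,L^{\otimes D})$ but works in Bost's Arakelov-theoretic slope formalism (hermitian vector bundles, evaluation maps on jets, slope inequalities), which is what lets him handle points of bounded degree and more delicate growth ("A-analytic" or charged) conditions; Herblot's setting is meromorphic maps from affine curves or disks with a Nevanlinna-theoretic order, and his conclusions bound the set of algebraic points in terms of that order and the degrees of the points rather than by the clean $(\rho_1+\rho_2)[K:\QQ]$ shape you state. So your claimed conclusion $\mathrm{card}(S)\le C(\rho,X,L,\mathcal F)[K:\QQ]$ should be read as a schematic placeholder rather than the actual statements proved there. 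Second, in Step 2 you should make explicit that, as in the classical proof, the auxiliary section is forced to vanish to a prescribed large order $T$ at \emph{every} point of $S$, and $\tau\ge T$ is then the least order at which some point fails to vanish further; and the extrapolation in Step 4 needs the points of $S$ to lie in a fixed compact part of the leaf (or a uniform control of $\varphi$ near the boundary), which is exactly the role of the finite-order/meromorphy hypothesis you relegate to the last paragraph. With these caveats your outline is correct in spirit, but it is a plan for reproving the results of the cited papers, not something the present paper proves or needs to prove.
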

  
  As a first corollary, we can recover the following classical result which brings together the pioneering works of Hermite and Lindemann on the transcendence of $e$ and $\pi$.

\begin{coro}[Hermite-Lindemann]\label{coro-HL}
  For every $z \in \CC\setminus\{0\}$, we have
  $$
  \trdeg_{\QQ}\QQ(z,e^{z})\ge 1\text{.}
  $$
\end{coro}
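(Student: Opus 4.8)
The plan is to deduce this from the Schneider--Lang criterion by showing that, were both $z$ and $e^z$ algebraic, a suitable pair of functions would take algebraic values at infinitely many points. Suppose for contradiction that $z$ and $e^z$ are both algebraic over $\QQ$, and put $K = \QQ(z,e^z)$; being a finitely generated algebraic extension of $\QQ$, it is a number field. We then apply the Schneider--Lang theorem with $n = 2$, $\rho_1 = \rho_2 = 1$, and the entire functions $f_1(w) = w$, $f_2(w) = e^w$ of a complex variable $w$ (the variable of the theorem simply renamed). The ring $K[f_1,f_2] = K[w,e^w]$ is stable under $d/dw$ since $\frac{d}{dw}e^w = e^w$; the functions $f_1$ and $f_2$ are algebraically independent over $\CC$ by Example~\ref{ex:exponentialfunc}, hence a fortiori over the subfield $K$; and both have order $\le 1$, trivially for the polynomial $w$ and because $|e^w| \le e^{|w|}$ for $e^w$.

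With these data, consider the set $S$ of $\alpha \in \CC$ which are not poles of $f_1$ or $f_2$ (here all of $\CC$, as both functions are entire) and satisfy $f_1(\alpha), f_2(\alpha) \in K$. For every $m \in \ZZ$ we have $f_1(mz) = mz \in K$ and $f_2(mz) = e^{mz} = (e^z)^m \in K$, so $S$ contains all the points $mz$ with $m \in \ZZ$. Since $z \ne 0$, these are pairwise distinct, so $S$ is infinite. But Schneider--Lang gives $\mathrm{card}(S) \le (\rho_1 + \rho_2)[K:\QQ] = 2[K:\QQ] < \infty$, a contradiction. Hence $z$ and $e^z$ are not both algebraic, which is precisely the assertion $\trdeg_{\QQ}\QQ(z,e^z) \ge 1$.

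I do not expect any genuine difficulty here: the substance of the argument is packaged inside the Schneider--Lang theorem, which is invoked as a black box. The only point requiring a little thought is the construction of the test points, where one uses the functional equation $e^{w_1 + w_2} = e^{w_1}e^{w_2}$ of the exponential to manufacture, from the single relation $e^z \in K$, the infinite family $\{mz\}_{m \in \ZZ}$ of points at which both functions are $K$-valued --- exactly what the finiteness bound on $S$ forbids. (One should also keep in mind the harmless requirement that these points avoid the poles of $f_1$ and $f_2$, which is automatic since both are entire.)
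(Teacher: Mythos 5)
Your proposal is correct and follows exactly the paper's own argument: apply Schneider--Lang to $f_1(w)=w$, $f_2(w)=e^w$ (algebraically independent by Example~\ref{ex:exponentialfunc}, both of order $\le 1$, with $K[f_1,f_2]$ stable under $d/dw$), and derive a contradiction from the infinite set $\{mz \mid m \in \ZZ\}$ of points where both functions take values in $K=\QQ(z,e^z)$. The only difference is that you spell out a few routine verifications (distinctness of the points $mz$, absence of poles) that the paper leaves implicit.
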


\begin{proof}
Let $f_1(z)=z$ and $f_2(z)=e^z$. Clearly, both $f_1$ and $f_2$ are of order $\le 1$ and the ring $\QQ[f_1,f_2]$ is stable under $\frac{d}{dz}$. We have already seen in Example \ref{ex:exponentialfunc} that $f_1$ and $f_2$ are algebraically independent over $\CC$. By absurd, suppose that there exists $\alpha \in \CC\setminus\{0\}$ such that both $\alpha$ and $e^{\alpha}$ are algebraic, and set $K=\QQ(\alpha,e^{\alpha})$. Then $S$ would contain the infinite set $\{n\alpha \mid n \in \ZZ\}$, thereby contradicting Schneider-Lang's theorem. 
\end{proof}

Taking $z=2\pi i$, we obtain the transcendence of $\pi$; taking $z=1$, we obtain the transcendence of $e$.

In the same spirit, we can prove Schneider's theorem characterising `bi-algebraic' points for the $j$-invariant, seen as a holomorphic function on the Poincaré upper half-plane $\mathbf{H} = \{\tau \in \CC \mid \Im \tau >0\}$.

\begin{coro}[Schneider]\label{coro:schneider}
  If $\tau \in \mathbf{H}$ is not quadratic imaginary, then
  $$
\trdeg_{\QQ} \QQ(\tau,j(\tau))\ge 1\text{.}
  $$
\end{coro}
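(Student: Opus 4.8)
The plan is to mimic the proof of the Hermite--Lindemann corollary (Corollary~\ref{coro-HL}), applying the Schneider--Lang theorem to a suitable pair of functions built from the $j$-invariant and a uniformising parameter, together with the theory of complex multiplication to generate the required infinite set of algebraic points. The difficulty is that $j$ is a function on $\mathbf{H}$, not on $\CC$, so it is not directly an entire or meromorphic function of finite order on $\CC$; we must repackage the situation so that Schneider--Lang applies.

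First I would fix $\tau_0 \in \mathbf{H}$ that is \emph{not} quadratic imaginary and suppose, for contradiction, that both $\tau_0$ and $j(\tau_0)$ are algebraic; set $K$ to be a number field containing them (and, after the next step, whatever extra algebraic quantities arise). The standard device is to transfer to the multiplicative uniformisation: write $q = e^{2\pi i \tau}$, so that $j(\tau) = 1/q + 744 + \cdots$ is given by a $q$-expansion, but this is still not of finite order near $q=0$. The cleaner route, which I would follow, is to use the elliptic functions attached to a fixed lattice. Namely, given $\tau \in \mathbf{H}$, consider the Weierstrass $\wp$-function and the quantities $g_2(\tau), g_3(\tau)$; then $j(\tau)$ is a rational function of $g_2(\tau)^3/\Delta(\tau)$. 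So instead of working with $j$ as a function of $\tau$, I would work on $\CC$ with the variable $z$ and the functions $z \mapsto \wp(z; \omega_1, \omega_2)$ and $z \mapsto \wp'(z;\omega_1,\omega_2)$ for a fixed period lattice $\ZZ\omega_1 + \ZZ\omega_2$ with $\tau_0 = \omega_2/\omega_1$; these are genuinely meromorphic of order $\le 2$ on $\CC$, and they satisfy the algebraic differential equation $\wp'^2 = 4\wp^3 - g_2\wp - g_3$, so the ring generated by $z$, $\wp(z)$, $\wp'(z)$ over $K$ is stable under $d/dz$ provided $g_2, g_3 \in K$, which we may arrange up to replacing $\omega_1$ by a scalar multiple (since $j(\tau_0) \in K$ means the curve $y^2 = 4x^3 - g_2 x - g_3$ is defined over $\bar\QQ$ for a suitable choice of Weierstrass model).

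The heart of the matter is producing infinitely many $\alpha \in \CC$ at which all the chosen functions take values in $K$. Here is where the hypothesis ``$\tau_0$ not quadratic imaginary'' enters, via a crucial dichotomy: if $\tau_0$ were quadratic imaginary, the lattice would have complex multiplication and one could use endomorphisms to manufacture algebraic points (as in Schneider's theorem, and indeed this is why $j$ of a CM point \emph{is} algebraic), but since we are assuming $\tau_0$ is \emph{not} quadratic imaginary, the endomorphism ring of $\CC/(\ZZ\omega_1+\ZZ\omega_2)$ is just $\ZZ$, and the multiplication-by-$n$ maps give us the points $\alpha_n = $ (a fixed non-torsion division value) scaled, or more simply the lattice-point translates: the functions $\wp, \wp'$ take algebraic values at all the points $n z_0$ for a fixed $z_0$ with $\wp(z_0), \wp'(z_0)$ algebraic, because the addition law and duplication formula are algebraic over $K$. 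This produces the infinite set $S$, contradicting the bound $\operatorname{card}(S) \le (\rho_1 + \rho_2)[K:\QQ]$ from Schneider--Lang — provided the relevant pair of functions is algebraically independent over $K$.

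The main obstacle, then, is exactly the functional-transcendence input: one must check that among the functions in play there are two that are algebraically independent over $K$ (equivalently $\CC$). One takes, say, $f_1(z) = \wp(z)$ and $f_2(z) = z$; the algebraic independence of $\wp(z)$ and $z$ over $\CC$ is classical (the $\wp$-function is not an algebraic function of $z$, since it has infinitely many poles, one at each lattice point, so any algebraic curve through its graph would meet a horizontal line infinitely often — the same Bézout-style argument as in Example~\ref{ex:exporev}). With that in hand, Schneider--Lang yields a finite bound on $\operatorname{card}(S)$, while complex multiplication theory (or rather its \emph{absence}) together with the algebraic addition law forces $S$ to be infinite, and this contradiction proves $\trdeg_\QQ \QQ(\tau_0, j(\tau_0)) \ge 1$. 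I would expect the bookkeeping around choosing the Weierstrass model so that $g_2, g_3$ land in a number field, and relating $j(\tau_0) \in \bar\QQ$ to that choice, to be the fiddliest part to write out carefully, but it is routine; the conceptual core is the interplay between the finiteness from analytic growth (Schneider--Lang) and the infinitude of algebraic points coming from the group law, gated by the CM dichotomy.
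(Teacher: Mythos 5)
There is a genuine gap, and it sits exactly where you located the ``heart of the matter'': the construction of the infinite set $S$. In your setup the list of functions must contain $f_2(z)=z$ (you need it, since $\wp$ and $\wp'$ are algebraically \emph{dependent} via the Weierstrass equation, so they alone cannot satisfy hypothesis (1) of Schneider--Lang). But then every point of $S$ must have its coordinate $z$ in $K$, and the points you propose, $nz_0$ with $z_0$ a point where $\wp(z_0),\wp'(z_0)$ are algebraic (e.g.\ a half-period), are not known to be algebraic numbers --- indeed $\omega_1/2$ is transcendental --- so they do not lie in $S$ at all. Conversely, at algebraic values of $z$ you have no way to show $\wp(z)$ is algebraic. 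A clear symptom that something is off: your argument never actually uses the assumption that $\tau_0$ itself is algebraic, only that $j(\tau_0)$ is; if it worked it would prove $j(\tau)$ transcendental for \emph{every} non-CM $\tau$, which is false since $j$ is surjective onto $\CC$. Relatedly, the ``CM dichotomy'' is invoked backwards: multiplication by $n$ and the algebraic addition law exist for every elliptic curve, CM or not, so the absence of CM is not what produces points of $S$; in the correct argument it is the hypothesis that gets \emph{contradicted} at the end.

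The paper's proof repairs exactly this by bringing $\tau$ into the functions rather than into the points: it applies Schneider--Lang to
$(f_1,f_2,f_3,f_4)=(\wp_{\Lambda}(z),\wp_{\Lambda}(\tau z),\wp_{\Lambda}'(z),\wp_{\Lambda}'(\tau z))$,
where $\Lambda=\ZZ\omega_1+\ZZ\omega_2$, $\tau=\omega_2/\omega_1$ and $g_2(\Lambda),g_3(\Lambda)\in\overline{\QQ}$. The algebraicity of $\tau$ is used precisely to make $K[f_1,\ldots,f_4]$ stable under $d/dz$ (differentiating $\wp_{\Lambda}(\tau z)$ produces a factor $\tau$), and the coordinate function $z$ is \emph{not} in the list, so one may take the infinite set of points $(n+\tfrac12)\omega_1$, $n\in\ZZ$: there $f_1,f_3$ take $2$-torsion values, and since $\tau\cdot(n+\tfrac12)\omega_1=(n+\tfrac12)\omega_2$ is again $2$-torsion modulo $\Lambda$, so do $f_2,f_4$; all these values lie in a fixed number field $K$. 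Schneider--Lang then forces $\wp_{\Lambda}(z)$ and $\wp_{\Lambda}(\tau z)$ to be algebraically dependent over $K$, which implies that some $m\omega_2$ is a period of $\wp_{\Lambda}(\tau z)$, i.e.\ $m\omega_2=a\,\omega_1/\tau+b\,\omega_2/\tau$ with $a,b\in\ZZ$, whence $m\tau^2-b\tau-a=0$ and $\tau$ is quadratic imaginary --- the desired contradiction. So the missing idea in your proposal is the auxiliary function $\wp_{\Lambda}(\tau z)$ (a functional incarnation of the hypothesis $\tau\in\overline{\QQ}$), without which neither the infinitude of $S$ nor the use of the non-CM hypothesis can be made to work.
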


Note that, if $\tau \in \mathbf{H}$ is quadratic imaginary, i.e., $\QQ(\tau)$ is an imaginary algebraic extension of $\QQ$ of degree 2, then it follows from the classical theory of complex multiplication of elliptic curves that $j(\tau)$ is algebraic (see, for instance, \cite{silverman} Chapter II).

\begin{proof}
  Suppose that $\tau$ and $j(\tau)$ are both algebraic. Since $j(\tau)$ is algebraic, which means that the elliptic curve corresponding to $\tau$ can be defined over the field of algebraic numbers $\overline{\QQ}\subset \CC$, there exists a lattice $\Lambda = \ZZ \omega_1 + \ZZ \omega_2 \subset \CC$ such that $\tau= \frac{\omega_2}{\omega_1}$ and $g_2(\Lambda),g_3(\Lambda)\in \overline{\QQ}$. Here, $g_2(\Lambda)$ and $g_3(\Lambda)$ are the `invariants' of the Weierstrass elliptic function $\wp_{\Lambda}$, so that we have the differential equation
  $$
\wp_{\Lambda}'(z)^2 = 4 \wp_{\Lambda}(z)^3 - g_2(\Lambda) \wp_{\Lambda}(z) - g_3(\Lambda)\text{.}
  $$

  Set
  $$
(f_1(z),f_2(z),f_3(z),f_4(z))= (\wp_{\Lambda}(z),\wp_{\Lambda}(\tau z),\wp_{\Lambda}'(z), \wp'_{\Lambda}(\tau z))
$$
and let $K$ be a number field containing $\tau$, $j(\tau)$, and the field of definition of all $2$-torsion points on the elliptic curve $E$ over $\QQ(j(\tau))$ such that $E(\CC) = \CC/\Lambda$. Then $K[f_1,f_2,f_3,f_4]$ is stable under derivation and
$$
f_j(n\omega_1 + \frac{1}{2}\omega_1) \in K
$$
for every $j=1,\ldots,4$ and $n\in \ZZ$. Since each $f_j$ is of finite order (cf. \cite{MR14} Chapter 10), it follows from Schneider-Lang's theorem that $f_1$ and $f_2$ cannot be algebraically independent over $K$. This implies that there exists $m \in \ZZ$ such that $m\omega_2$ is a period of $f_2(z) = \wp_{\Lambda}(\tau z)$, so that there exists $a,b\in \ZZ$ satisfying
$$
m\omega_2 = a \frac{\omega_1}{\tau} + b \frac{\omega_2}{\tau}\text{.}
$$
Since $\tau= \omega_2/\omega_1$, we get $m\tau^2 - b\tau - a =0$.
\end{proof}

\begin{exo}
The Gelfond-Schneider theorem asserts that for algebraic numbers $\alpha \not\in \{0,1\}$ and $\beta\not\in \QQ$, $\alpha^{\beta}$ is transcendental. Derive this result from Schneider-Lang's theorem. Hint: consider the functions $e^z$ and $e^{\beta z}$.
\end{exo}

\subsubsection{Siegel-Shidlovsky}

We say that a power series
$$
f(z) = \sum_{n=0}^{\infty}\frac{a_n}{n!}z^n
$$
defines a \emph{Siegel $E$-function} if:
\begin{enumerate}
\item there exists a number field $K\subset \CC$ such that $a_n \in K$ for every $n\ge 0$;
\item for every $\epsilon>0$, we have $\max_{\sigma}|\sigma(a_n)|=O(n^{\epsilon n})$, where $\sigma$ runs through the set of all field embeddings of $K$ into $\CC$;
  \item for every $\epsilon >0$, there exists a sequence of strictly positive numbers $(q_n)_{n\ge 0}$ such that $q_n = O(n^{\epsilon n})$ and $q_na_j$ is an algebraic integer of $K$ for every $0\le j \le n$. 
\end{enumerate}

The second condition above implies that $f$ defines an entire function on $\CC$. The prototype of an $E$-function is the exponential function $e^z$, but other remarkable examples include some special cases of hypergeometric functions, such as Bessel's function
$$
J_0(z) = \sum_{n=0}^{\infty}\frac{(-1)^n}{n!^2}\left(\frac{z}{2}\right)^{2n}\text{.}
$$

\begin{theorem}[Siegel-Shidlovsky; cf. \cite{baker75} Ch. 11]
  Let $n\ge 1$ be an integer and $f_1,\ldots,f_n$ be entire functions on $\CC$ whose Taylor coefficients at the origin all lie in a same number field $K\subset \CC$ and for which there exist rational functions $g_{ij} \in K(z)$, $1\le i,j\le n$, such that
  $$
\frac{df_i}{dz} = \sum_{j=1}^ng_{ij}f_j
$$
for every $1\le i \le n$. If, moreover:
\begin{enumerate}
\item $f_1,\ldots,f_n$ are algebraically independent over $K(z)$, and
  \item each $f_i$ is a Siegel $E$-function,
  \end{enumerate}
  then, for every non-zero algebraic number $\alpha \in \CC$ which is not contained in the set of poles of $g_{ij}$, we have
  $$
\trdeg_{K}K(f_1(\alpha),\ldots,f_n(\alpha))=n\text{.}
  $$
\end{theorem}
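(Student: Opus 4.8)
The plan is to run the classical argument of Siegel and Shidlovsky, whose backbone has three parts: a reduction to a \emph{linear} independence statement, the construction of auxiliary rational approximations by Siegel's box principle, and a comparison of an analytic and an arithmetic size estimate.

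\textbf{Step 1 (reduction to the linear case).} First I would replace $K$ by $\overline{\QQ}$, which changes nothing since $\trdeg_K K(x_1,\dots,x_m)=\trdeg_{\overline{\QQ}}\overline{\QQ}(x_1,\dots,x_m)$, and note that since there are only $n$ numbers in question it suffices to prove that $f_1(\alpha),\dots,f_n(\alpha)$ are algebraically independent over $\overline{\QQ}$. Fix $d\ge 1$ and consider the monomials $f^{\mathbf m}=f_1^{m_1}\cdots f_n^{m_n}$ with $|\mathbf m|\le d$. Sums and products of $E$-functions are again $E$-functions (the growth and denominator conditions survive because they are imposed for \emph{every} $\varepsilon>0$), so each $f^{\mathbf m}$ is an $E$-function; by the Leibniz rule together with $f_i'=\sum_j g_{ij}f_j$ the $\overline{\QQ}(z)$-span of the $f^{\mathbf m}$ is stable under $\tfrac{d}{dz}$, so these functions form a solution vector of a first-order linear system over $\overline{\QQ}(z)$ whose coefficients have no poles outside those of the $g_{ij}$; and since $f_1,\dots,f_n$ are algebraically independent over $\overline{\QQ}(z)$, the $f^{\mathbf m}$ are linearly independent over $\overline{\QQ}(z)$. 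Granting the linear version of the theorem — that $E$-functions which are linearly independent over $\overline{\QQ}(z)$ and form a solution vector of a first-order linear system over $\overline{\QQ}(z)$ take $\overline{\QQ}$-linearly independent values at every nonzero algebraic point that is regular for the system — one deduces that the $f^{\mathbf m}(\alpha)$ are $\overline{\QQ}$-linearly independent for every $d$, which is exactly the algebraic independence of $f_1(\alpha),\dots,f_n(\alpha)$. So from now on let $h_1,\dots,h_N$ be $E$-functions, linearly independent over $\overline{\QQ}(z)$, forming a solution vector of $\mathbf y'=A\mathbf y$ with $A\in M_N(\overline{\QQ}(z))$, let $\alpha\in\overline{\QQ}^\times$ be a point where $A$ has no pole, and let us show $h_1(\alpha),\dots,h_N(\alpha)$ are linearly independent over $\overline{\QQ}$.

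\textbf{Step 2 (auxiliary approximations).} For each large integer $D$ I would use Siegel's box principle to find polynomials $P_1,\dots,P_N\in\overline{\QQ}[z]$, not all zero, with $\deg P_i\le D$ and logarithmic height $O(D\log D)$, such that the remainder
\[
R(z)=\sum_{i=1}^N P_i(z)h_i(z)
\]
vanishes at $z=0$ to order at least $(N-\varepsilon)D$ for a prescribed small $\varepsilon>0$: vanishing of the first $\sim(N-\varepsilon)D$ Taylor coefficients of $R$ is a homogeneous linear system in the $\sim N(D+1)$ coefficients of the $P_i$ whose own coefficients are Taylor coefficients of the $h_i$, of controlled size and denominators precisely because the $h_i$ are $E$-functions, so Siegel's lemma applies. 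Differentiating $R$ repeatedly and using $\mathbf y'=A\mathbf y$ produces remainders $R^{(k)}=\sum_i P_{k,i}h_i$ with $P_{k,i}\in\overline{\QQ}(z)$ of controlled degree and height after clearing a bounded power of the denominator of $A$, and with $\mathrm{ord}_{z=0}R^{(k)}\ge(N-\varepsilon)D-k$.

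\textbf{Step 3 (Shidlovsky's lemma — the crux).} Here I expect the real difficulty. I would invoke the Siegel--Shidlovsky lemma: because $h_1,\dots,h_N$ are linearly independent over $\overline{\QQ}(z)$, among $R,R',\dots,R^{(s)}$ with $s=O(D)$ (the implied constant depending on $N$ and on the number of singularities of $A$, not on $D$) one can select $N$ of them, say $R^{(s_1)},\dots,R^{(s_N)}$, linearly independent over $\overline{\QQ}(z)$, and moreover the determinant $\Delta(z)=\det\big(P_{s_j,i}(z)\big)_{1\le i,j\le N}$ can be taken to be a nonzero polynomial of degree $O(D)$ whose zeros are concentrated at $z=0$ and at the singularities of $A$; in particular, since $\alpha$ is a nonzero regular point, $\Delta(\alpha)\ne 0$. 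This lemma, which replaces the ad hoc ``normality'' hypotheses of Siegel's original method, is the technical heart of the proof: it rests on a careful analysis of how the rank of the coefficient matrix of the iterated system behaves, and controlling the degrees, heights, and orders of vanishing simultaneously is exactly the delicate point.

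\textbf{Step 4 (comparison of size estimates).} Suppose for contradiction that $\sum_i a_i h_i(\alpha)=0$ with $a_i\in\overline{\QQ}$ not all zero. Using $\Delta(\alpha)\ne 0$ one can invert the matrix $\big(P_{s_j,i}(\alpha)\big)$, and combining this inversion with the assumed relation one extracts from the approximation data a \emph{nonzero} algebraic number $\theta_D$, lying in a number field of bounded degree and of logarithmic house and denominator $O(D\log D)$. For the upper bound, each $R^{(s_j)}$ is analytic with a zero of order $\ge(N-\varepsilon)D-s$ at the origin and is bounded on a slowly growing circle $|z|=\rho_0(D)$ by the order-$\le 1$ growth of the $E$-functions together with the controlled heights of the $P_{s_j,i}$, so the Schwarz lemma forces $\theta_D$ to be exponentially small. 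For the lower bound, the fundamental theorem of transcendence (Remark~\ref{rmk:ftt}) — a nonzero algebraic integer has absolute value at least $1$, equivalently a product-formula estimate — keeps $|\theta_D|$ from being too small. The ``$\forall\varepsilon$'' clauses in the definition of an $E$-function are engineered precisely so that for $D$ large the analytic gain overtakes the arithmetic loss, so these two bounds become incompatible; this contradiction proves the linear statement and hence the theorem. Carrying out this last balancing act, and proving Shidlovsky's lemma, are the two steps I expect to be genuinely hard.
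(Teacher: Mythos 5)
Note first that the paper does not prove this theorem: it is quoted as a known result with a pointer to \cite{baker75}, Chapter 11, precisely because its proof is long, so there is no internal argument to compare yours with. Your outline is the standard Siegel--Shidlovsky strategy from that literature, and Steps 1, 2 and the architecture of Step 4 are sound: the reduction of algebraic independence to linear independence of the monomials $f^{\mathbf m}$ (which are again $E$-functions forming a solution of a first-order linear system, linearly independent over $\overline{\QQ}(z)$ by the hypothesis of algebraic independence over $K(z)$), the construction of the auxiliary form $R$ by Siegel's lemma, and the final comparison of a Liouville-type lower bound (Remark \ref{rmk:ftt}) with an analytic upper bound.

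The genuine gap is Step 3, which you rightly call the crux but then state incorrectly. For a \emph{fixed} choice of $N$ rows $k_1<\dots<k_N\le s$, the degree/order bookkeeping does not localize the zeros of $\Delta$ at $0$ and the singularities: one only gets $\deg\Delta-\ord_0\Delta=O(\varepsilon D+s)=O(D)$, which is comparable to $\deg\Delta$ itself, so nothing prevents $\alpha$ from being among the unaccounted zeros and the conclusion ``$\Delta(\alpha)\neq0$'' simply does not follow from this count. The correct form of Shidlovsky's lemma asserts that the evaluated matrix $\bigl(P_{k,i}(\alpha)\bigr)_{0\le k\le s,\,1\le i\le N}$ has full rank $N$, the choice of the $N$ rows being allowed to depend on $\alpha$; its proof is not a zero count at the origin but hinges on a multiplicity estimate at the regular point $\alpha$ itself (a deficient rank would produce a nonzero form $\sum_i Q_ih_i$ with polynomial coefficients of controlled degree vanishing at $\alpha$ to an impossibly high order, contradicting the linear independence of the $h_i$ over $\CC(z)$ and the differential system). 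Two further points are glossed over: one must know that not all $h_i(\alpha)$ vanish (guaranteed in the monomial setup by the degree-zero monomial $1$), and the final contradiction in Step 4 is a fight of \emph{constants}, not of growth rates, since both the arithmetic lower bound and the analytic upper bound have the shape $e^{-cD\log D}$; it is exactly in making the analytic constant exceed the arithmetic one that the ``for every $\epsilon$'' clauses of the $E$-function definition and the choice of parameters are spent. As written, then, your proposal reduces the theorem to these unproved ingredients (one of them misstated) rather than proving it; for complete arguments see the reference given in the statement.
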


Besides the arithmetic and the growth conditions, Siegel-Shidlovsky's and Schneider-Lang's theorems differ in an essential aspect: in Siegel-Shidlovsky the differential equation must be linear over $K(z)$. This more restrictive hypothesis yields, on the other hand, a stronger result of algebraic independence, while in Schneider-Lang we can only obtain transcendence. Observe however that the hypotheses of both results are structurally very similar.

\begin{ex}
  The Bessel function $J_0$ introduced above satisfies the linear equation
  $$
z^2 \frac{d^2J_0}{dz^2} + z\frac{d J_0}{dz} + z^2J_0 =0\text{.}
$$
Applying Siegel-Shidlovsky's theorem, we obtain that for every algebraic $\alpha \in \CC$, $J_0(\alpha)$ and $J_0'(\alpha)$ are algebraically independent over $\QQ$.
\end{ex}

\begin{obs}
As Schneider-Lang's theorem (see Remark \ref{rmk:genSL} above), there are also geometric generalisations of Siegel-Shidlovsky's theorem; see \cite{bertrand12} and \cite{gasbarri13}.
\end{obs}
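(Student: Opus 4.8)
The plan is to substantiate this Remark by extracting from the cited works a representative geometric statement and indicating its proof, the classical Siegel--Shidlovsky theorem reappearing as the case of a trivialised connection on $\AA^1_K$.

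First I would fix the geometric setup. Replace $\Spec K(z)=\Spec K(\AA^1)$ by a smooth geometrically connected $K$-variety $C$ (a curve, to fix ideas), and replace the linear system $df_i/dz=\sum_j g_{ij}f_j$ by a vector bundle with connection $(\mathcal{E},\nabla)$ on $C$, regular singular at the boundary; a multivalued horizontal section $\vec f=(f_1,\dots,f_n)$ on a small disc of $C^{\an}$ then plays the role of the tuple of $E$-functions. The \emph{functional} hypothesis ``$f_1,\dots,f_n$ algebraically independent over $K(z)$'' becomes the requirement that the graph of $\vec f$ be Zariski-dense in the total space of $\mathcal{E}$ over the generic point $\Spec K(C)$; by the Tannakian dictionary this density is controlled by the differential Galois group $G$ of $(\mathcal{E},\nabla)$, and the natural conclusion is that $\trdeg_K K(f_1(\alpha),\dots,f_n(\alpha))=\dim G$ for every algebraic point $\alpha$ away from the singular locus. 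The \emph{arithmetic} hypothesis that each $f_i$ be a Siegel $E$-function is replaced by asking that $(\mathcal{E},\nabla)$ be of \emph{geometric origin}, i.e.\ a subquotient of a Gauss--Manin connection attached to a smooth projective family over $C$; by André's theory of $G$- and $E$-functions this is precisely what guarantees the archimedean growth and the $p$-adic integrality of the Taylor coefficients of $\vec f$ needed below.

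The proof then splits into a functional half and an arithmetic half. The functional half is pure Tannakian bookkeeping: one shows that the Zariski closure $Z$ of the graph of $\vec f$ over $K(C)$ is, fibrewise over $C$, a homogeneous space under $G$ with dense orbit, whence $\dim Z=\dim G$; this is the analogue of passing from ``no algebraic relations over $K(z)$'' to ``full rank''. The arithmetic half produces the lower bound $\trdeg_K K(f_1(\alpha),\dots,f_n(\alpha))\ge\dim G$, and here the two references take complementary routes. In the spirit of \cite{bertrand12} one runs the classical machine intrinsically on the group $G$: a Siegel-lemma construction of an auxiliary horizontal section, a multiplicity (zero) estimate on $G$ in the style of Masser--Wüstholz--Philippon playing the role of Shidlovsky's lemma, and the fundamental theorem of transcendence of Remark \ref{rmk:ftt} to conclude --- Bertrand's specific contribution being to make this work when $G$ is genuinely non-commutative or semi-abelian, where the requisite vanishing lemma for horizontal sections must be reproved. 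In the spirit of \cite{gasbarri13} one instead replaces the elimination-theoretic height estimates by Arakelov slope inequalities for the Hermitian vector bundle of jets of sections of $\mathcal{E}$ along the orbit, the ``geometric origin'' hypothesis entering precisely through its control of the relevant slopes.

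The main obstacle is the interface between these two halves, i.e.\ turning the Tannakian dimension count into a numerical transcendence bound. Over $\AA^1$ this hinges on Shidlovsky's lemma, which bounds the order of vanishing at a point of an auxiliary polynomial in the $f_i$; its geometric counterpart is a uniform multiplicity estimate along the $G$-orbit with explicit control of degrees and heights, and this is already delicate over a curve, becoming the crux when $C$ is higher-dimensional or when $(\mathcal{E},\nabla)$ is irregular, so that $\vec f$ acquires components of positive Gevrey type and one must argue through the Laplace transform and the formal structure at infinity. Dovetailing with this is the verification that ``geometric origin'' really supplies the needed estimates --- essentially André's theorem that the minimal operator annihilating an $E$-function of geometric origin is a $G$-operator with controlled local exponents --- which is itself a deep input rather than a formality. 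It is this arithmetic globalisation, and not the Tannakian formalism, that makes the geometric Siegel--Shidlovsky theorem hard; specialising to $C=\AA^1_K$, where all these estimates are classical after Chudnovsky and André, recovers the statement of the previous subsection.
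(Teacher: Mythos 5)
This statement is a bibliographical remark, not a theorem: the paper offers no proof of it, and none is needed beyond the two citations themselves. So there is no argument of the paper's to compare yours against; the expected content of the ``proof'' is simply the observation that \cite{bertrand12} and \cite{gasbarri13} exist and generalise the Siegel--Shidlovsky setting geometrically, in parallel with Remark \ref{rmk:genSL} for Schneider--Lang.

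What you wrote is a speculative reconstruction of a research programme rather than a substantiation of the remark, and several specifics do not match the cited works. Bertrand's paper \cite{bertrand12} is essentially a new, connection-theoretic proof of Shidlovsky's lemma (the zero estimate) for linear systems over the projective line, not a transcendence theorem for non-commutative or semi-abelian differential Galois groups; Gasbarri's paper \cite{gasbarri13} concerns horizontal sections of connections on curves over number fields and proceeds by Bost's slope method, with hypotheses phrased as growth and integrality conditions on the sections, not by assuming the connection is of geometric origin. More seriously, the statement you set out to prove --- that $\trdeg_K K(f_1(\alpha),\ldots,f_n(\alpha))$ equals the dimension of the differential Galois group at every nonsingular algebraic point, under a ``geometric origin'' hypothesis --- is not a theorem of either reference (nor of André's theory of $E$-operators, which you fold in without attribution of its actual scope); asserting it and then gesturing at Siegel's lemma, multiplicity estimates on $G$, and slope inequalities leaves the genuinely hard steps (the zero estimate along the orbit, and the arithmetic inputs) entirely unproved. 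For a remark of this kind, the correct response is a citation, or at most a faithful one-sentence paraphrase of what each cited paper actually proves.
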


\section{The theorem of Nesterenko}

Both in Schneider-Lang and in Siegel-Shidlovsky results, functions (holomorphic or meromorphic) are assumed to be defined on the whole complex plane $\CC$. This rules out any immediate application of these methods to functions defined on proper domains of $\CC$, such as modular functions.

Note that Schneider's theorem (Corollary \ref{coro:schneider} above) is indeed a theorem on the values of some modular function, but its proof relies fundamentally on elliptic functions. Schneider himself, in his famous memoir \cite{schneider57} (p. 138),  asks if it is possible to recover his result through a \emph{direct} study of the $j$-function.

Schneider's question remains unanswered, but we dispose nowadays of other transcendence results on modular functions with truly modular proofs. The following statement was conjectured by Mahler \cite{mahler69a} and proved by Barré-Sirieix, Diaz, Gramain and Philibert \cite{stephanois}.

\begin{theorem}
  For every $\tau \in \mathbf{H}$,
  \begin{align*}
    \trdeg_{\QQ}\QQ(e^{2\pi i \tau}, j(\tau))\ge 1\text{.}
  \end{align*}
\end{theorem}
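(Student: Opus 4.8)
The plan is to argue by contradiction, along the classical three-step architecture of transcendence proofs --- construction of an auxiliary function, an analytic upper bound, an arithmetic lower bound --- the departure from the Schneider--Lang setting being that $j$ is defined only on a disc, so the rôle of global growth estimates is taken over by the $q$-expansion of $j$ at the cusp together with the modular equations. Suppose $q_0=e^{2\pi i\tau_0}$ and $j_0=j(\tau_0)$ are both algebraic, put $K=\QQ(q_0,j_0)$, $d=[K:\QQ]$, and regard $j$ as a holomorphic function $j(q)$ on $\DD^{*}=\{0<|q|<1\}$. I would use three features of it. \textbf{(i)} Its Fourier expansion $j(q)=q^{-1}+\sum_{n\ge 0}c(n)q^{n}$ has \emph{integer} coefficients, and $j(q)$ admits the circle $|q|=1$ as natural boundary, so it is transcendental over $\CC(q)$. \textbf{(ii)} For $k\ge 1$, if $j_0$ is algebraic then so is $j(k\tau_0)$ --- it is a root of the modular polynomial $\Phi_k(j_0,Y)\in\overline{\QQ}[Y]$, which is monic in $Y$, of degree $\psi(k)=k\prod_{p\mid k}(1+p^{-1})$, and of logarithmic height $O(k\log k)$ --- while $q_0^{k}=e^{2\pi ik\tau_0}$ is algebraic of bounded height. \textbf{(iii)} By Ramanujan's relations \eqref{rameq} the ring $\QQ[q,E_2,E_4,E_6]$ is stable under $D=q\tfrac{d}{dq}=(2\pi i)^{-1}\tfrac{d}{d\tau}$, and since $j,Dj,D^{2}j,D^{3}j$ all lie in $\QQ(E_2,E_4,E_6)$, a field of transcendence degree $\le 3$ over $\QQ$, the function $j(q)$ satisfies an algebraic differential equation of order $\le 3$ over $\QQ$; this is the precise point at which \eqref{rameq} enters.

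First I would construct the auxiliary function. Fix a radius $\rho\in(|q_0|,1)$ and a small $\kappa\in(0,1)$; for a large integer $n$, Siegel's lemma yields $P_n\in\ZZ[X,Y]\setminus\{0\}$ of partial degrees $\le n$ with $\log\|P_n\|_{\infty}=O(\kappa^{2}n^{2})$ such that $F_n(q):=q^{n}P_n(q,j(q))\in\ZZ[[q]]$ vanishes at $q=0$ to order $\ge T:=\lfloor\kappa n^{2}\rfloor$: its $(n+1)^{2}$ coefficients are subject to only $T$ linear conditions, and by \textbf{(i)} (using the radius of convergence $1$) the relevant Taylor coefficients of the powers $j(q)^{b}$, $b\le n$, have logarithmic size $O(\kappa n^{2})$. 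Estimating the coefficients of $F_n$ by Cauchy's inequalities on $|q|=\rho$ and summing the geometric tail gives, for any fixed $\rho'\in(|q_0|,\rho)$,
\[
\log\bigl|F_n(q)\bigr|\ \le\ O(\kappa^{2}n^{2})\ -\ T\log\!\bigl(\rho/|q|\bigr)\ \le\ -\,c_1\,n^{2}
\qquad\text{for all } |q|\le\rho',
\]
once $\kappa$ is small enough that the Siegel loss $O(\kappa^{2}n^{2})$ is beaten by $\kappa n^{2}\log(\rho/\rho')$; in particular $F_n$ is this small at every $q_0^{k}$ with $|q_0|^{k}\le\rho'$.

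On the arithmetic side, by \textbf{(ii)} the value $F_n(q_0^{k})=q_0^{kn}\,P_n(q_0^{k},j(k\tau_0))$ is algebraic of degree at most $d\,\psi(k)$ over $\QQ$ with controlled house and denominator, so if it is non-zero it is bounded below by Liouville's inequality --- the ``fundamental theorem of transcendence'' of Remark~\ref{rmk:ftt}. To secure a non-zero value I would invoke the zero (multiplicity) estimate flowing from \textbf{(iii)}: the \emph{fixed}, non-identically-zero holomorphic function $q\mapsto P_n(q,j(q))$ has only $O(n^{2})$ zeros in the disc $|q|\le\rho'$, so among the points $q_0^{k}$ with $k^{*}\le k\le k^{*}+O(n^{2})$ (here $k^{*}$ is fixed with $|q_0|^{k^{*}}\le\rho'$) at least one, say $q_0^{k_0}$, gives $P_n(q_0^{k_0},j(k_0\tau_0))\neq 0$. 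For that value the analytic upper bound above and the Liouville lower bound must then be put in contradiction for $n$ large.

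The step I expect to be the main obstacle is precisely the last one: calibrating the vanishing order $T=\kappa n^{2}$, the degree $n$ and the height of $P_n$, and the index $k_0$ of the isogeny --- which controls, via $\Phi_{k_0}$, the degree and house of $j(k_0\tau_0)$ --- so that the analytic smallness genuinely beats the Liouville bound. A crude use of the preceding paragraph gives $k_0=O(n^{2})$ and a Liouville bound that is too weak; making the argument close requires a sharper zero estimate and more careful bookkeeping (for instance, keeping $k_0$ bounded, or replacing the multiple evaluations by a non-degeneracy lemma for $P_n(q,j(q))$ at $q_0$ itself, or by an elimination step à la Philippon as in Theorem~\ref{thm:philippon}). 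It is here that the efficiency of the multiplicity estimate supplied by Nesterenko's ``$D$-property'' for \eqref{rameq} --- rather than the mere integrality of the $q$-expansion --- becomes indispensable.
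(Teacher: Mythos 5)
The paper never proves this statement: it is quoted from \cite{stephanois}, and within this text it is subsumed by Nesterenko's theorem (whose proof is the one sketched in Section 3) --- indeed, if $e^{2\pi i\tau}$ and $j(\tau)$ were both algebraic, then Nesterenko's theorem would make $E_2(\tau),E_4(\tau),E_6(\tau)$ algebraically independent over $\overline{\QQ}$, contradicting the nontrivial relation $j(\tau)\bigl(E_4(\tau)^3-E_6(\tau)^2\bigr)=1728\,E_4(\tau)^3$ with $j(\tau)\in\overline{\QQ}$. Your plan instead attacks the statement directly, in the style of \cite{stephanois}: integrality of the $q$-expansion, Siegel's lemma, a Schwarz-type upper bound, modular polynomials to make $j(k\tau_0)$ algebraic, and Liouville (Remark \ref{rmk:ftt}). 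That is the right architecture, but as written it is a programme rather than a proof, and what is missing is exactly the decisive step.

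Concretely, two things break. First, the claim that the fixed function $q\mapsto P_n(q,j(q))$ has only $O(n^2)$ zeros in $|q|\le\rho'$ is unsupported: a priori you only know $\ord_{q=0}F_n\ge T$, and any Jensen-type count of the zeros of $F_n$ in a disc containing the origin needs an \emph{upper} bound on that vanishing order, i.e.\ a multiplicity estimate; the one available here (Corollary \ref{coro:zlramanujan}, resting on the $D$-property and the Zero Lemma, Theorem \ref{thm:zerolemma}, applied after clearing $\Delta^n$ to pass from $j$ to $E_4,E_6$) gives $O(n^4)$, not $O(n^2)$. Second, even granting $O(n^2)$ zeros, your own bookkeeping shows the endgame fails: with $k_0=O(n^2)$, the degree $d\,\psi(k_0)=O(n^2)$ and the height $O(\psi(k_0)\log k_0)$ of $j(k_0\tau_0)$ make the Liouville lower bound of order $-n^5$ (up to logarithms), which is perfectly compatible with the Schwarz upper bound $-c_1 n^2$ (and the factor $q_0^{k_0 n}$ degrades the comparison further), so no contradiction is reached. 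The devices you defer to at the end --- keeping $k_0$ bounded, a non-degeneracy lemma at $q_0$ itself, an elimination step à la Philippon as in Theorem \ref{thm:philippon} --- are not routine calibration but the actual mathematical content of \cite{stephanois} and of Nesterenko's method. To get a complete argument with the tools of this paper, either carry out such a non-vanishing/elimination step in full, or simply derive the statement from Nesterenko's theorem as indicated above.
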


This implies that $j(\tau)$ is transcendental whenever $e^{2\pi i \tau}$ is algebraic. Observe the appearance of the `modular parameter' $q= e^{2\pi i \tau}$ instead of $\tau$. Shortly after, Nesterenko generalised the above result in his famous theorem on values of Eisenstein series \cite{nesterenko96}.

\begin{theorem}
 For every $\tau \in \mathbf{H}$, we have
 $$
 \trdeg_{\QQ}\QQ(e^{2\pi i \tau},E_2(\tau),E_4(\tau),E_6(\tau))\ge 3\text{.}
 $$
\end{theorem}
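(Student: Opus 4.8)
The plan is to apply Philippon's criterion (Theorem \ref{thm:philippon}) with $n=4$, $\alpha=(e^{2\pi i\tau},E_2(\tau),E_4(\tau),E_6(\tau))$, and $r=4$, so that the conclusion $\trdeg_{\QQ}\QQ(\alpha_1,\ldots,\alpha_4)\ge r-1=3$ is exactly Nesterenko's theorem. To feed the criterion, for each sufficiently large $d$ I need to produce an auxiliary polynomial $Q_d\in\ZZ[X_1,X_2,X_3,X_4]\setminus\{0\}$ with $\deg Q_d=O(d\log d)$, $\log\|Q_d\|_\infty=O(d(\log d)^2)$, and a two-sided estimate $-ad^4\le\log|Q_d(\alpha)|\le -bd^4$. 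The construction of $Q_d$ has a classical shape: by a Siegel-lemma / pigeonhole argument on the finite-dimensional space of polynomials of bounded degree and height, one finds $Q_d$ whose associated function $F_d(\tau)=Q_d(q,E_2(\tau),E_4(\tau),E_6(\tau))$ (with $q=e^{2\pi i\tau}$) vanishes to high order at $\tau$, equivalently so that $F_d$ has a zero of large order $\sim d^4$ in the variable $q$ at the relevant point. The upper bound $\log|Q_d(\alpha)|\le -bd^4$ is then the statement that this function is genuinely small at $q$, which follows from the high vanishing order together with a Schwarz-lemma estimate controlling $F_d$ on a disc, using that $q$ lies well inside the unit disc (so one must first treat the cusp $\Im\tau\to\infty$ separately, where $E_2,E_4,E_6\to 1$, e.g. by a direct argument or a change of base point).

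The heart of the matter, and the place where Ramanujan's equations \eqref{rameq} enter, is the \emph{lower} bound $\log|Q_d(\alpha)|\ge -ad^4$: one must show that the small value cannot be too small, i.e. that no auxiliary polynomial can over-vanish. This is Nesterenko's ``$D$-property''. Concretely, the operator $D=\frac{1}{2\pi i}\frac{d}{d\tau}$ acts on the ring $\ZZ[q,E_2,E_4,E_6]$ — by \eqref{rameq} and $Dq=q$ it sends each generator to a polynomial in the generators (with controlled degree and height) — so one obtains a polynomial vector field on $\AA^4$ along which the curve $\tau\mapsto(q,E_2,E_4,E_6)$ is an integral curve. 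The $D$-property asserts a Nullstellensatz-type bound: if $P$ is a polynomial vanishing at a point of this curve to order $\gg \deg(P)^3$ along $D$, then $P$ is forced to vanish on a proper invariant subvariety — and one shows the only closed $D$-invariant subvarieties are of very restricted type, ultimately yielding a contradiction with the minimality/nonvanishing of $Q_d$. Establishing this requires a multiplicity estimate for the flow of the Ramanujan vector field (bounding the order of contact of an algebraic hypersurface with a non-invariant integral curve), which is proved by an induction on dimension, repeatedly differentiating and extracting ideal-theoretic consequences, and crucially using that the vector field has \emph{no} invariant algebraic hypersurface other than $\{q=0\}$ (this is where the dynamics genuinely matters).

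I expect the main obstacle to be precisely this multiplicity estimate for the Ramanujan flow: the Siegel-lemma construction and the analytic Schwarz/Schwarz-lemma estimates are standard in shape, and the arithmetic bookkeeping (degrees, heights) is routine once one tracks how $D$ enlarges degree and $\|\cdot\|_\infty$. By contrast, ruling out excess vanishing is a genuinely nontrivial input: it needs both the structural fact that $(q,E_2,E_4,E_6)$ satisfies the Ramanujan system with good integrality, and a real theorem (Nesterenko's zero estimate, whose proof goes through an inductive analysis of $D$-stable ideals) guaranteeing that algebraic hypersurfaces cannot be tangent to this integral curve to order larger than a constant times a power of their degree. Assembling these — Philippon's criterion as the black box converting the analytic/arithmetic data into a transcendence-degree statement, Siegel's lemma for the auxiliary function, the analytic estimates for the upper bound, and the $D$-property (multiplicity estimate) for the lower bound — completes the proof.
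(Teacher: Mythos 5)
Your overall architecture --- Philippon's criterion (Theorem \ref{thm:philippon}) with $r=4$, a Siegel-lemma auxiliary polynomial, an analytic smallness estimate, and a multiplicity estimate for the Ramanujan vector field --- is indeed the skeleton of Nesterenko's proof as presented in the paper. But two of the central steps, as you describe them, have genuine gaps. First, the high-order vanishing must be imposed at the cusp $q=0$, not ``at $\tau$'' or at the point $q=e^{2\pi i\tau}$. Siegel's lemma solves a linear system over $\ZZ$, and the vanishing conditions are integral precisely because the $q$-expansions of $E_2,E_4,E_6$ have integer coefficients --- an arithmetic input that never appears in your proposal. Imposing vanishing at $q=e^{2\pi i\tau}$ itself is not accessible to the pigeonhole argument (the linear conditions are not defined over a number field for a general, typically transcendental, point), and would in any case give $Q_d(\alpha)=0$, destroying the lower bound Philippon requires.

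Second, the lower bound $\log|Q_d(\alpha)|\ge -ad^4$ is not a consequence of the $D$-property alone, and the polynomial fed to Philippon is not the Siegel-lemma polynomial. In the paper one sets $f_d(q)=P_d(q,E_2(q),E_4(q),E_6(q))$, $m_d=\ord_{q=0}f_d$, and proves (Lemma \ref{lemma:lemma2}) that some derivative of order $k_d=O(d\log m_d)$ satisfies the two-sided bound $-\alpha m_d\le\log|f_d^{(k_d)}(z)|\le-\beta m_d$; the lower bound there rests on the Liouville-type fact that the first nonzero Taylor coefficient of $f_d$ at $q=0$ is a nonzero integer, hence of absolute value at least $1$. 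The Ramanujan derivation then converts this derivative into the value of a new integer polynomial $Q_d=w^{[k_d]}P_d$, via $(12q)^{k_d}f_d^{(k_d)}(q)=Q_d(q,E_2(q),E_4(q),E_6(q))$. The zero estimate (the $D$-property through Theorem \ref{thm:zerolemma}, i.e.\ Corollary \ref{coro:zlramanujan}) enters only at the very end, to guarantee $m_d=O(d^4)$, that is, that the construction did not over-vanish at the cusp; by itself it cannot bound from below the absolute value of a polynomial evaluated at a transcendental point. Your sketch conflates $P_d$ with $Q_d$, omits the derivative step and the integrality argument, and therefore leaves the crucial lower bound unproved. Two smaller inaccuracies: the multiplicity bound has exponent $4$ (the number of variables), not $3$ --- your own construction produces vanishing of order $\ge cd^4\gg (\deg P_d)^3$, so a zero estimate with exponent $3$ would contradict it; and $\{x_0=0\}$ is not the only invariant hypersurface: the discriminant locus $\{x_2^3=x_3^2\}$ is also invariant (since $\theta\Delta=E_2\Delta$), and the correct statement, proved in the paper, is that every invariant subvariety through $(0,1,1,1)$ lies in $\{x_0=0\}\cup\{x_2^3-x_3^2=0\}$, each of which the curve $\varphi$ meets with order $1$ at $q=0$ --- this is exactly what gives the $D$-property.
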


Taking, for instance, $\tau=i$, we obtain the algebraic independence of $e^{\pi},\pi, \Gamma(1/4)$ over $\QQ$. We refer to Nesterenko's original paper \cite{nesterenko96} for more applications. In Section \ref{sec:ellper} below, we shall interpret Nesterenko's result in terms of periods of elliptic curves.

\subsection{Nesterenko's $D$-property and a zero lemma}

Let $X$ be a smooth affine variety over $\CC$ equipped with a vector field $v \in \Gamma(X,T_{X/\CC})$. We say that a closed subvariety $Y$ of $X$ is \emph{$v$-invariant} if $v$ restricts to a vector field on $Y$. In other words, if $v$ is seen as a derivation on the ring of regular functions $\mathcal{O}_X(X)$, and if $I_Y$ denotes the ideal of $\mathcal{O}_X(X)$ corresponding to $Y$, then $Y$ is $v$-invariant if $v(I_Y)\subset I_Y$.

\begin{ex}\label{ex:vfexp}
  Consider the vector field
$$
v = \frac{\partial}{\partial x} + y\frac{\partial}{\partial y} 
$$
on $\AA^2_{\CC}= \Spec \CC[x,y]$. It is clear that $V(y)$ is a $v$-invariant subvariety of $\AA^2_{\CC}$. Let us prove that this is the only one. For any $v$-invariant subvariety $Y$, we can write $Y=V(P)$ where
$$
P(x,y) = P_0(x) + P_1(x)y + \cdots + P_n(x)y^n\text{, }\ \ \ \ P_n(x)\neq 0
$$
is an irreducible polynomial dividing
$$
v(P)=\frac{\partial P}{\partial x} + y \frac{\partial P}{\partial y}
$$
Since $\deg P = \deg_x P_n + n =  \deg v(P)$, we must have $v(P)=\lambda P$ for some constant $\lambda \in \CC\setminus\{0\}$; that is
$$
P'_j(x) + j P_j(x) = \lambda P_j(x) \ \ \ \ \ \ ( j=0,\ldots,n)
$$
It is not hard to conclude from our hypotheses that $\lambda=1$, $P_1(x)=P_1(0)$ is constant $\neq 0$, and $P= P_1(0)y$. In other words, $Y=V(y)$.
\end{ex}

\begin{defi}
  Let $X$ be a smooth affine variety over $\CC$, $v \in \Gamma(X,T_{X/\CC})$ be a vector field on $X$, and
  $$
  \varphi: U \subset \CC \to X(\CC)
  $$
  be a non-constant holomorphic integral curve\footnote{By this, we simply mean that the derivative of $\varphi$ at every $z\in U$ is a multiple of $v_{\varphi(z)}$.} of $v$ defined on some connected open neighbourhood $U$ of $0\in \CC$. We say that $\varphi$ satisfies \emph{Nesterenko's $D$-property} if there exists a constant $c>0$ such that, for every $v$-invariant closed subvariety $Y$ of $X$, there exists a regular function $f$ on $X$ vanishing on $Y$ and satisfying
  $$
\ord_{0}(f\circ \varphi) \le c\text{.}
  $$
\end{defi}

\begin{lemma}\label{lemma:closureinvariant}
For $X$, $v$ and $\varphi$ as in the above definition, let $Y$ be the Zariski closure of $\varphi(U)\subset X(\CC)$ (i.e., $Y$ is the smallest subvariety of $X$ such that $\varphi(U)\subset Y(\CC)$). Then $Y$ is $v$-invariant.
\end{lemma}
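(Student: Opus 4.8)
The plan is to show that the ideal $I_Y \subset \mathcal{O}_X(X)$ is stable under the derivation $v$. The key observation is that a regular function $f$ on $X$ vanishes on $Y = \overline{\varphi(U)}$ if and only if $f \circ \varphi \equiv 0$ on $U$: indeed $Y$ is by definition the smallest subvariety whose $\CC$-points contain $\varphi(U)$, so $f \in I_Y$ is equivalent to $f$ vanishing identically on $\varphi(U)$, hence (since $U$ is open and connected and $f\circ\varphi$ is holomorphic) to $f\circ\varphi \equiv 0$.

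Now take any $f \in I_Y$. Because $\varphi$ is an integral curve of $v$, for every $z \in U$ the derivative $\varphi'(z)$ is a scalar multiple $\lambda(z)$ of the tangent vector $v_{\varphi(z)}$, and $\lambda$ is holomorphic on $U$ (it is the ratio of $\varphi'$ to the nowhere-relevant components of $v$ along the curve, or more cleanly: pulling back, $\frac{d}{dz}(g\circ\varphi) = \lambda \cdot (v g)\circ\varphi$ for every regular function $g$ on $X$, which one checks on coordinates and which determines $\lambda$ as a meromorphic, in fact holomorphic, function since $\varphi$ is non-constant). Applying this to $g = f$ and using $f\circ\varphi \equiv 0$ gives
$$
0 = \frac{d}{dz}(f\circ\varphi) = \lambda \cdot (vf)\circ\varphi
$$
on $U$. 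Since $\varphi$ is non-constant, $\lambda$ is not identically zero, so $\lambda$ vanishes only on a discrete set; hence $(vf)\circ\varphi$ vanishes on a dense open subset of $U$, and by holomorphy $(vf)\circ\varphi \equiv 0$. By the equivalence from the first paragraph, this means $vf \in I_Y$. As $f \in I_Y$ was arbitrary, $v(I_Y)\subset I_Y$, i.e. $Y$ is $v$-invariant.

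The only genuinely delicate point is the construction of the holomorphic multiplier $\lambda$ and the chain-rule identity $\frac{d}{dz}(g\circ\varphi) = \lambda\cdot(vg)\circ\varphi$; once this is in place the rest is immediate. One clean way to handle it: work locally near a point $z_0 \in U$ with $\varphi(z_0) = p$, choose étale coordinates $x_1,\dots,x_n$ on $X$ near $p$ so that $v = \sum_i a_i(x)\,\partial/\partial x_i$, and set $\lambda(z)$ to be the common value of $\varphi_i'(z)/a_i(\varphi(z))$ (for any $i$ with $a_i(p)\neq 0$); the integral-curve condition says precisely that these ratios agree, and the definition is independent of the chosen coordinates, so the locally defined $\lambda$ glue to a global holomorphic function on $U$ (after removing the discrete zero locus of all the $a_i\circ\varphi$, where one argues by continuity that $\varphi'$ vanishes too and $\lambda$ extends). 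I expect this bookkeeping with coordinates and the glueing to be the main obstacle, though it is entirely routine.
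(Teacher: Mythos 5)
Your argument is essentially the same as the paper's: both reduce the statement to showing $v(f)\circ\varphi\equiv 0$ for $f$ vanishing on $Y$, via the holomorphic multiplier $\lambda$ with $\varphi'=\lambda\,v_{\varphi}$, the identity theorem on the connected open set $U$ (using non-constancy of $\varphi$), and the Zariski-density of $\varphi(U)$ in $Y$. The coordinate bookkeeping you flag as the delicate point is handled in the paper simply by invoking the definition of integral curve, and your version is correct.
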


\begin{proof}
  Let $f$ be a regular function on $X$ vanishing on $Y$; we must prove that $g\defeq v(f)$ also vanishes on $Y$. Let $\lambda$ be the holomorphic function on the open subset of $U$ where $v_{\varphi(z)}\neq 0$ such that $\varphi'(z)=\lambda(z) v_{\varphi(z)}$, so that
  $$
\lambda(z) (g\circ \varphi)(z) = (f\circ \varphi)'(z) =0\text{.}
$$
Since $U$ is connected and $\varphi$ is not constant, we obtain $g\circ \varphi \equiv 0$. Since the image of $\varphi$ is dense in $Y$, we conclude that $g$ vanishes on $Y$.
\end{proof}

The above lemma implies in particular that any $\varphi$ satisfying Nesterenko's $D$-property must have Zariski-dense image in $X$. 

Conversely, if the image of $\varphi$ is Zariski-dense in $X$ and if there exists a finite number of subvarieties $Y_1,\ldots,Y_m\subset X$ for which every $v$-invariant subvariety $Y$ of $X$ containing $\varphi(0)$ is contained in some $Y_i$, then $\varphi$ satisfies the $D$-property. This is how Nesterenko's $D$-property is verified in practice.

\begin{ex}
We have already seen in Example \ref{ex:exporev} that the image of the integral curve
\begin{align*}
  \varphi: \CC \to \AA^2_{\CC}(\CC)\text{, }\qquad             z \mapsto (z,e^z)\text{.}
\end{align*}
of the vector field $v$ of Example \ref{ex:vfexp} is Zariski-dense in $\AA^2_{\CC}$. Since $v$ admits at most a finite number of invariant subvarieties (actually, there is only one!), we conclude that $\varphi$ satisfies the $D$-property.
\end{ex}

\begin{obs}
  Alternatively, one could remark that the Zariski-density of the image of $z\mapsto (z,e^z)$ implies the Zariski-density of the image of
  $$
\varphi_a: z\mapsto (z+a,e^z)
$$
for any $a \in \CC$. This `stronger' statement immediately implies Nesterenko's $D$-property for $\varphi$ since \emph{any} leaf of the holomorphic foliation on $D(y) = \AA^2_{\CC}\setminus V(y)$ induced by $v$ can be parameterised by some $\varphi_a$ (cf. Lemma \ref{lemma:closureinvariant}).
\end{obs}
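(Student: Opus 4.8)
I would treat the two assertions in turn. For the first, given $a \in \CC$ let $\sigma_a$ be the automorphism of $\AA^2_{\CC} = \Spec \CC[x,y]$ sending $(x,y)$ to $(x, e^{-a}y)$; it is invertible (inverse $(x,y) \mapsto (x, e^a y)$), and $e^{\pm a} \in \CC$. Substituting $w = z + a$ shows that the image of $\varphi_a$ is $\{(w, e^{-a}e^w) \mid w \in \CC\} = \sigma_a(\varphi(\CC))$, where $\varphi(z) = (z, e^z)$. Since an automorphism is a homeomorphism for the Zariski topology, it carries Zariski-dense sets to Zariski-dense sets, so the density of $\varphi(\CC)$ established in Example \ref{ex:exporev} yields the density of each $\varphi_a(\CC)$. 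The identical argument shows that $z \mapsto (z, c\,e^z)$ has Zariski-dense image for every $c \in \CC \setminus \{0\}$.

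On $D(y) = \AA^2_{\CC} \setminus V(y)$ the vector field $v = \partial_x + y\partial_y$ vanishes nowhere, so it induces a holomorphic foliation by curves; next I would write down its leaves. Integrating $v$ from a point $(x_0, y_0)$ with $y_0 \neq 0$ gives $t \mapsto (x_0 + t, y_0 e^t)$, whose image is $\{(w, (y_0 e^{-x_0})e^w) \mid w \in \CC\}$; choosing $a$ with $e^{-a} = y_0 e^{-x_0}$, this is precisely $\varphi_a(\CC)$. Hence every leaf of the foliation on $D(y)$ is parameterised by some $\varphi_a$, and therefore, by the first step, every leaf is Zariski-dense in $\AA^2_{\CC}$.

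For the $D$-property, let $Y \subsetneq \AA^2_{\CC}$ be a proper $v$-invariant closed subvariety; I claim $Y \subseteq V(y)$. Otherwise, choose $p \in (Y \cap D(y))(\CC)$ and let $\psi$ parameterise the leaf through $p$, with $\psi(0) = p$. Writing $I_Y = (f_1, \dots, f_k)$ and $v(f_i) = \sum_j g_{ij} f_j$ with $g_{ij} \in \CC[x,y]$ (possible since $v(I_Y) \subseteq I_Y$), the entire functions $h_i \defeq f_i \circ \psi$ satisfy the linear homogeneous system $h_i' = \sum_j (g_{ij} \circ \psi) h_j$ with $h_i(0) = f_i(p) = 0$; by uniqueness of solutions, $h_i \equiv 0$, i.e. $\psi(\CC) \subseteq Y(\CC)$. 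As $\psi(\CC)$ is Zariski-dense, this forces $Y = \AA^2_{\CC}$, contradicting properness; hence $Y \subseteq V(y)$. Since any closed subvariety strictly contained in $V(y) \cong \AA^1_{\CC}$ is a finite set of points, none of which is $v$-invariant ($v$ being nowhere zero), it follows that $Y = V(y)$ (or $Y = \emptyset$). Taking $c = 1$ in the definition, the constant $1$ handles $Y = \emptyset$ and $f = y$ handles $Y = V(y)$, because $\ord_0(y \circ \varphi) = \ord_0(e^z) = 0 \le 1$; this verifies Nesterenko's $D$-property. (Alternatively, as $\varphi(0) = (0,1) \notin V(y)$, no proper $v$-invariant subvariety contains $\varphi(0)$, and one concludes from the criterion recorded just before this Remark together with the density of $\varphi(\CC)$.)

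The only step calling for an actual argument is the linear-ODE computation showing that a $v$-invariant subvariety meeting $D(y)$ contains the whole leaf through each of its points there --- a variant of the computation in the proof of Lemma \ref{lemma:closureinvariant}, carried out along a leaf rather than along $\varphi$. It is also where the non-vanishing of $v$ on $D(y)$ is used, so that each leaf really is an integral curve and the system governing $(h_i)$ is linear. The automorphism argument and the integration of $v$ are routine.
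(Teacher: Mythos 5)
Your argument is correct and follows essentially the route the remark sketches: transfer Zariski-density to each $\varphi_a$ (via the automorphism $(x,y)\mapsto(x,e^{-a}y)$), identify the leaves of $v$ on $D(y)$ with the images of the $\varphi_a$ by integrating $v$, and conclude that any $v$-invariant subvariety meeting $D(y)$ must be all of $\AA^2_{\CC}$, so that $f=y$ (or $f=1$) witnesses the $D$-property with a uniform constant. The linear-ODE step showing that an invariant subvariety contains the whole leaf through any of its points of $D(y)$ is precisely the detail the paper's ``(cf.\ Lemma \ref{lemma:closureinvariant})'' gestures at, so this is the same approach, written out in full.
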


\begin{obs}
A famous theorem of Jouanolou (see, for instance, \cite{ghys00}) implies that any vector field $v$ on a smooth algebraic surface $X$ admitting a holomorphic integral curve with Zariski-dense image has at most a finite number of $v$-invariant subvarieties. Thus, when $\dim X=2$, Nesterenko's $D$-property is actually equivalent to the image of $\varphi$ being Zariski-dense in $X$.
\end{obs}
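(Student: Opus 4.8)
\medskip
\noindent\emph{Proof proposal.}
The mathematical content is entirely Jouanolou's theorem, which I would use in the following form (see \cite{ghys00}): if $\mathcal{F}$ is a singular holomorphic foliation on a smooth projective surface $S$ with infinitely many invariant curves, then $\mathcal{F}$ admits a non-constant rational first integral, i.e.\ a non-constant rational function $F$ on $S$ with $v(F)=0$ for every local vector field $v$ tangent to $\mathcal{F}$. The plan is to use the given holomorphic integral curve $\varphi: U\to X(\CC)$, whose image is Zariski-dense, to exclude such a first integral; by Jouanolou this forces the number of invariant curves to be finite, and the claimed equivalence then follows from the criteria already recorded in this subsection.

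First I would compactify. Since $\varphi(U)$ is Zariski-dense, $X$ is connected, hence (being smooth) irreducible; I would choose a smooth projective surface $\bar{X}\supset X$ having $X$ as a dense open subset and extend the subsheaf $\mathcal{O}_X\cdot v\subset T_X$ generated by $v$ to a singular holomorphic foliation $\mathcal{F}$ on $\bar{X}$ by saturating. Away from the vanishing locus $Z(v)$ of $v$, the leaves of $\mathcal{F}$ are the integral curves of $v$; hence every $v$-invariant curve of $X$, with the exception of the finitely many irreducible components of $Z(v)$, is $\mathcal{F}$-invariant, so that its Zariski closure in $\bar{X}$ is an $\mathcal{F}$-invariant algebraic curve. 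Consequently, if $v$ had infinitely many invariant curves, then $\mathcal{F}$ would have infinitely many invariant curves, and Jouanolou's theorem would furnish a non-constant rational first integral of $\mathcal{F}$; restricting it to $X$ produces a non-constant $F\in\CC(X)$ with $v(F)=0$ as rational functions.

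Such an $F$ cannot exist. Let $\lambda$ be the holomorphic function with $\varphi'=\lambda\,(v\circ\varphi)$ on the locus where $v_{\varphi(z)}\neq 0$, as in the proof of Lemma \ref{lemma:closureinvariant}. On the dense open connected subset $V\subset U$ on which $v_{\varphi(z)}\neq 0$ and $\varphi(z)$ lies in the domain of $F$, the chain rule gives $(F\circ\varphi)'(z)=\lambda(z)\,(v(F))(\varphi(z))=0$, so $F\circ\varphi$ is constant on $V$, say $\equiv c$; thus $\varphi(V)\subset F^{-1}(c)$, which is a proper closed subvariety of $X$ because $F$ is non-constant on the irreducible $X$, and taking closures yields $\overline{\varphi(U)}=\overline{\varphi(V)}\subset\overline{F^{-1}(c)}\subsetneq X$, contradicting Zariski-density. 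Hence $v$ has only finitely many invariant curves. Since the irreducible components of a $v$-invariant subvariety are again $v$-invariant, and an irreducible proper $v$-invariant subvariety of the surface $X$ is either one of these finitely many curves or a point of $Z(v)$, the proper closed subvariety $Y_0\defeq Z(v)\cup(\text{union of all }v\text{-invariant curves})$ of $X$ contains every proper $v$-invariant subvariety of $X$. The implication ``$D$-property $\Rightarrow$ Zariski-dense image'' is the observation made right after Lemma \ref{lemma:closureinvariant}; conversely, if $\varphi(U)$ is Zariski-dense, the sufficient criterion for the $D$-property recorded earlier in this subsection applies with $m=1$ and $Y_1=Y_0$.

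The step I expect to cost the most care is the compactification: making the correspondence ``$v$-invariant curve of $X$'' $\leftrightarrow$ ``$\mathcal{F}$-invariant algebraic curve of $\bar{X}$'' precise enough that infinitude is genuinely preserved, the two points to watch being the over-vanishing of $v$ along curves (absorbed by passing to the saturated foliation $\mathcal{F}$) and the curves introduced along the boundary $\bar{X}\setminus X$ (harmless, as there are only finitely many of them). Everything else is elementary; the one genuinely hard ingredient, Jouanolou's theorem, is quoted.
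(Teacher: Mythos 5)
Your proposal is correct and is essentially the paper's own (implicit) argument: the remark offers nothing beyond the citation of Jouanolou–Ghys \cite{ghys00} together with the density criterion recorded after Lemma \ref{lemma:closureinvariant}, and your chain — Zariski-dense integral curve $\Rightarrow$ no rational first integral for the saturated extension of $v$ to a smooth projective compactification $\Rightarrow$ (Jouanolou) finitely many invariant algebraic curves $\Rightarrow$ every proper $v$-invariant subvariety lies in the single proper closed set $Y_0=Z(v)\cup(\text{union of invariant curves})$ $\Rightarrow$ equivalence with the $D$-property — is exactly the intended route, with the compactification/saturation details correctly handled. Your $Y_0$ formulation even repairs the remark's slight looseness (if $Z(v)$ contains a curve there are infinitely many invariant points, so ``finitely many invariant subvarieties'' must be read as ``finitely many maximal proper ones''), which is precisely what the application to the $D$-property requires.
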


The \emph{raison d'être} of the $D$-property is that it gives a sufficient condition to an integral curve to satisfy certain `zero estimates' which are useful in Diophantine approximation. Here is a precise statement.

\begin{theorem}[Zero Lemma]\label{thm:zerolemma}
  Let $X$ be an open affine subscheme of $\AA^n_{\CC}$, $v \in \Gamma(X,T_{X/\CC})\setminus\{0\}$ be a vector field on $X$, $U\subset \CC$ be a neighbourhood of $0$, and $\varphi : U \to X(\CC)$ be a holomorphic map satisfying the differential equation
  $$
z \frac{d\varphi}{dz} = v \circ \varphi\text{.}
$$
If $\varphi$ satisfies the $D$-property, then there exists a constant $C>0$ such that, for every polynomial $P \in \CC[x_1,\ldots,x_n] \setminus\{0\}$, we have
$$
\ord_{0}(P\circ \varphi)\le C (\deg P)^n\text{.}
$$
\end{theorem}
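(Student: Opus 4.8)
The plan is to fix a presentation $X=\AA^n_\CC\setminus V(g)$, write $d=\deg P$, and bound $N:=\ord_0(P\circ\varphi)$ by a constant multiple of $d^n$. I would begin with reductions. Since $\varphi$ satisfies the $D$-property, its image is Zariski-dense in $X$ (as observed after Lemma~\ref{lemma:closureinvariant}), so $P\circ\varphi\not\equiv0$ and $N<\infty$; if $P(\varphi(0))\ne0$ then $N=0$, so I may assume $P(\varphi(0))=0$. Evaluating $z\frac{d\varphi}{dz}=v\circ\varphi$ at $z=0$ shows that $\varphi(0)$ is a zero of $v$; this is what will make "non-invariant directions" invisible. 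Finally, factoring $P=\prod_iQ_i^{e_i}$ into irreducibles, $N=\sum_i e_i\,\ord_0(Q_i\circ\varphi)$ and $\sum_i e_i a_i^{\,n}\le(\sum_i e_i a_i)^n$ for positive integers $a_i$, so it suffices to prove $\ord_0(Q\circ\varphi)\le C'(\deg Q)^n$ for $Q$ irreducible not dividing $g$.

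Fix such a $Q$ with $Q(\varphi(0))=0$ and $\delta:=\deg Q$. The favourable case is that the irreducible hypersurface $Y_1:=V(Q)\cap X$ is $v$-invariant: then the $D$-property provides a regular function $f$ on $X$ vanishing on $Y_1$, that is $f\in I_{Y_1}=(Q)$ — the ideal $(Q)$ being prime in the UFD $\mathcal O_X(X)$ — with $\ord_0(f\circ\varphi)\le c$, and since $Q\mid f$ this already gives $\ord_0(Q\circ\varphi)\le c\le c\,\delta^n$. In general I would iterate the derivation: the functions $v^kQ\in\mathcal O_X(X)$ satisfy $(v^kQ)\circ\varphi=\vartheta^k(Q\circ\varphi)$ with $\vartheta:=z\frac{d}{dz}$, hence all vanish at $0$, so $\varphi(0)$ lies on $W:=\bigcap_{k\ge0}V(v^kQ)\cap X$, a $v$-invariant subvariety (the ideal $(Q,vQ,v^2Q,\dots)$ being $v$-stable) which by Noetherianity is cut out by finitely many of the $v^kQ$. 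Then I would build a strictly descending chain of irreducible subvarieties $X\supsetneq Y_1\supsetneq Y_2\supsetneq\cdots\supsetneq Y_s$, each passing through $\varphi(0)$, with $Y_{i+1}$ an irreducible component of $V(v^{k_i}Q)\cap Y_i$, continuing as long as some $v^kQ$ fails to vanish on $Y_i$; because each $v^kQ$ has degree $O(\delta)$ and each step is a proper intersection, Bézout gives $\deg Y_i=O(\delta)^i$ with $s\le n$, and the chain stops with $Y_s\subseteq W$. Applying the $D$-property at $W$ and combining with a multiplicity estimate — relating $\ord_0(Q\circ\varphi)$ to the orders of contact of $\varphi$ with the $Y_i$ and their degrees — should yield $\ord_0(Q\circ\varphi)\le c_2(c_1\delta)^n\le C'\delta^n$.

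The hard part will be that last step, carried out uniformly in $Q$. Two things have to be controlled simultaneously. First, the degrees along the chain must grow only geometrically in $\delta$, which forces one to descend using the low-degree derivatives $v^kQ$ rather than generators of $I_{Y_i}$ (of degree comparable to $\deg Y_i$); so one must bound how many derivatives are needed, keeping $\deg(v^{k_i}Q)=O(\delta)$ throughout. Second, one needs a Nullstellensatz-type input: an effective bound on the exponent $M$ for which $f^M$ lies in an ideal $(Q,vQ,\dots,v^rQ)$ generated in degree $O(\delta)$, where $f$ vanishes on $W$ with $\ord_0(f\circ\varphi)\le c$. Indeed $v^kQ\circ\varphi$ has order exactly $\ord_0(Q\circ\varphi)$ for every $k$ (differentiating multiplies the leading coefficient by the order, which is nonzero), so $\ord_0(f^M\circ\varphi)\ge\ord_0(Q\circ\varphi)$, while $\ord_0(f^M\circ\varphi)=M\,\ord_0(f\circ\varphi)\le Mc$; hence $\ord_0(Q\circ\varphi)\le Mc$, and one needs $M=O(\delta^n)$. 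Arranging that every constant here depends only on $X$, $v$ and $\varphi$, and never on $Q$, is precisely the delicate content of Nesterenko's zero estimate.
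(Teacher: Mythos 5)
Your peripheral reductions are fine: passing to an irreducible factor $Q$ not dividing $g$ (where $X=\AA^n_{\CC}\setminus V(g)$), disposing of the case where $V(Q)\cap X$ is itself $v$-invariant via $I_{V(Q)\cap X}=(Q)$, and the observation that $\vartheta=z\frac{d}{dz}$ preserves order of vanishing, so $\ord_0\bigl((v^kQ)\circ\varphi\bigr)=\ord_0(Q\circ\varphi)$ for all $k$, are all correct. But the argument stops exactly where the theorem begins. In the paper, the descending chain of cycles only produces a bound on $\ord_0(P\circ\varphi)$ through the local intersection calculus of Lemma \ref{lemma:binyamini}: one needs a multiplicity $\mult_{\varphi}$ of the analytic curve against algebraic cycles satisfying $\mult_{\varphi}(Y)\le \ord_0(F\circ\varphi)\cdot \mult_p(Y)$ for any $F$ vanishing on $Y$ (property 4, which is where the $D$-property enters), the tangency inequality $\mult_{\varphi}(Y)\le \mult_{\varphi}(Y\cdot V(v(F)))$ (property 5), and the uniform bound $n_0$ on how many derivatives are needed (property 6). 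Your chain $Y_1\supsetneq Y_2\supsetneq\cdots$ through $\varphi(0)$, plus B\'ezout, controls degrees but by itself says nothing about $\ord_0(Q\circ\varphi)$; the phrase ``combining with a multiplicity estimate \dots should yield'' names precisely the estimate that has to be proved, and nothing in the outline substitutes for properties 4--6.

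The effective-Nullstellensatz route does not close this gap either, because the degree control it requires is not available. The inequality $\ord_0(Q\circ\varphi)\le Mc$ is indeed valid once $f^M\in(Q,vQ,\dots,v^rQ)$ (after clearing powers of $g$), but to get $M=O(\delta^n)$ from Koll\'ar-type bounds you need two things at once: (i) the function $f$ provided by the $D$-property vanishes only on the $v$-invariant set $W_\infty=\bigcap_{k\ge 0}V(v^kQ)\cap X$, so $r$ must be taken at least as large as the index at which the descending chain $V(Q,\dots,v^rQ)\cap X$ reaches $W_\infty$; and (ii) the generators must have degree $O(\delta)$, i.e.\ $r=O(\delta)$, since each application of $v$ adds a fixed amount to the degree. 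A bound on this stabilization index that is uniform in $Q$ is not a consequence of Noetherianity; it is an effective differential-elimination statement at least as delicate as property 6 of Lemma \ref{lemma:binyamini}, and the general bounds known for such stabilization are far from polynomial in $\delta$. So the ``hard part'' you flag is in fact the whole theorem: one must either develop the intersection-multiplicity machinery of Binyamini, as the paper does, or prove the uniform derivative/stabilization bound; the proposal does neither, and as it stands it is an outline with the central estimate missing rather than a proof.
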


The above result was proved in this geometric form by Binyamini \cite{binyamini14} and is based on Nesterenko's original result in \cite{nesterenko96}, Paragraph 5. It also admits more general versions (see \cite{fonseca19}, Appendix B).

Binyamini's approach is based on intersection theory of analytic cycles. We isolate the main technical details in the form of the following lemma.

\begin{lemma}[cf. \cite{binyamini14}]\label{lemma:binyamini}
  With notation as in Theorem \ref{thm:zerolemma}, there exists an additive function $\mult_{\varphi}$, the \emph{intersection multiplicity with $\varphi$ at $p=\varphi(0)$}, which takes an effective algebraic cycle $Z$ of $X$ and associates a natural number (or $+\infty$)
  $$
\mult_{\varphi}(Z) \in \NN \cup\{+\infty\}
$$
satisfying the following properties:
\begin{enumerate}
\item $\mult_{\varphi}(Z)$ only depends on the analytic germ of $Z$ at $p$;
\item If $Z=V(P)\cap X$, for some $P \in \CC[x_1,\ldots,x_n]\setminus\{0\}$, then $\mult_{\varphi}(Z) = \ord_0 (P \circ \varphi)$;
\item If $Z=p$, then $\mult_{\varphi}(Z)=1$;
\item For any closed subvariety $Y$ of $X$ for which $p\in Y$, and any polynomial $P \in \CC[x_1,\ldots,x_n]\setminus\{0\}$ vanishing identically on $Y$, we have $\mult_{\varphi}(Y) \le \ord_0 (P\circ \varphi)\cdot \mult_p(Y)$;\footnote{Here, $\mult_p(Y)$ denotes the Samuel multiplicity of the variety $Y$ at the closed point $p \in Y$. It is given by $S(T) = \frac{\mult_p(Y)}{d!}T^d + O(T^{d-1})$, where $d = \dim Y$, and $S \in \QQ[T]$ is the unique polynomial such that $S(n) = \text{length}(\mathcal{O}_{Y,p}/\mm_p^{n+1})$ for every $n \in \NN$.}
\item For any closed subvariety $Y$ of $X$, and any polynomial $P \in \CC[x_1,\ldots,x_n]\setminus\{0\}$  vanishing identically on $Y$ for which $v(P)$ does not vanish identically on $Y$, we have $\mult_{\varphi}(Y) \le \mult_{\varphi}(Y \cdot V(v(P)))$, where $Z_1\cdot Z_2$ denotes the intersection product of algebraic cycles;
  \item There is an integer $n_0\ge 0$ such that, for every closed subvariety $Y$ of $X$ not contained in a $v$-invariant subvariety of $X$, if $d\ge 1$ is the smallest  integer for which there exists $P \in \CC[x_1,\ldots,x_n]\setminus\{0\}$ of degree $d$ vanishing identically on $Y$, then $\min \{n  \mid v^n(P) = v(v(\cdots(v(P))\cdots)) \text{ does not vanish identically on }Y\}\le n_0$.
\end{enumerate}
\end{lemma}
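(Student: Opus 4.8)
The plan is to reduce everything to the hypersurface case — where $\mult_{\varphi}$ is already pinned down by property~(2) — and then bootstrap. First I would fix the definition: for an effective cycle $Z$ of pure dimension $d$ on $X\subset\AA^n_{\CC}$, choose a generic linear projection $\pi\colon\AA^n_{\CC}\to\AA^{d+1}_{\CC}$; for generic $\pi$ the restriction to the support of $Z$ is finite and birational onto its image, so $\overline{\pi(Z)}$ is a hypersurface with a well-defined (up to scalar) equation $R_Z$, a Chow form of $Z$, and one sets $\mult_{\varphi}(Z)\defeq\ord_0(R_Z\circ\pi\circ\varphi)$, extended additively to all effective cycles. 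That this is independent of the generic $\pi$ is a monodromy argument on the variety of projections, and additivity follows from $R_{Z_1+Z_2}=R_{Z_1}R_{Z_2}$. Properties~(1), (2), (3) are then formal: (1) because $\ord_0$ of a holomorphic function only sees its germ at $0$ and $\pi\circ\varphi$ pulls back the germ of $\overline{\pi(Z)}$ at $\pi(p)$, that is, the germ of $Z$ at $p$; (2) by taking $\pi$ close to the identity so that $R_Z=P$, the reducible case following from additivity; (3) because a generic affine hyperplane through a point close to $p$ meets $\varphi(U)$ transversally at a single point near $0$.

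Properties~(4) and~(5) are the geometric core. For~(4), read $\mult_{\varphi}(Y)=\ord_0(R_Y\circ\pi\circ\varphi)$ as the local intersection number at $\pi(p)$ of the (generic, hence smooth) curve germ $\pi\circ\varphi$ with the hypersurface germ $\overline{\pi(Y)}$. Since $P$ vanishes on $Y\subseteq V(P)$, the contact order of $\varphi$ with $V(P)$ at $p$ is $\ord_0(P\circ\varphi)$; comparing $Y$ with the ambient hypersurface $V(P)$ through the tangent cone $C_pY$, whose degree is $\mult_p(Y)$, and pushing forward by $\pi$, one gets a local Bézout estimate of the shape ``contact with $Y$ $\le$ (contact with $V(P)$) $\times$ (local degree of $Y$ at $p$)'', which is exactly~(4). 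For~(5), the differential equation is used decisively: from $z\,d\varphi/dz=v\circ\varphi$ one obtains, for any polynomial $Q$,
\[
\frac{d}{dz}(Q\circ\varphi)(z)=\frac{1}{z}\,\bigl(v(Q)\circ\varphi\bigr)(z),\qquad\text{so}\qquad \bigl(v(Q)\circ\varphi\bigr)(z)=z\,(Q\circ\varphi)'(z),
\]
whence $\ord_0(v(Q)\circ\varphi)=\ord_0(Q\circ\varphi)$ whenever the latter is $\ge1$ (here one uses that $\CC$ has characteristic $0$). Applied to $Q=P$, which vanishes on $Y\ni p$ so that $\ord_0(P\circ\varphi)\ge1$, this shows $v(P)$ has the same contact order with $\varphi$ as $P$; since $Y\cdot V(v(P))$ is an effective cycle of dimension $\dim Y-1$ supported on the proper subvariety $Y\cap V(v(P))$ of $Y$, tracking that contact order through the Chow forms of $Y$ and $Y\cdot V(v(P))$ — via a generic projection one dimension lower — yields $\mult_{\varphi}(Y)\le\mult_{\varphi}\bigl(Y\cdot V(v(P))\bigr)$.

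Property~(6) is where I expect the real obstacle. Qualitatively it is clean: if the minimal-degree $P$ satisfied $v^n(P)\in I(Y)$ for every $n\ge0$, then $Y$ would lie inside the zero set of the radical of the differential ideal $(v^n(P):n\ge0)$, which is again $v$-stable in characteristic $0$, so this zero set is a $v$-invariant subvariety containing $Y$ — and it is proper because $P\ne0$ — contradicting the hypothesis on $Y$; hence there is a least $n(Y)<\infty$ with $v^{n(Y)}(P)\not\equiv0$ on $Y$. The hard part is the \emph{uniformity} $n_0\defeq\sup_Y n(Y)<\infty$, the supremum running over all subvarieties $Y$ — of unbounded degree — not contained in a $v$-invariant subvariety; this is not a plain Noetherian induction, since the relevant Hilbert scheme is not of finite type. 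I would attack it either by showing that the loci $\{Y:n(Y)\ge N\}$ form a decreasing family whose ``limit'' as $N\to\infty$ is swept out by $v$-invariant subvarieties and invoking a compactness/semicontinuity argument, or by bounding $n(Y)$ directly in terms of the codimension of $Y$: working in the local ring at the generic point of a component of $Y$ and using the minimality of $\deg P$ to control the degrees of the successive $v^n(P)$. Carrying one of these through with a bound independent of $\deg Y$ is the crux; the other properties are either formal or reduce to standard facts about intersection multiplicities and tangent cones.
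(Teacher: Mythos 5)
You should first be aware that the paper itself offers no proof of this lemma: it is explicitly quoted from Binyamini \cite{binyamini14}, where $\mult_{\varphi}$ is constructed and its properties are established using the local intersection theory of analytic cycles, so your proposal has to stand on its own. As it stands it does not: the definition via generic projections and Chow forms is a reasonable starting point, and (1)--(3) are plausibly within reach (with one caveat: under your definition $\mult_{\varphi}(\{p\})$ is the vanishing order of $\varphi-p$ at $0$, which equals $1$ only if $\varphi'(0)\neq 0$, a hypothesis that appears nowhere in the statement and that your transversality sentence does not address). But the two items you yourself call the geometric core are not proved. For (4), the sentence ``one gets a local Bézout estimate of the shape contact with $Y\le$ (contact with $V(P)$) $\times$ (local degree of $Y$ at $p$)'' is a restatement of the claim, not an argument: with your definition $\mult_{\varphi}(Y)$ is the contact order of $\varphi$ with one specific polynomial of degree $\deg Y$ (a Chow form pulled back through $\pi$), and comparing it with an \emph{arbitrary} $P$ vanishing on $Y$, with the Samuel multiplicity as the exact correction factor, is precisely the nontrivial content. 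For (5), the only substantive step you give is $\ord_0(v(Q)\circ\varphi)=\ord_0(Q\circ\varphi)$ when the latter is $\ge 1$ --- which is correct and is indeed where the differential equation enters --- but the passage from this to $\mult_{\varphi}(Y)\le\mult_{\varphi}(Y\cdot V(v(P)))$ requires knowing how your projection-defined multiplicity behaves under intersection with a hypersurface having high contact with $\varphi$, and under switching from a generic projection to $\AA^{d+1}$ (for $Y$) to one to $\AA^{d}$ (for $Y\cdot V(v(P))$); ``tracking the contact order through the Chow forms'' names the difficulty rather than resolving it. This is exactly the part of Binyamini's paper that requires genuine analytic intersection theory, and it is absent here.

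For (6), your qualitative finiteness argument is fine: if $v^{n}(P)$ vanished on $Y$ for all $n$, the zero locus of the $v$-stable ideal generated by the $v^{n}(P)$ would be a proper ($P\neq 0$) $v$-invariant subvariety containing $Y$, contradicting the hypothesis. But, as you say yourself, the whole point of (6) is the uniform bound $n_0$, independent of $Y$ and in particular of the unbounded degree $d=\deg P$; neither of your two proposed strategies is carried out, and the obvious Noetherian or parameter-space arguments fail precisely because $d$ is unbounded. Since the proof of Theorem \ref{thm:zerolemma} uses $n_0$ through the degree bound $\deg Z^{k+1}\le (d+c_0)\deg Z^{k}$, this cannot be waived as a technicality. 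In sum, none of (4), (5), (6) is actually established in your proposal; to complete it you would either have to develop the required local multiplicity theory for your projection-based definition from scratch, or simply work within Binyamini's framework as the paper does by citation.
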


Let us now sketch Binyamini's argument assuming the above lemma.

\begin{proof}[Proof of Theorem \ref{thm:zerolemma}]
  Let $P \in \CC[x_1,\ldots,x_n]\setminus\{0\}$ be a polynomial of degree $d \ge 1$. We want to show that $\ord_0(P\circ \varphi)\le C d^n$ for some constant $C>0$ not depending on $d$ or $P$.

  Set $Z^1 = V(P)\cap X$. The idea is to construct, by induction, cycles $Z^k$ of codimension $k$, for $2\le k \le n$, satisfying
  \begin{enumerate}[(i)]
  \item $\mult_{\varphi}(Z^k) \le \mult_{\varphi}(Z^{k+1}) + c\deg Z^k$, where $c$ is the constant of the $D$-property, and
    \item for every $1\le k \le n-1$, $\deg Z^{k+1}\le (d + c_0)\deg Z^k$, for some constants $c_0,c_1>0$ (not depending on $d$ or $P$).
    \end{enumerate}
    Once this is done, we have:
    \begin{align*}
      \ord_0(P \circ  \varphi) &= \mult_{\varphi}(Z^1)\\
                               &\le \mult_{\varphi}(Z^2) + c\deg Z^1\\
                               &\le \mult_{\varphi}(Z^3) + c(\deg Z^2 + \deg Z^1)\\
                               &\vdots\\
                               &\le \mult_{\varphi}(\underbrace{Z^n}_{0-cycle}) + c(\deg Z^{n-1} + \cdots + \deg Z^1)\\
                               &\le c(\deg Z^n + \deg Z^{n-1} + \cdots + \deg Z^1)\\
                               &\le c((d + c_0)^n + (d+c_0)^{n-1} + \cdots + (d+c_0))\\
      &\le cn(1 + c_0)^nd^n\text{,}
    \end{align*}
    so that we may take $C= cn(1+c_0)^n$.

    Let us now see how the sequence of cycles $Z^k$ is constructed, and where the $D$-property comes in. By induction, suppose that $Z^{k}$ has been constructed; let us construct $Z^{k+1}$. We write
    $$
Z^k = Z^k_n + Z^k_c
$$
where, by definition, the irreducible components of $Z^{k}_c$ are those of $Z^k$ which are contained in some $v$-invariant subvariety of $X$. Now, write $Z_n^k= \sum_i m_iY_i$, and, for every $i$, let $d_i\ge 1$ be the smallest integer for which there exists a polynomial $P_i \in \CC[x_1,\ldots,x_n]\setminus\{0\}$ of degree $d_i$ vanishing identically on $Y_i$ (note that $d_i\le d$). Let $n_i = \min \{n \mid v^n(P_i)|_{Y_i}\not\equiv 0\}$.  Since, by definition, $Y_i$ is not contained in a $v$-invariant subvariety, $n_i$ is finite. Then we define
$$
Z^{k+1} = \sum_i m_i Y_i \cdot V(v^{n_i}(P_i))\text{.}
$$
Finally, (i) follows from properties 4 (combined with the $D$-property) and 5, and (ii) follows from 6.
\end{proof}

\begin{exo}
Write down a complete proof for the Zero Lemma in dimension 2.
\end{exo}

\subsection{Mahler's theorem and the $D$-property for the Ramanujan equations}

We shall now study the foliation induced by the Ramanujan equations and prove the corresponding $D$-property.

We start with Mahler's theorem asserting that the $j$-invariant satisfies no algebraic differential equation of second order or lower (see \cite{mahler69}). More precisely, if we denote
$$
\theta = \frac{1}{2\pi i}\frac{d}{d\tau}\text{,}
$$
Mahler proved that the holomorphic functions on $\mathbf{H}$
$$
\tau,e^{2\pi i \tau}, j(\tau), \theta j(\tau), \theta^2j(\tau)
$$
are algebraically independent over $\CC$.

\begin{lemma}
  We have
  $$
\QQ(j, \theta j, \theta^2j) = \QQ(E_2,E_4,E_6)\text{.}
  $$
\end{lemma}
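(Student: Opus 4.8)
The plan is to compute $\theta j$ and $\theta^2 j$ explicitly as rational expressions in $E_2, E_4, E_6$ using the Ramanujan equations (\ref{rameq}), and then to invert these formulas. Throughout I use the classical identities $\Delta = (E_4^3 - E_6^2)/1728$, $j = E_4^3/\Delta$, and $j - 1728 = E_6^2/\Delta$ (the last one immediate from $1728\Delta = E_4^3 - E_6^2$), all understood as identities of meromorphic functions on $\mathbf{H}$, together with the fact that $\Delta$ is nowhere vanishing.

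First I would record the well-known consequence $\theta\Delta = E_2\Delta$ of (\ref{rameq}): differentiating $1728\Delta = E_4^3 - E_6^2$ and substituting $\theta E_4 = (E_2E_4 - E_6)/3$ and $\theta E_6 = (E_2E_6 - E_4^2)/2$, the $E_4^2E_6$-terms cancel, leaving $E_2(E_4^3 - E_6^2)$. Applying the quotient rule to $j = E_4^3/\Delta$, using $\theta\Delta = E_2\Delta$, the $E_2$-terms cancel and one gets
\[
\theta j = -\frac{E_4^2 E_6}{\Delta}.
\]
Differentiating once more and substituting (\ref{rameq}) again — the only genuine computation in the argument — yields a formula of the shape
\[
\theta^2 j = \tfrac16\, E_2\,\theta j + \frac{1}{\Delta}\Bigl(\tfrac23 E_4 E_6^2 + \tfrac12 E_4^4\Bigr).
\]
In particular $j$, $\theta j$, $\theta^2 j$ all lie in $\QQ(E_2, E_4, E_6)$, which gives the inclusion $\QQ(j,\theta j,\theta^2 j) \subseteq \QQ(E_2,E_4,E_6)$.

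For the reverse inclusion I would invert the above. From $\theta j = -E_4^2E_6/\Delta$ together with $j = E_4^3/\Delta$ and $j - 1728 = E_6^2/\Delta$ one computes directly
\[
\frac{(\theta j)^2}{j(j-1728)} = E_4, \qquad -\frac{(\theta j)^3}{j^2(j-1728)} = E_6,
\]
so that $E_4$, $E_6$, and hence $\Delta = E_4^3/j$, all lie in $\QQ(j,\theta j)$; here one only needs that $j$ and $j - 1728$ are not identically zero on $\mathbf{H}$, which is clear. Finally, since $\theta j = -E_4^2E_6/\Delta$ is not identically zero, the displayed formula for $\theta^2 j$ can be solved for $E_2$, exhibiting $E_2 \in \QQ(j,\theta j,\theta^2 j)$. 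Combining the two inclusions gives the claimed equality.

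Everything here is elementary manipulation of the Ramanujan system; no functional-transcendence input is needed. The only mildly delicate point is the bookkeeping of coefficients in the computation of $\theta^2 j$, and in particular checking that the coefficient of $E_2$ there is a \emph{nonzero} multiple of $\theta j$, so that $E_2$ can actually be recovered — this is where the main chance of error lies.
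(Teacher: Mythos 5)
Your proposal is correct and follows essentially the same route as the paper: both inclusions rest on the identities $j=E_4^3/\Delta$, $j-1728=E_6^2/\Delta$, $\theta\Delta=E_2\Delta$ and the Ramanujan equations, and your expressions $(\theta j)^2/\bigl(j(j-1728)\bigr)=E_4$ and $-(\theta j)^3/\bigl(j^2(j-1728)\bigr)=E_6$ are exactly the paper's formulas $E_4=\theta\log j\cdot\theta\log(j-1728)$ and $E_6=-(\theta\log j)^2\cdot\theta\log(j-1728)$ written out. The only (minor and harmless) divergence is the last step: you recover $E_2$ by solving your explicit identity $\theta^2 j=\tfrac16 E_2\,\theta j+\tfrac1\Delta\bigl(\tfrac23E_4E_6^2+\tfrac12E_4^4\bigr)$ --- whose coefficients are correct, including the crucial nonvanishing coefficient $\tfrac16\theta j$ of $E_2$ --- whereas the paper writes $E_2=\theta\log\Delta=3\,\theta\log E_4-\theta\log j\in\QQ(j,\theta j,\theta^2 j)$.
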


\begin{proof}
  Since $j \in \QQ(E_4,E_6)$, it follows immediately from Ramanujan's equations that $\QQ(j, \theta j, \theta^2j) \subset \QQ(E_2,E_4,E_6)$. Explicitly:
  $$
j = 1728\frac{E_4^3}{E_4^3-E_6^2}\text{, } \theta j = -1728\frac{E_4^2E_6}{E_4^3-E_6^2}\text{, } \theta^2j = 288 \frac{-E_2E_4^2E_6 + 4E_4E_6^2+3E_4^4}{E_4^3-E_6^2}\text{.}
$$
The above formulas can be inverted. Recall that $\Delta = \frac{1}{1728}(E_4^3-E_6^2)$ and that $\theta \log \Delta = E_2$ (this follows from Ramanujan's equations). Writing $j = E_4^3/\Delta$ and $j-1728 = E_6^2/\Delta$, and using the Ramanujan equations, we get
$$
\theta \log j = -\frac{E_6}{E_4}\text{, } \theta \log (j-1728) = - \frac{E_4^2}{E_6}
$$
so that
$$
E_4 = \theta \log j\cdot \theta \log (j-1728)\text{, }E_6 = - (\theta \log j)^2 \cdot \theta \log (j-1728) \in \QQ(j,\theta j)\text{.}
$$
Finally,
$$
E_2 = \theta \log \Delta = 3 \theta \log E_4 - \theta \log j \in \QQ(j,\theta j, \theta^2j)\text{.}
$$
\end{proof}

It follows from the above lemma that Mahler's theorem is equivalent to the following statement.

\begin{theorem}\label{thm:mahler}
  The holomorphic functions on $\mathbf{H}$
  $$
\tau, e^{2\pi i \tau}, E_2(\tau), E_4(\tau), E_6(\tau)
$$
are algebraically independent over $\CC$.
\end{theorem}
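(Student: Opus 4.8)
The plan is to reduce the statement to Mahler's theorem --- that is, to the algebraic independence over $\CC$ of the five holomorphic functions $\tau$, $q \defeq e^{2\pi i\tau}$, $j$, $\theta j$, $\theta^2 j$ on $\mathbf{H}$ --- by transporting algebraic independence along the field identity supplied by the preceding lemma.

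First I would upgrade that lemma from $\QQ$- to $\CC$-coefficients, which is immediate since the explicit rational formulas expressing $j,\theta j,\theta^2 j$ in terms of $E_2,E_4,E_6$, and conversely $E_2,E_4,E_6$ in terms of $j,\theta j,\theta^2 j$, all have coefficients in $\QQ$; hence, inside the field of meromorphic functions on $\mathbf{H}$,
$$
\CC(j,\theta j,\theta^2 j) = \CC(E_2,E_4,E_6)\text{.}
$$
Adjoining the remaining two functions $\tau$ and $q = e^{2\pi i\tau}$ to both sides gives
$$
\CC(\tau,q,j,\theta j,\theta^2 j) = \CC(\tau,q,E_2,E_4,E_6)\text{.}
$$
Since the transcendence degree over $\CC$ is an invariant of the field extension, Mahler's theorem yields
$$
\trdeg_{\CC}\CC(\tau,q,E_2,E_4,E_6) = \trdeg_{\CC}\CC(\tau,q,j,\theta j,\theta^2 j) = 5\text{.}
$$
Finally, a five-element generating set of an extension of transcendence degree $5$ is necessarily algebraically independent: any nontrivial polynomial relation among the generators would force the transcendence degree down to at most $4$. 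Therefore $\tau,q,E_2,E_4,E_6$ are algebraically independent over $\CC$.

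The point I expect to require the most thought is, at the level of this deduction, essentially nil --- all the genuine content sits in Mahler's theorem, which is taken here as an input. It is worth recording where the actual difficulty in Mahler's theorem lies: the bound $\trdeg_{\CC}\le 5$ is automatic (it is the number of generators), and in fact the differential equations $\theta q = q$ together with the third-order Schwarzian equation satisfied by $j$ --- which shows $\theta^3 j \in \QQ(j,\theta j,\theta^2 j)$ --- already force the $\theta$-differential field generated over $\CC$ by $\tau,q,j$ to have transcendence degree at most $5$. Hence Mahler's genuine assertion is the nonexistence of \emph{any} algebraic relation between these five functions, equivalently that $j$ satisfies no algebraic differential equation of order $\le 2$. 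Since this is exactly the statement that feeds into the verification of Nesterenko's $D$-property in the next subsection, invoking it at this point introduces no circularity.
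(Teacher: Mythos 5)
Your reduction is logically sound: the lemma's explicit formulas have rational coefficients, so the identity $\CC(j,\theta j,\theta^2 j)=\CC(E_2,E_4,E_6)$ holds inside the field of meromorphic functions on $\mathbf{H}$, adjoining $\tau$ and $q$ preserves it, and a five-element generating set of a field of transcendence degree $5$ over $\CC$ is algebraically independent; and since Mahler's theorem \cite{mahler69} is an independent, external result, no circularity is introduced. However, this is a genuinely different route from the paper's. The text already records, immediately after the lemma, that the theorem is \emph{equivalent} to Mahler's theorem, so your argument is essentially that equivalence read in one direction with Mahler invoked as a black box; the paper then deliberately gives a proof ``different from Mahler's'': a self-contained geometric argument showing that the image of $\tau\mapsto(\tau,e^{2\pi i\tau},E_2,E_4,E_6)$ is Zariski-dense in $\AA^2_{\CC}\times T$, by fibring over the $j$-line via $p=\pi\circ\pr_2$, using the almost-torsor structure of $T$ under the group $G$ of upper triangular matrices (Lemma \ref{lemma:finsurj}) together with the quasimodular transformation laws of $E_2,E_4,E_6$ under $\SL_2(\ZZ)$, and finally reducing density in each fibre to the elementary fact that $\{(e^{2\pi i a/c},c)\}$ is Zariski-dense in $\AA^2_{\CC}$ (roots of unity at prime denominators). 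What your approach buys is brevity, at the cost of outsourcing all the content to Mahler's original paper; what the paper's approach buys is a new, self-contained proof (hence, via the lemma, a reproof of Mahler's theorem itself) and, more importantly for these notes, the geometric setup --- the variety $T$, the $G$-action, Lemma \ref{lemma:finsurj}, and the moduli interpretation of Remark \ref{rmk:motivationhre} --- which is reused immediately afterwards in Corollary \ref{coro:every-leaf} and in the verification of Nesterenko's $D$-property for the Ramanujan vector field.
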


Our proof is different from Mahler's, but it is still fairly elementary. It relies on the following simple geometric considerations (see Remark \ref{rmk:motivationhre} below for the original motivation). Let $T$ be the open affine subscheme of $\AA^3_{\CC} = \Spec \CC[t_1,t_2,t_3]$ where $t_2^3-t_3^2 \neq 0$ and consider the surjective map
\begin{align*}
  \pi : T \to \AA^1_{\CC}\text{, }    \qquad    (t_1,t_2,t_3) \mapsto 1728 \frac{t_2^3}{t_2^3-t_3^2}\text{.}
\end{align*}
If $G$ denotes the subgroup scheme of $\SL_{2,\CC}$ of upper triangular matrices, so that
$$
G(\CC) = \left\{\left.\left(\begin{array}{cc}
                        x^{-1} & y \\
                        0 & x
                      \end{array}\right)\in {\SL}_2(\CC) \right| x \in \CC^{\times}, y \in \CC\right\}\text{,}
$$
then $G$ acts on $T$ by
$$
(t_1,t_2,t_3)\cdot \left(\begin{array}{cc} x^{-1} & y \\
                         0 & x\end{array}\right) = (-12xy + x^{2}t_1, x^{4}t_2, x^{6}t_3)\text{.}
$$
This action clearly preserves the fibres of $\pi$. In fact, $T$ is `almost' a $G$-torsor over $\AA^1_{\CC}$.

\begin{lemma}\label{lemma:finsurj}
  For every $z \in \CC$ and $t \in T(\CC)$ such that $\pi(t) = z$, the morphism
  \begin{align*}
    G \to \pi^{-1}(z)\text{, }\qquad    g \mapsto t\cdot g
  \end{align*}
  is finite and surjective.
\end{lemma}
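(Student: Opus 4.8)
\emph{Proof strategy.} The plan is to study the orbit morphism $f\colon G \to \pi^{-1}(z)$, $g \mapsto t\cdot g$, directly in coordinates. Write $t=(a_1,a_2,a_3)$; since $t\in T(\CC)$ we have $a_2^3-a_3^2\in\CC^\times$, and $\pi(t)=z$ reads $1728\,a_2^3=z(a_2^3-a_3^2)$. Note that $f$ really lands in the fibre $\pi^{-1}(z)$, since the $G$-action preserves the fibres of $\pi$ (equivalently, $f^*(\pi^*s)=1728\,a_2^3/(a_2^3-a_3^2)=z$, where $s$ is the coordinate on $\AA^1_{\CC}$).

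For \emph{finiteness}, the key computation is that $f^*(t_2^3-t_3^2)=(a_2x^4)^3-(a_3x^6)^2=(a_2^3-a_3^2)\,x^{12}$ in $\mathcal{O}(G)=\CC[x,x^{-1},y]$. Since $t_2^3-t_3^2$ is invertible on $T$, hence on $\pi^{-1}(z)$, and $a_2^3-a_3^2$ is a nonzero scalar, both $x^{12}$ and $x^{-12}$ lie in the image subring $A\defeq\im\!\big(f^*\colon\mathcal{O}(\pi^{-1}(z))\to\mathcal{O}(G)\big)$. Hence $x$ is integral over $A$; moreover $f^*t_1=-12xy+a_1x^2\in A$ and $x^{-1}=x^{11}\cdot x^{-12}\in A[x]$, so $y=\tfrac{1}{12}\big(a_1x-(f^*t_1)\,x^{-1}\big)\in A[x]$. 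Therefore $\mathcal{O}(G)=A[x]$, which is a finite $A$-module (on the generators $1,x,\dots,x^{11}$), a fortiori a finite $\mathcal{O}(\pi^{-1}(z))$-module; so $f$ is finite. This part of the argument is uniform in $z$.

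For \emph{surjectivity} I would work on $\CC$-points. Given $t'=(a_1',a_2',a_3')\in\pi^{-1}(z)(\CC)$, first solve for $x\in\CC^\times$ with $x^4a_2=a_2'$ and $x^6a_3=a_3'$. When $z\notin\{0,1728\}$ the numbers $a_2,a_3,a_2',a_3'$ are all nonzero, and the equality $a_2^3/(a_2^3-a_3^2)=(a_2')^3/((a_2')^3-(a_3')^2)$ forces $(a_2'/a_2)^3=(a_3'/a_3)^2$; choosing a fourth root $x_0$ of $a_2'/a_2$ one gets $a_3'/a_3=\pm x_0^6$, and $x=x_0$ or $x=\sqrt{-1}\,x_0$ does the job. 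In the degenerate cases $z=0$ (so $a_2=a_2'=0$, $a_3,a_3'\neq 0$) and $z=1728$ (so $a_3=a_3'=0$, $a_2,a_2'\neq 0$) one simply takes a sixth, resp.\ fourth, root. With $x$ fixed, $y=(a_1x^2-a_1')/(12x)$ then gives $t\cdot g=t'$. Thus every closed point of $\pi^{-1}(z)$ lies in the image of $f$; since $f$ is finite its image is closed, and closed points are dense, so $f$ is surjective.

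The only real subtlety is that $\pi^{-1}(z)$ is non-reduced when $z\in\{0,1728\}$ (the fibre being cut out by $t_2^3$, resp.\ $t_3^2$, up to a unit). This does not affect the finiteness argument, and surjectivity only concerns the underlying topological space, so it causes no trouble — I would merely flag it in a remark. Alternatively, one can bypass the case analysis in the surjectivity step by a dimension count: finiteness of $f$ gives $\overline{f(G)}$ irreducible of dimension $\dim G=2$, while $\pi^{-1}(z)$ is irreducible of dimension $2$ for every $z$ (its reduction is an irreducible hypersurface in $T$), whence $f(G)=\pi^{-1}(z)$.
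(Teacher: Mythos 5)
Your proof is correct, and there is nothing in the paper to compare it against: the author leaves Lemma \ref{lemma:finsurj} as an exercise, so your argument supplies exactly what is omitted. The coordinate computation is sound on both counts. For finiteness, pulling back the unit $t_2^3-t_3^2$ to get $x^{\pm 12}$ in the image subring, and then recovering $y$ from $f^*t_1=-12xy+a_1x^2$, does show $\mathcal{O}(G)$ is generated as a module by $1,x,\dots,x^{11}$ over the image of $\mathcal{O}(\pi^{-1}(z))$; and since both schemes are affine this is precisely finiteness. For surjectivity, your case analysis at $z=0$ and $z=1728$ is the right one (these are exactly the fibres where $a_2$ or $a_3$ vanishes, and where the scheme-theoretic fibre is non-reduced, cut out by $t_2^3$ resp.\ $t_3^2$), and passing from closed points to surjectivity via closedness of the image of a finite morphism is fine; the alternative dimension count also works, since each fibre has irreducible two-dimensional reduction. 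You also rightly checked that the orbit map factors through the scheme-theoretic fibre via $f^*(\pi^*s)=z$. The only remark worth adding is contextual: Remark \ref{rmk:motivationhre} indicates the structural reason behind the lemma (namely that $T$ is a genuine $G$-torsor over the moduli stack $\mathcal{M}_{1,1}$, and the coarse map $\mathcal{M}_{1,1}\to\AA^1_{\CC}$ degrades this to ``finite and surjective'' on fibres, with the extra automorphisms at $j=0,1728$ accounting for your two special cases), but your hands-on verification is complete as it stands.
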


\begin{proof}
Exercise.
\end{proof}

We are now ready for our proof.

\begin{proof}
 Set $X = \AA^2_{\CC}\times T$, and denote $p = \pi \circ \pr_2 : X \to \AA^1_{\CC}$. We must prove that the image of the holomorphic map
  \begin{align*}
    \varphi : \mathbf{H} \to X(\CC)\text{, } \qquad   \tau \mapsto (\tau, e^{2\pi i \tau}, E_2(\tau), E_4(\tau), E_6(\tau))
  \end{align*}
  is Zariski-dense in $X$. Note that $\varphi$ is well defined since Ramanujan's $\Delta$ function
  $$
\Delta(\tau) = \frac{E_4(\tau)^3- E_6(\tau)^2}{1728}
$$
never vanishes on $\mathbf{H}$; moreover, $(p\circ \varphi)(\tau) = j(\tau)$ for every $\tau \in \mathbf{H}$.

Since $p$ and $j$ are surjective, it suffices to prove that the image of $\varphi$ is Zariski-dense in every fibre of $p$. It follows from Lemma \ref{lemma:finsurj} that, for every $\tau \in \mathbf{H}$, the map
\begin{align*}
  f_{\tau}: \AA^2_{\CC}\times G \to p^{-1}(j(\tau))\text{, }  \qquad  (a,b,g)\mapsto (a,b, (E_2(\tau),E_4(\tau),E_6(\tau))\cdot g)
\end{align*}
is finite and surjective, so that $\varphi(\mathbf{H})\cap p^{-1}(j(\tau))$ is Zariski-dense in $p^{-1}(j(\tau))$ if and only if $f_{\tau}^{-1}(\varphi(\mathbf{H})\cap p^{-1}(j(\tau)))$ is Zariski-dense in $\AA^2_{\CC}\times G$.

Using the (quasi)modularity of $E_2$, $E_4$, and $E_6$, one easily verifies that $f_{\tau}^{-1}(\varphi(\mathbf{H})\cap p^{-1}(j(\tau)))$ contains the set
$$
S_{\tau} = \left\{(\gamma \cdot \tau, e^{2\pi i \gamma \cdot \tau}, g_{\gamma,\tau}) \in \CC^2\times G(\CC) \left| \gamma = \left(\begin{array}{cc} a & b \\ c & d \end{array} \right) \in {\SL}_2(\ZZ)\text{, } c\tau+ d \neq 0 \right\}\right.
$$
where
$$
g_{\gamma,\tau} = \left(\begin{array}{cc}
                          (c\tau + d)^{-1} & - c/2\pi i \\
                          0 & c\tau+d
                        \end{array}\right) \in G(\CC)\text{,}
                      $$
                      so that it suffices to prove that $S_{\tau}$ is Zariski-dense in $\AA^2_{\CC}\times G$ for any $\tau \in \mathbf{H}$.

As a first reduction, observe that it suffices to prove that the set
$$
\{(e^{2\pi i \gamma \cdot \tau}, g_{\gamma,\tau}) \in \CC \times G(\CC) \mid \gamma \in {\SL}_{2}(\ZZ)\text{, }c\tau + d \neq 0\}                     
$$
is Zariski-dense in $\AA^1_{\CC}\times G$. Indeed, if we denote ${}_n\gamma = \left(\begin{array}{cc}1 & n \\ 0 & 1 \end{array} \right)\cdot \gamma$ for every $\gamma \in \SL_{2}(\ZZ)$ and $n\in \ZZ$, then
$$
(\gamma_n\cdot \tau,e^{2\pi i {}_n\gamma \cdot \tau}, g_{{}_n\gamma,\tau}) = (\gamma\cdot \tau + n, e^{2\pi i \gamma \cdot \tau}, g_{\gamma,\tau})
$$
and our claim follows from the Zariski-density of $\ZZ\subset \CC$ in $\AA^1_{\CC}$.

We now perform a second reduction: it suffices to prove that the set
$$
\{(e^{2\pi i \frac{a}{c}},c) \in \CC^2 \mid (a,c)\in \ZZ^2\text{, }\text{gcd}(a,c)=1\}
$$
is Zariski-dense in $\AA_{\CC}^2$. Indeed, let $P \in \CC[t,x,y]\setminus\{0\}$ be such that
$$
P(e^{2\pi i \gamma \tau}, g_{\gamma,\tau}) = P(e^{2\pi i \gamma \tau}, c\tau + d, -c/2\pi i) = 0
$$
for every $\gamma \in \SL_{2}(\ZZ)$. Writing $P = \sum_{j=0}^NP_j(t,y)x^j$, with $P_{N} \neq 0$, we obtain that
$$
\sum_{i=0}^NP_j(e^{2\pi i \gamma_n\cdot \tau}, - c/2\pi i)(c\tau + d + cn)^j =0
$$
for every $\gamma \in \SL_{2}(\ZZ)$ and $n \in \ZZ$, where $\gamma_n = \gamma\cdot  \left(\begin{array}{cc}1 & n \\ 0 & 1 \end{array} \right)$. We can assume $c\neq 0$. Multiplying the above equation by $(c\tau+d + cn)^{-N}$ and letting $n \to +\infty$, we obtain
$$
P_N(e^{2\pi i \frac{a}{c}}, -c/2\pi i) =0
$$
for every $\gamma \in \SL_{2}(\ZZ)$, and our claim follows.

Finally, suppose that there exists a polynomial $P(x,y) = \sum_{j=0}^NP_j(y)x^j \in \CC[x,y]$, such that $P(e^{2\pi i \frac{a}{c}},c)=0$ for every $(a,c)\in \ZZ^2$ with $\text{gcd}(a,c)=1$. Taking, for instance, $c$ to be a prime  $p\ge N+1$, we see that $P(x,p)$ is a polynomial of degree $N$ having at least $N+1$ roots: $1,e^{2\pi i \frac{1}{p}},\ldots,e^{2\pi i\frac{p-1}{p}}$, so that $P(x,p)=0$. As there are infinitely many prime numbers greater than $N+1$, we conclude that $P=0$.   
\end{proof}

\begin{obs}\label{rmk:motivationhre}
  The affine space $T$ can be identified to the moduli space of isomorphism classes of complex elliptic curves $E$ endowed with a basis $b=(\omega,\eta)$ of $H^1_{\dR}(E)$ such that $\omega \in H^0(E,\Omega^1_{X/\CC})$ and $\langle \omega, \eta \rangle=1$ (where $\langle \ , \ \rangle $ denotes the cup product on algebraic de Rham cohomology) in a way that the holomorphic map $\tau\mapsto (E_2(\tau),E_4(\tau),E_6(\tau))$ corresponds to
\begin{align*}
   \varphi: \mathbf{H} \to T(\CC)\text{, }\qquad \tau \mapsto [(\CC/\ZZ + \tau \ZZ,(\omega_{\tau},\eta_{\tau})]
\end{align*}
where $\omega_{\tau}=2\pi i dz$ and $\eta_{\tau}= \frac{1}{2\pi i}\wp_{\tau}(z)dz - \frac{E_{2}(\tau)}{12}2\pi i dz$ (cf. \cite{fonseca18} Section 8). Under such moduli-theoretic interpretation, the group scheme $G$ acts by right multiplication on the basis $b$ seen as a row vector. Note that $T$ admits a natural map to the moduli stack of complex elliptic curves $\mathcal{M}_{1,1}$ and that $T$ is a \emph{bona fide} $G$-torsor over $\mathcal{M}_{1,1}$. In the above proof, we replaced $\mathcal{M}_{1,1}$ by its coarse moduli scheme $\AA^1_{\CC}$, at the cost of weakening this `torsor property' (cf. Lemma \ref{lemma:finsurj}). 
\end{obs}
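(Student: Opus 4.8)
The plan is to realise the claimed moduli space as the functor of points of a scheme, identify that scheme with $T$ by an explicit Weierstrass construction, and then read off the three assertions of the remark (the formula for $\varphi$, the $G$-action, the torsor property) from the construction. First I would package the moduli problem. Let $\mathcal{T}$ be the functor on $\CC$-schemes sending $S$ to the set of isomorphism classes of pairs $(E,b)$, where $E\to S$ is an elliptic curve and $b=(\omega,\eta)$ is a basis of $H^1_{\dR}(E/S)$ with $\omega$ a generator of the Hodge subbundle and $\langle\omega,\eta\rangle=1$. One first observes that such a pair has no non-trivial automorphisms (an automorphism of $E$ fixing $\omega$ acts trivially on the invariant differentials, hence, being of finite order, is the identity), so $\mathcal{T}$ is a sheaf with no stacky behaviour. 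To represent it, use the Weierstrass normal form: over any $\CC$-scheme a pair $(E,\omega)$ is uniquely of the form $y^2=4x^3-g_2x-g_3$ with $\omega=dx/y$, so $(E,\omega)$ is classified by $(g_2,g_3)$ with $g_2^3-27g_3^2$ invertible; and the Hodge exact sequence $0\to H^0(E/S,\Omega^1)\to H^1_{\dR}(E/S)\to H^1(E/S,\mathcal{O}_E)\to 0$ together with the pairing shows that, for fixed $(E,\omega)$, the valid $\eta$ form a torsor under $H^0(E,\Omega^1)=\CC\,\omega$, explicitly $\eta=\eta_0+s\,\omega$ where $\eta_0=x\,dx/y$ is the Weierstrass second-kind differential and $s\in\CC$. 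Thus $\mathcal{T}$ is an $\AA^1$-bundle over $\{g_2^3-27g_3^2\neq0\}$; after rescaling $g_2,g_3$ by fixed non-zero constants — chosen so that $g_2^3-27g_3^2$ becomes $t_2^3-t_3^2$ and the modular invariant $1728\,g_2^3/(g_2^3-27g_3^2)$ becomes $\pi$ — and after the affine change $t_1=-12s$ of the fibre coordinate, one gets an isomorphism $\mathcal{T}\cong T$ carrying $(E,b)\mapsto E\mapsto j(E)$ to $p=\pi\circ\pr_2$.

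Second I would identify the analytic family. For $\tau\in\mathbf{H}$ put $E_\tau=\CC/(\ZZ+\tau\ZZ)$, so that $H^1_{\dR}(E_\tau)$ is spanned by the classes of $dz$ and $\wp_\tau(z)\,dz$. Taking $\omega_\tau=2\pi i\,dz$ amounts to putting $E_\tau$ in Weierstrass form with period lattice $2\pi i(\ZZ+\tau\ZZ)$, and the classical formulas $g_2=60G_4$, $g_3=140G_6$ with $G_4\propto E_4$, $G_6\propto E_6$ show that the invariants of this normalised curve are the fixed constants above times $E_4(\tau)$ and $E_6(\tau)$; so the last two coordinates of $\varphi(\tau)$ are indeed $(E_4(\tau),E_6(\tau))$, whatever $\eta$ is. A short computation with the homogeneity of $\wp$ identifies the second-kind differential $\eta_0$ of this model with $\tfrac{1}{2\pi i}\wp_\tau(z)\,dz$, and the Legendre relation, against the normalisation of the cup product and the trace, checks that $\langle\omega_\tau,\eta_0\rangle=1$. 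Finally, the value of the quasi-period $\eta_1(\tau)=\tfrac{\pi^2}{3}E_2(\tau)$ shows that the element $\eta_\tau$ whose $t_1$-coordinate is $E_2(\tau)$, i.e. whose $s$-coordinate is $-E_2(\tau)/12$, is exactly $\eta_\tau=\eta_0-\tfrac{E_2(\tau)}{12}\,\omega_\tau=\tfrac{1}{2\pi i}\wp_\tau(z)\,dz-\tfrac{E_2(\tau)}{12}\,2\pi i\,dz$, so that $\varphi(\tau)=[(E_\tau,(\omega_\tau,\eta_\tau))]$. The bookkeeping of these constants is carried out in \cite{fonseca18}, Section 8.

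Third, the $G$-action and the torsor property: $\SL_2$ acts on the row vector $b=(\omega,\eta)$ by right multiplication, preserving the cup product symplectic form (determinant $1$), and the subgroup that moreover keeps the first entry a generator of the Hodge line is precisely the Borel $G$ of upper triangular matrices; since $(\omega,\eta)\cdot\begin{pmatrix}x^{-1}&y\\0&x\end{pmatrix}=(x^{-1}\omega,\,y\omega+x\eta)$, and since $(g_2,g_3)$ transform by $(\lambda^{-4},\lambda^{-6})$ under $\omega\mapsto\lambda\omega$ while $\eta_0\mapsto x\,\eta_0$ under $\omega\mapsto x^{-1}\omega$, one computes that this induces exactly $(t_1,t_2,t_3)\cdot g=(-12xy+x^2t_1,\,x^4t_2,\,x^6t_3)$ — the factor $-12$ being the same rescaling as in $t_1=-12s$. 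For a fixed $E$ the normalised bases form a $G(\CC)$-torsor by the same linear algebra (a choice of $\omega$ up to $\GG_m$, then of $\eta$ up to $\GG_a$, matching the decomposition of $G$), and the argument runs in families, so $T=\mathcal{T}\to\mathcal{M}_{1,1}$ (forgetting $b$) is a $G$-torsor; in particular $\mathcal{T}$ is a scheme because $b$ rigidifies. Over the coarse space $\AA^1=\Spec\CC[j]$ this degenerates: the fibre over a curve $E$ is the set of normalised bases modulo $\Aut(E)$, and since $-1\in\Aut(E)$ always acts on $b$ through the element $-I\in G$, this fibre is a quotient $G/\langle M_\alpha:\alpha\in\Aut(E)\rangle$ — still finite and surjective over a point, which is exactly Lemma \ref{lemma:finsurj}, but no longer a torsor.

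\textbf{Main obstacle.} Everything but one step is either formal (the moduli packaging, the torsor argument, the passage to the coarse space) or a routine Weierstrass and homogeneity computation. The delicate point is the constant-chasing in the second step: fixing the normalisation of the cup product and the trace against the Legendre relation, using $\eta_1(\tau)=\tfrac{\pi^2}{3}E_2(\tau)$, and pinning the zero section and the scale of the $t_1$-coordinate so that the correction term comes out as exactly $-\tfrac{E_2}{12}\,2\pi i\,dz$ and the $G$-action picks up the coefficient $-12$. This is elementary but fiddly, and I would defer the details to \cite{fonseca18}.
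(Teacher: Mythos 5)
The paper offers no proof of this remark---it is justified only by the citation of \cite{fonseca18}, Section 8---and your reconstruction is correct and follows the intended route: rigidify via the Weierstrass normal form for $(E,\omega)$, parametrise the admissible $\eta$ as $\eta_0+s\,\omega$ with $\eta_0=x\,dx/y$, and fix the constants so that $(t_1,t_2,t_3)=(-12s,\,12g_2,\,-216g_3)$, which is consistent both with $g_2\bigl(2\pi i(\ZZ+\tau\ZZ)\bigr)=E_4(\tau)/12$, $g_3=-E_6(\tau)/216$, $\eta_0=\frac{1}{2\pi i}\wp_\tau(z)\,dz$, and with the paper's action $(t_1,t_2,t_3)\cdot g=(-12xy+x^2t_1,\,x^4t_2,\,x^6t_3)$ via $s\mapsto xy+x^2s$. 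Your last paragraph (fibres over the coarse space $\AA^1_{\CC}$ are quotients of $G$ by the finite image of $\Aut(E)$, hence finite and surjective but no longer a torsor) likewise matches what Lemma \ref{lemma:finsurj} records, so apart from the sign and trace normalisations you explicitly defer, nothing is missing.
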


\begin{coro}\label{coro:every-leaf}
Every leaf of the holomorphic foliation on $T(\CC)$ induced by the vector field
$$
v = \frac{(x_1^2-x_2)}{12}\frac{\partial}{\partial x_1} + \frac{(x_1x_2-x_3)}{3}\frac{\partial}{\partial x_2} + \frac{x_1x_3-x_2^2}{2}\frac{\partial}{\partial x_3}
$$
is Zariski-dense in $T$.
\end{coro}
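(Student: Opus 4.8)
The plan is to reduce the corollary to the algebro-geometric assertion that $T$ has no proper non-empty $v$-invariant closed subvariety, and to establish this by classifying the Darboux polynomials of $v$ and using the invariance of the foliation under a weighted torus action on $T$.

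\emph{Reduction.} The zero locus of $v$ in $\AA^3_\CC$ is the curve $\{(s,s^2,s^3)\mid s\in\CC\}$, which lies on $\{x_2^3=x_3^2\}$ and so is disjoint from $T$; hence $v$ is nowhere vanishing on $T$, the induced foliation is genuinely by curves, and by Lemma~\ref{lemma:closureinvariant} the Zariski closure of any leaf is $v$-invariant. Conversely, if $Y\subseteq T$ is $v$-invariant and $p\in Y$, then the leaf through $p$ stays inside $Y$: for $f$ in the ideal of $Y$ all iterated $v$-derivatives of $f$ lie in that ideal, so every derivative at $0$ of $f$ composed with a local parametrisation of the leaf vanishes. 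Thus ``every leaf is Zariski-dense in $T$'' is equivalent to ``$T$ has no proper non-empty $v$-invariant closed subvariety'', and one may restrict to irreducible $Y$, necessarily of dimension $1$ or $2$ (dimension $0$ is excluded since $v$ has no zero on $T$).

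\emph{Hypersurfaces and curves.} A two-dimensional invariant $Y$ is cut out by an irreducible polynomial $P$ prime to $D\defeq x_2^3-x_3^2$, and $v$-invariance forces $v(P)=cP$ with $c\in\CC[x_1,x_2,x_3]$; since the components of $v$ have degree $\le 2$, a degree count gives $\deg c\le 1$. Grading $\CC[x_1,x_2,x_3]$ by the weights $(2,4,6)$ makes $v$ homogeneous of weighted degree $2$, one has the identity $v(D)=x_1 D$, and Theorem~\ref{thm:mahler} shows that $v$ has no non-constant rational first integral --- more precisely, no non-constant rational function with constant $v$-derivative, since such a function, evaluated along $\varphi$, would exhibit $\tau$ as algebraic over $\CC(E_2,E_4,E_6)$. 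A short induction on the top weighted-homogeneous component of $P$ --- whose leading term is forced to be a power of $D$ --- then shows that $P$ itself is a scalar multiple of a power of $D$; as $V(D)\cap T=\varnothing$, no such $Y$ exists. For a one-dimensional invariant $Y$, use the $\GG_m$-action $\rho_a\colon(x_1,x_2,x_3)\mapsto(a^2x_1,a^4x_2,a^6x_3)$, which satisfies $(\rho_a)_*v=a^{-2}v$ and hence permutes the leaves. The Zariski closure of $\bigcup_{a\in\CC^\times}\rho_a(Y)$ is again $v$-invariant; if it is two-dimensional we are back in the previous case, and otherwise $Y$ is $\GG_m$-invariant, hence the closure of a single $\GG_m$-orbit, so that $v$ is tangent along $Y$ to the Euler vector field $E=2x_1\partial_{x_1}+4x_2\partial_{x_2}+6x_3\partial_{x_3}$. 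But the $\partial_{x_2}\wedge\partial_{x_3}$-component of $v\wedge E$ equals $2(x_2^3-x_3^2)$, which is nowhere zero on $T$ --- a contradiction. Hence $T$ has no proper non-empty $v$-invariant subvariety, and the corollary follows.

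The step I expect to be the real work is the classification of Darboux polynomials of $v$: one must handle every irreducible $P$, of arbitrary degree, with $v(P)$ divisible by $P$, the only leverage being the weighted-homogeneous structure of $v$ (with its one explicit Darboux polynomial $D$) and the transcendence facts of Theorem~\ref{thm:mahler}. The dimension-$0$ case is immediate and the dimension-$1$ case is cheap, being essentially forced by the torus symmetry of the foliation.
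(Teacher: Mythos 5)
Your reduction to the non-existence of proper $v$-invariant subvarieties of $T$ is sound, and your dimension-$0$ and dimension-$1$ cases are fine (the computation that the $\partial_{x_2}\wedge\partial_{x_3}$-coefficient of $v\wedge E$ equals $2(x_2^3-x_3^2)$, nonvanishing on $T$, is correct). But the hypersurface case, which you yourself identify as ``the real work'', is asserted rather than proved, and that is a genuine gap: it is exactly the classification of Darboux polynomials of the Ramanujan field, and the one-line sketch hides all the content. What the weighted grading gives cheaply is only that the cofactor is of the form $\gamma_0+\gamma_1x_1$ and that the top weighted-homogeneous component satisfies $v(P_{\mathrm{top}})=\gamma_1x_1P_{\mathrm{top}}$. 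To conclude you still must (i) show that the only weighted-homogeneous solutions of $v(Q)=\lambda x_1Q$ are scalar multiples of powers of $D=x_2^3-x_3^2$ --- in particular that $\lambda\in\ZZ_{\ge 0}$, without which $Q/D^{\lambda}$ is not a rational function and your ``no rational first integral'' lever (which does follow from Theorem~\ref{thm:mahler}) cannot be applied --- and (ii) kill the lower-weight components and force $\gamma_0=0$. Neither step is automatic from weighted homogeneity plus the absence of first integrals; both can be carried out, but only by bringing in more of the analytic solution, e.g.\ evaluating along $\tau\mapsto(E_2(\tau),E_4(\tau),E_6(\tau))$, using $\theta\log\Delta=E_2$ and the integrality of the $q$-expansion to get $Q\circ\varphi=c\,\Delta^{\lambda}$ with $\lambda\in\ZZ_{\ge0}$, and then Zariski-density (Mahler again) to identify $Q$ with $cD^{\lambda}$, and similarly for the inhomogeneous equations governing the lower components. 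As written, none of this is in the proposal, so the central step is missing even though the statement you need (every irreducible Darboux polynomial of $v$ is, up to scalar, $x_2^3-x_3^2$) is true.

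For comparison, the paper avoids the Darboux classification entirely and its proof is much shorter: using the quasimodular transformation law, it writes down the explicit curves $\varphi_{c,d}(\tau)=\bigl((c\tau+d)^2E_2(\tau)+\tfrac{12c}{2\pi i}(c\tau+d),\,(c\tau+d)^4E_4(\tau),\,(c\tau+d)^6E_6(\tau)\bigr)$, checks they are integral curves of $v$ up to the factor $(c\tau+d)^{-2}$, shows via Lemma~\ref{lemma:finsurj} (the near-torsor structure under the Borel $G$) that every point of $T$ lies on some $\varphi_{c,d}$, and then quotes Theorem~\ref{thm:mahler} to get Zariski-density of each such image; density of every leaf follows at once. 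So the paper trades your algebraic classification problem for an explicit parametrisation of all leaves by the group action, putting the whole transcendence burden on Mahler's theorem alone. If you wish to keep your route, the cleanest repair is the evaluation-along-$\varphi$ argument sketched above; alternatively, note that your route, once completed, proves slightly more than the corollary (the full list of invariant subvarieties of $\AA^3_\CC$ outside $V(x_2^3-x_3^2)$), which is closer to what the paper later extracts from the corollary when proving the $D$-property.
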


\begin{proof}
Let $(c,d) \in \CC^2\setminus\{0\}$ and define
\begin{align*}
\varphi_{c,d}(\tau) = \left((c\tau+d)^2E_2(\tau)+ \frac{12c}{2\pi i}(c\tau +d), (c\tau+d)^4E_4(\tau),(c\tau+d)^6E_6(\tau)\right)\text{.}
\end{align*}
One may easily check that $\varphi_{c,d}$ satisfies the differential equation
$$
\theta\varphi_{c,d} = (c\tau+d)^{-2}v\circ \varphi_{c,d}
$$
so that its image is a leaf of the foliation defined by $v$.

By Theorem \ref{thm:mahler}, the image of each $\varphi_{c,d}$ is Zariski-dense in $T$. Thus, to finish our proof it suffices to show that any point of $T$ lies in the image of $\varphi_{c,d}$ for some $(c,d) \in \CC^2\setminus\{0\}$. Let $t \in T(\CC)$ and choose $\tau \in \mathbf{H}$ such that $\pi(t) = j(\tau)$, so that $t$ and $(E_2(\tau),E_4(\tau),E_6(\tau))$ lie in the same $\pi$-fibre. By Lemma \ref{lemma:finsurj}, there exists $g\in G(\CC)$ such that
$$
t = (E_2(\tau),E_4(\tau),E_6(\tau))\cdot g\text{.}
$$
To conclude, we simply remark that any element of $G$ is of the form
$$
g = \left(\begin{array}{cc}
            (c\tau+d)^{-1} & -c/2\pi i\\
            0 & c\tau + d
            \end{array}\right) \in G(\CC)
          $$
          for some $(c,d) \in \CC^2\setminus\{0\}$, so that
          $$
t = (E_2(\tau),E_4(\tau),E_6(\tau))\cdot g = \varphi_{c,d}(\tau)\text{.}
          $$
\end{proof}

Finally, for the next theorem, we consider $E_2$, $E_4$, $E_6$ as functions of $q \in D = \{z \in \CC \mid |z|<1\}$ under the change of variables $q=e^{2\pi i \tau}$. Note that $\theta = q\frac{d}{d q}$.

\begin{theorem}
  Consider the vector field $w$ on $\AA^4_{\CC}$ given by
  $$
w = x_0 \frac{\partial}{\partial x_0} + \frac{(x_1^2-x_2)}{12}\frac{\partial}{\partial x_1} + \frac{(x_1x_2-x_3)}{3}\frac{\partial}{\partial x_2} + \frac{x_1x_3-x_2^2}{2}\frac{\partial}{\partial x_3}
$$
and consider the holomorphic curve
\begin{align*}
  \varphi: D \to \AA^4(\CC)\text{, } \qquad    q \mapsto (q,E_2(q),E_4(q),E_6(q))
\end{align*}
satisfying the differential equation
$$
\theta \varphi = w \circ \varphi\text{.}
$$
Then $\varphi$ satisfies Nesterenko's $D$-property.
\end{theorem}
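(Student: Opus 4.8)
The plan is to deduce Nesterenko's $D$-property from the sufficient condition recorded right after Lemma~\ref{lemma:closureinvariant}: it is enough to show that the image of $\varphi$ is Zariski-dense in $\AA^4_{\CC}$ and that every \emph{irreducible} $w$-invariant proper closed subvariety of $\AA^4_{\CC}$ through $\varphi(0)$ is contained in one of a fixed finite list of proper closed subvarieties. Here $\varphi(0)=(0,1,1,1)$, since $E_2$, $E_4$, $E_6$ all have constant term $1$; note that $w$ vanishes at this point (because $v$ does at $(1,1,1)$), so $\varphi$ is only a logarithmic integral curve of $w$ there, but this is harmless, the argument behind the cited criterion using nothing but Zariski-density of the image. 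I will take the list to be $\{\,V(x_0),\ V(x_2^3-x_3^2)\,\}$; granting the two points above, the $D$-property follows with $c=2$, using that $\ord_0(x_0\circ\varphi)=\ord_0(q)=1$ and $\ord_0\big((x_2^3-x_3^2)\circ\varphi\big)=\ord_0\big(1728\,\Delta(q)\big)=1$ (and that components of a $w$-invariant subvariety are again $w$-invariant). The Zariski-density of the image is immediate from Theorem~\ref{thm:mahler}: a nonzero polynomial vanishing on it would, under $q=e^{2\pi i\tau}$, produce an algebraic relation over $\CC$ among $e^{2\pi i\tau},E_2,E_4,E_6$ on $\mathbf{H}$.

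The heart of the matter is the classification of invariant subvarieties. First I would upgrade Corollary~\ref{coro:every-leaf} to the statement that $T$ has \emph{no} proper nonempty closed $v$-invariant subvariety (here $v$ is the vector field of Corollary~\ref{coro:every-leaf}). The key point is that the zero locus of $v$ in $\AA^3_{\CC}$ is the twisted cubic $\{(s,s^2,s^3):s\in\CC\}$, which lies in $V(x_2^3-x_3^2)$ and is therefore disjoint from $T$; so a proper closed irreducible $v$-invariant $Z\subsetneq T$ would have a smooth point $t_0$ at which $v$ does not vanish and is tangent to $Z$, whence the leaf $L$ of $v$ through $t_0$ lies in $Z$ along a nonempty open subset of $L$. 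A regular function on $T$ vanishing on $Z$ would then vanish on that open subset, hence on all of $L$ by the identity theorem, hence on $T$ by the Zariski-density of $L$ in $T$ (Corollary~\ref{coro:every-leaf}), forcing $Z=T$. From this it follows at once that the only closed $v$-invariant subvarieties of $\AA^3_{\CC}$ are $\AA^3_{\CC}$ itself and the closed subvarieties of $V(x_2^3-x_3^2)$: an irreducible closed $v$-invariant $Z\subseteq\AA^3_{\CC}$ not contained in $V(x_2^3-x_3^2)$ has $Z\cap T$ dense in $Z$ and $v$-invariant in $T$, so $Z\cap T=T$ and $Z=\AA^3_{\CC}$.

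Next I would lift this to $\AA^4_{\CC}$. Let $Y\subsetneq\AA^4_{\CC}$ be irreducible, closed, $w$-invariant, with $(0,1,1,1)\in Y$, and suppose $Y\not\subseteq V(x_0)$ and $Y\not\subseteq V(x_2^3-x_3^2)$. Since $w$ acts on $\CC[x_1,x_2,x_3]$ exactly as $v$ does, the Zariski closure of the image of $Y$ under the projection $p_1\colon\AA^4_{\CC}\to\AA^3_{\CC}$ forgetting $x_0$ is a closed $v$-invariant subvariety of $\AA^3_{\CC}$ through $(1,1,1)$; if it were contained in $V(x_2^3-x_3^2)$ then so would $Y$ be (that polynomial being pulled back from $\AA^3_{\CC}$), so by the previous paragraph it equals $\AA^3_{\CC}$. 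Hence $p_1|_Y$ is dominant, $\dim Y\ge3$, and properness forces $\dim Y=3$, so $Y=V(F)$ with $F$ irreducible and not a polynomial in $x_1,x_2,x_3$ alone, and $w(F)=\Lambda F$ for some polynomial $\Lambda$. Writing $F=\sum_k F_k x_0^k$ with $F_k\in\CC[x_1,x_2,x_3]$ and using $w(x_0)=x_0$ gives $w(F)=\sum_k\big(v(F_k)+kF_k\big)x_0^k$; a short argument of the same type as below shows the leading coefficient $v(F_d)+dF_d$ (with $d=\deg_{x_0}F\ge1$) is nonzero, so comparing $x_0$-degrees forces $\Lambda\in\CC[x_1,x_2,x_3]$ and then $v(F_k)=(\Lambda-k)F_k$ for every $k$. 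For each $k$ with $F_k\neq0$, either $F_k$ is a nonzero constant or $V(F_k)$ is a proper $v$-invariant hypersurface and hence, by the classification, $F_k=c_k(x_2^3-x_3^2)^{a_k}$; since $v(x_2^3-x_3^2)=x_1(x_2^3-x_3^2)$, in either case $v(F_k)=a_kx_1F_k$, so $\Lambda=a_kx_1+k$. Two different such $k$ would give $a_kx_1+k=a_{k'}x_1+k'$, forcing $k=k'$; so there is exactly one, say $k_0\ge1$, and $F=c\,x_0^{k_0}(x_2^3-x_3^2)^{a_{k_0}}$ — irreducible only if $F=cx_0$, contradicting $Y\not\subseteq V(x_0)$. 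Thus every such $Y$ lies in $V(x_0)$ or $V(x_2^3-x_3^2)$, and the $D$-property follows.

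The step I expect to be the main obstacle is exactly this transfer from the $3$-dimensional statement ``$T$ has no proper $v$-invariant subvariety'' — which is where Corollary~\ref{coro:every-leaf}, and ultimately Mahler's theorem, get used — to control of the $w$-invariant subvarieties of the ambient affine $4$-space: one has to project along $p_1$, reduce to the hypersurface case, and solve $w(F)=\Lambda F$ by a descent on the $x_0$-expansion of $F$, the classification of $v$-invariant subvarieties of $\AA^3_{\CC}$ being used both for the non-vanishing of the leading coefficient and for the shape of the $F_k$. A secondary, more delicate point is the leaf argument establishing the $3$-dimensional statement: one must choose the base point simultaneously in the smooth locus of $Z$ and away from the zeros of $v$, and it is precisely the observation that the zero locus of $v$ avoids $T$ that makes this possible. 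The remaining ingredients — the reduction to the criterion after Lemma~\ref{lemma:closureinvariant}, the order-of-vanishing computations, and the deduction of Zariski-density from Mahler's theorem — are routine.
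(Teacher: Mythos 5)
Your proposal is correct and takes essentially the same route as the paper: Zariski-density of the image via Mahler's theorem (Theorem \ref{thm:mahler}), then classification of the $w$-invariant subvarieties through $(0,1,1,1)$ by projecting to $\AA^3_{\CC}$, invoking Corollary \ref{coro:every-leaf} for the dichotomy, and solving $w(F)=\Lambda F$ degree-by-degree in $x_0$ in the remaining hypersurface case. The only difference is that you fill in the steps the paper leaves implicit or as exercises (the leaf argument upgrading Corollary \ref{coro:every-leaf} and the final determination $F=c\,x_0$), which is consistent with the intended argument.
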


\begin{proof}
  Since we already know from Mahler's theorem that the image of $\varphi$ is Zariski-dense in $\AA^4_{\CC}$, it is sufficient to prove that there's only a finite number of maximal $w$-invariant subvarieties containing $\varphi(0)=(0,1,1,1)$. Actually, we shall prove that every $w$-invariant subvariety containing $p$ is either $V(x_0)$ or it is contained in $V(x_2^3-x_3^2)$.

  Consider the projection
  $$
  \pi: \AA^4_{\CC} \to \AA^3_{\CC}\text{, }\qquad (x_0,x_1,x_2,x_3)\mapsto (x_1,x_2,x_3)\text{.}
  $$
  Let $Y\subset \AA^4_{\CC}$ be a $w$-invariant subvariety containing $p$. Then $\pi(Y)\neq \emptyset$ is $v$-invariant, where $v$ is the `Ramanujan vector field' of Corollary \ref{coro:every-leaf}. It follows from this corollary that either $\pi(Y) \subset V(x_2^3-x_3^2)$ in $\AA^3_{\CC}$, or $\pi(Y)=\AA^3_{\CC}$. In the first case, we have that $Y\subset V(x_2^3-x_3^2)$ in $\AA^4_{\CC}$. To conclude, we only need to prove that $\pi(Y) = \AA^3_{\CC}$ implies that $Y=V(x_0)$.

  Now, if $\pi(Y)= \AA^3_{\CC}$, then $Y$ has dimension at least 3, so that $Y=V(P)$ for a unique irreducible polynomial
  $$
P = f_nx_0^n + \cdots + f_1x_0 + f_0
$$
with $f_i \in \CC[x_1,x_2,x_3]$, and $f_n$ monic. Since $Y$ is $w$-invariant, there exists $Q \in \CC[x_0,x_1,x_2,x_3]$ such that $w(P)=QP$. By considering the degree in $x_0$, we see that $Q=g \in \CC[x_0,x_1,x_3]$, and we get the equations
$$
v(f_j) = (g-j)f_j
$$
for every $j=0,\ldots,n$. Again using Corollary \ref{coro:every-leaf}, we conclude (exercise) that $f_1=1$ and $f_j=0$ for every $j\neq 1$, i.e., $P=x_0$.
\end{proof}

By combining the above theorem with the Zero Lemma (Theorem \ref{thm:zerolemma}), we obtain the following corollary.

\begin{coro}\label{coro:zlramanujan}
  There exists a constant $C>0$ such that, for every polynomial $P \in \CC[x_0,x_1,x_2,x_3]\setminus\{0\}$, we have
  $$
\ord_{q=0}P(q,E_2(q),E_4(q),E_6(q))\le C (\deg P)^4\text{.}
  $$
\end{coro}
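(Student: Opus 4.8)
The plan is to obtain Corollary~\ref{coro:zlramanujan} as a direct application of the Zero Lemma (Theorem~\ref{thm:zerolemma}), taking $n = 4$, ambient variety $X = \AA^4_{\CC}$, vector field $v = w$, neighbourhood $U = D$ with coordinate $z = q$, and curve $\varphi\colon q \mapsto (q, E_2(q), E_4(q), E_6(q))$. So the task reduces to checking that the data at hand satisfy, on the nose, the hypotheses of that theorem, and then reading off its conclusion.

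First I would record that $\varphi$ is a genuine holomorphic map $D \to \AA^4(\CC)$ with $\varphi(0) = (0,1,1,1)$: the normalised Eisenstein series $E_2, E_4, E_6$ have $q$-expansions convergent on $|q|<1$ and constant term $1$, so they extend holomorphically across $q = 0$. Next I would check that the differential equation demanded by Theorem~\ref{thm:zerolemma}, namely $z \frac{d\varphi}{dz} = v \circ \varphi$, is exactly the equation $\theta\varphi = w \circ \varphi$ established in the preceding theorem: since $\theta = \frac{1}{2\pi i}\frac{d}{d\tau} = q\frac{d}{dq}$, one has $\theta\varphi = q\frac{d\varphi}{dq}$ literally. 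This is the step where the change of variables $q = e^{2\pi i \tau}$ is essential, as it places the singularity of the foliation at the origin and recasts the Ramanujan system in the Euler (logarithmic) normalisation that the Zero Lemma requires.

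The only genuine input is that $\varphi$ satisfies Nesterenko's $D$-property, and this is precisely the content of the theorem stated immediately before the corollary. With every hypothesis of Theorem~\ref{thm:zerolemma} now verified, it supplies a constant $C > 0$ with $\ord_0(P \circ \varphi) \le C (\deg P)^4$ for all nonzero $P \in \CC[x_0,x_1,x_2,x_3]$, which is exactly the assertion since $(P \circ \varphi)(q) = P(q, E_2(q), E_4(q), E_6(q))$. I do not expect any real obstacle: the corollary is a formal consequence, and the only thing deserving attention is the bookkeeping of the change of variables (confirming $\theta = q\,d/dq$ and the holomorphic extension of $E_2, E_4, E_6$ to $q = 0$) so that the Zero Lemma applies verbatim.
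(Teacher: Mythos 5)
Your proposal is correct and follows exactly the paper's route: the corollary is stated there as an immediate combination of the preceding $D$-property theorem with the Zero Lemma (Theorem \ref{thm:zerolemma}), applied with $X=\AA^4_{\CC}$, $v=w$, and $\varphi(q)=(q,E_2(q),E_4(q),E_6(q))$. Your verification that $\theta=q\frac{d}{dq}$ puts the Ramanujan system in the required form $z\frac{d\varphi}{dz}=v\circ\varphi$ is precisely the bookkeeping the paper leaves implicit.
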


\subsection{Sketch of Nesterenko's proof}\label{subsec:sketch}

Nesterenko's proof is rather long and intricate, but its general structure is not difficult to understand. In what follows we outline the main steps of Nesterenko's approach.

We see $E_2$, $E_4$, $E_6$ as holomorphic functions on the unit disk through the change of variables $q = e^{2\pi i \tau}$. To avoid any confusion, we adopt the following convention: $q$ will denote a generic variable in $D$, and $z \in D$ a point, so that $E_{2k}(q)$ is a function and $E_{2k}(z)$ is a complex number.

Let $z \in D\setminus\{0\}$. We want to prove that
\begin{align*}
\trdeg_{\QQ} \QQ(z,E_2(z),E_4(z),E_6(z)) \ge 3
\end{align*}
and the main idea is to apply Philippon's algebraic independence criterium as stated in Theorem \ref{thm:philippon} above. This means that we have to construct a sequence of polynomials $Q_d \in \ZZ[x_0,\ldots,x_3]$, $d\gg 0$, satisfying
\begin{enumerate}
\item $\deg Q_d =O( d \log d)$,
\item $\log \|Q_d\|_{\infty} =O( d (\log d)^2)$, and 
  \item $-ad^4 \le \log |Q_d(z,E_2(z),E_4(z),E_6(z)| \le -bd^4$
  \end{enumerate}
for some constants $a,b>0$ (not depending on $d$). The first step in obtaining the sequence $(Q_d)_{d\gg 0}$ is the construction of the so-called \emph{auxiliary polynomials}.

\begin{lemma}\label{lemma:lemma1}
  There is a constant $c >0$ such that, for every integer $d\gg 0$ there exists a polynomial $P_d \in \ZZ[x_0,\ldots,x_3]$ satisfying
  \begin{enumerate}
  \item $\deg P_d = d$,
  \item $\log \|P_d\|_{\infty} = O(d\log d)$, and
    \item $\ord_{q=0}P_d(q,E_2(q),E_4(q),E_6(q))\ge  c d^4$. 
  \end{enumerate}
\end{lemma}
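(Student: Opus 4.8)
The plan is to construct $P_d$ by the classical auxiliary-polynomial technique: a linear-algebra (Siegel's lemma) argument that exploits the fact that the Fourier coefficients of $E_2$, $E_4$, $E_6$ are \emph{integers} with only \emph{polynomial} growth. Recall that $E_2 = 1 - 24\sum_{n\ge 1}\sigma_1(n)q^n$, $E_4 = 1 + 240\sum_{n\ge 1}\sigma_3(n)q^n$ and $E_6 = 1 - 504\sum_{n\ge 1}\sigma_5(n)q^n$, where $\sigma_k(n) = \sum_{e\mid n}e^k$; since $0\le\sigma_k(n)\le n^{k+1}$, each of $E_2,E_4,E_6$ lies in $\ZZ[[q]]$ and has $n$-th coefficient bounded in absolute value by $504(n+1)^6$. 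Consider an unknown polynomial $P_d = \sum_{\mathbf a}u_{\mathbf a}\,x_0^{a_0}x_1^{a_1}x_2^{a_2}x_3^{a_3}$, with $\mathbf a = (a_0,a_1,a_2,a_3)$ ranging over the tuples in $\NN^4$ satisfying $a_0+a_1+a_2+a_3\le d$, of which there are $L = \binom{d+4}{4}\ge d^4/24$, and with unknown coefficients $u_{\mathbf a}\in\ZZ$. For each $N\ge 0$, requiring that the coefficient of $q^N$ in $P_d(q,E_2(q),E_4(q),E_6(q))$ vanish is a single \emph{linear} equation over $\ZZ$ in the $u_{\mathbf a}$: the coefficient of $u_{\mathbf a}$ in it is the coefficient of $q^N$ in the power series $q^{a_0}E_2^{a_1}E_4^{a_2}E_6^{a_3}$, an integer.

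First I would bound these integer coefficients. Fix $\mathbf a$ with $a_0+a_1+a_2+a_3\le d$ and put $k = a_1+a_2+a_3\le d$. The coefficient of $q^N$ in $q^{a_0}E_2^{a_1}E_4^{a_2}E_6^{a_3}$ is a sum of at most $\binom{N+k}{k}\le(N+d)^d$ products of $k$ Fourier coefficients, each of absolute value $\le 504(N+1)^6$ since every index occurring is $\le N$; hence it has absolute value $\le 504^d(N+d)^{7d}$. If one only imposes vanishing of the coefficients of $q^0,q^1,\dots,q^{M-1}$, where $M = \lceil c\,d^4\rceil$ for a small constant $c>0$ to be fixed, then $N\le M\le d^4$ for $d\gg 0$, so the resulting homogeneous system has $M$ equations, $L$ integer unknowns, and integer coefficients of absolute value $H$ with $\log H = O(d\log d)$.

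Now I would apply Siegel's lemma (the box principle): choosing $c<1/48$ guarantees $M\le L/2$ for $d\gg 0$, so the system has a nonzero solution $(u_{\mathbf a})\in\ZZ^L$ with $\max_{\mathbf a}|u_{\mathbf a}|\le 1+(LH)^{M/(L-M)}\le 1+LH$. The corresponding nonzero $P_d$ then has $\deg P_d\le d$ and $\log\|P_d\|_\infty\le\log(1+LH) = O(d\log d)$, which is (2), while by construction $\ord_{q=0}P_d(q,E_2(q),E_4(q),E_6(q))\ge M\ge c\,d^4$, which is (3). To upgrade $\deg P_d\le d$ to the equality asked in (1), replace $P_d$ by $x_0^{\,d-\deg P_d}P_d$: since this corresponds to multiplying $P_d(q,E_2,E_4,E_6)$ by $q^{\,d-\deg P_d}$, it leaves $\|P_d\|_\infty$ unchanged, can only increase the order of vanishing, and makes the degree exactly $d$.

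The routine but slightly delicate point --- and essentially the only content of the argument --- is the coefficient estimate of the second paragraph: although one imposes $\sim d^4$ conditions and hence must control Taylor coefficients of order up to $\sim d^4$, the building blocks $E_2,E_4,E_6$ have Fourier coefficients growing only polynomially in $n$, so the entries of the linear system are of size $e^{O(d\log d)}$ rather than, say, $e^{O(d^4)}$; this is exactly what lets Siegel's lemma deliver the bound $\log\|P_d\|_\infty = O(d\log d)$ required in (2) instead of a useless one. Let me also record for later use that, by Theorem \ref{thm:mahler}, the series $P_d(q,E_2,E_4,E_6)$ is nonzero, whereas Corollary \ref{coro:zlramanujan} bounds its order of vanishing from above by $C\,d^4$; the interplay between this upper bound and the lower bound $c\,d^4$ obtained here is the mechanism driving the remainder of Nesterenko's proof.
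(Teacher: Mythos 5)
Your proposal is correct and follows essentially the same route as the paper: an auxiliary polynomial obtained from Siegel's lemma applied to the linear system expressing vanishing of the first $\sim d^4$ Taylor coefficients, with the entries controlled by the integrality and polynomial growth of the Fourier coefficients of $E_2,E_4,E_6$. In fact you are slightly more careful than the paper's sketch: by imposing only $M\le L/2$ conditions (i.e. $c<1/48$) you keep the Siegel exponent $M/(L-M)$ bounded, which is what actually yields $\log\|P_d\|_\infty=O(d\log d)$, whereas the sketch's choice $r=\lfloor d^4/4!\rfloor$ is too close to $s=\binom{d+4}{4}$ to give that bound directly; your final adjustment by a power of $x_0$ to get $\deg P_d=d$ exactly is also a point the sketch leaves implicit.
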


\begin{proof}[Sketch of the proof]
  For every multi-index $J=(j_0,\ldots,j_3)$ with $|J| = j_0+\cdots + j_3\le d$, write
  $$
q^{j_0}E_2(q)^{j_1}E_4(q)^{j_2}E_6(q)^{j_3} =  \sum_{i\ge 0}t_{i,J}q^i \in \ZZ[\![q]\!]\text{.}
$$
Note that $t_{i,J}$ are rational integers because the Taylor coefficients of $E_{2k}$ are. Let
  $$
  P = \sum_{|J|\le d} v_J x^J  \in \ZZ[x_0,\ldots,x_3]
  $$
  be a polynomial with `unknown coefficients' $v_J$, so that
  $$
P(q,E_2(q),E_4(q),E_6(q)) = \sum_{i\ge 0}\left(\sum_{|J|\le d}t_{i,J}v_J \right)q^i \in \ZZ[\![q]\!]\text{.}
$$
Let $r= \lfloor\frac{1}{4!}d^4\rfloor$. To find $P$ such that $\ord_{q=0}P(q,E_2(q),E_4(q),E_6(q))\ge r$ is equivalent to solving the following $r$ linear equations over $\ZZ$ with $s = \binom{d+4}{4}$ variables:
$$
\sum_{|J|\le d}t_{i,J}v_J = 0 \text{, } \ \ \ \ i=0,\ldots,r-1\text{.}
$$
To get a bound on the size of a solution $(v_J)$, we apply the ubiquitous Siegel's lemma.

\begin{lemma}[Siegel; cf. \cite{waldschmidt74} Lemme 1.3.1 or \cite{MR14} Lemma 6.1]
Let $s>r>0$ be integers and $T \in M_{r\times s}(\ZZ)$ be such that $\|T\|_{\infty}\le b$. Then, there exists $v \in \ZZ^s\setminus\{0\}$ with $\|v\| \le 2(2sb)^{r/(s-r)}$ such that $Tv=0$.
\end{lemma}

To finish, we just need to prove that $\max_{i\le r-1,|J|\le d}|a_{i,J}| = O(d^d)$. This follows from the fact that the Taylor coefficients of $E_{2k}$, which are given up to a constant by the arithmetical function $\sigma_{2k-1}(m) = \sum_{d \mid m}m^{2k -1}$ have polynomial growth; namely, we have the trivial bound $\sigma_{2k-1}(m) \le m^{2k}$.
\end{proof}

\begin{exo}
Complete the above proof.
\end{exo}

For the next step, let us denote $f_d(q) = P_d(q,E_2(q),E_4(q),E_6(q))$ and $m_d = \ord_{q=0}f_d$.

\begin{lemma}\label{lemma:lemma2}
  There exist $\alpha>\beta>0$ and, for $d\gg 0$, a sequence $k_d =O(d \log m_d)$ satisfying
  $$
-\alpha m_d \le \log \left|f_d^{(k_d)}(z) \right| \le - \beta m_d\text{.}
  $$
\end{lemma}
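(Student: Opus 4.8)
The plan is to prove the two inequalities by quite different means. The \emph{upper} bound $\log|f_d^{(k_d)}(z)|\le -\beta m_d$ is soft and in fact holds for \emph{every} $k=O(d\log m_d)$: since $f_d=P_d(q,E_2,E_4,E_6)$ vanishes to order $m_d$ at $q=0$, Schwarz's lemma on a fixed disk $|q|\le R$ with $|z|<R<1$ gives $|f_d(q)|\le\|f_d\|_R\,(|q|/R)^{m_d}$ there, where $\log\|f_d\|_R=O(d\log d)$ by Lemma~\ref{lemma:lemma1} and the boundedness of $E_2,E_4,E_6$ on $|q|\le R$. Cauchy's inequalities on a circle $|q-z|=\rho$ with $|z|+\rho<R$ then give $\log|f_d^{(k)}(z)|\le\log k!+k\log\tfrac1\rho+\log\|f_d\|_R-m_d\log\tfrac{R}{|z|+\rho}$, which is $-\Theta(m_d)+O(d(\log d)^2)$ for $k=O(d\log m_d)$, hence $\le -\beta m_d$ for $d\gg0$ because $m_d\asymp d^4$ (the lower bound $m_d\ge cd^4$ is Lemma~\ref{lemma:lemma1}, the upper bound $m_d\le Cd^4$ is Corollary~\ref{coro:zlramanujan}). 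So the whole point is to produce $k_d=O(d\log m_d)$ for which $|f_d^{(k_d)}(z)|$ is \emph{not too small}.

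The arithmetic input enters here. Set $g_d(q)\defeq f_d(q)/q^{m_d}$; since the $q$-expansions of $E_2,E_4,E_6$ and the coefficients of $P_d$ are integers, $g_d(0)\in\ZZ\setminus\{0\}$, so $|g_d(0)|\ge1$. I would first bound $\mu_d\defeq\ord_{q=z}f_d=\ord_{q=z}g_d$ by applying Jensen's formula to $g_d$ on the disk $|q|\le r_d$ with $r_d=1-d^{-3}$: the zero at $z$ contributes $\mu_d\log(r_d/|z|)$ to the left-hand side, and the right-hand side is at most $\log\|g_d\|_{r_d}=\log\|f_d\|_{r_d}+m_d\log\tfrac1{r_d}$. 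The key observation is that the $E_{2k}$ only grow polynomially in $d$ on $|q|=1-d^{-3}$ (from $\sigma_{2k-1}(m)\le m^{2k}$), so $\log\|f_d\|_{r_d}=O(d\log d)$ still, while $m_d\log\tfrac1{r_d}=O(m_dd^{-3})=O(d)$; since $\log(r_d/|z|)\to\log(1/|z|)>0$, this yields $\mu_d=O(d\log d)=O(d\log m_d)$. The same estimate bounds the total multiplicity of the zeros of $f_d$ in a fixed small disk $|q-z|\le\delta_0$ by $O(d\log m_d)$.

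For the magnitude, I would take $k_d$ to be the least integer $\ge\mu_d$ with $f_d^{(k_d)}(z)\neq0$ and $|f_d^{(k_d)}(z)|\ge e^{-\alpha m_d}$, and argue that it satisfies $k_d\le\mu_d+O(d\log m_d)$. With $h_d\defeq f_d/(q-z)^{\mu_d}=q^{m_d}G_d$, one has $|G_d(0)|=|g_d(0)|/|z|^{\mu_d}\ge1$, and the two-constants theorem applied to $G_d$ on $\{|q|<r_d\}\setminus\overline{\{|q-z|\le\rho'\}}$, fed with $|G_d(0)|\ge1$, forces $\sup_{|q-z|=\rho'}|f_d|\ge e^{-O(m_d)}$. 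On the other hand, if $|f_d^{(\mu_d+j)}(z)|<e^{-\alpha m_d}$ for all $0\le j\le L$, then estimating the Taylor series of $h_d$ at $z$ (head small by hypothesis, tail geometric and controlled by $\|h_d\|_{z,\rho}\le\|f_d\|_{z,\rho}/\rho^{\mu_d}$ together with Schwarz) makes $\sup_{|q-z|\le\rho'}|f_d|$ strictly smaller than that lower bound once $\alpha$ and $L$ are large enough — a contradiction. The delicate point, and the technical heart of the lemma, is to carry this out with $L=O(d\log m_d)$, so that $k_d=\mu_d+L=O(d\log m_d)$: one must choose $r_d,\rho,\rho'$ jointly with some care, balancing the geometric decay rate of the Taylor tail against the loss in the two-constants estimate coming from the harmonic measure of a small circle about $z$, and handle the case where $f_d$ has a tight cluster of zeros near $z$ (which is exactly why $k_d$ may have to exceed $\mu_d$).
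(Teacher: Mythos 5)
Your skeleton is the right one and is essentially the argument the paper only sketches and delegates to \cite{NP01}, Ch.~3, Lemma~3.3: the arithmetic anchor is the integrality $|g_d(0)|\ge 1$ of the first nonzero coefficient at $q=0$, the upper bound follows from the order-$m_d$ vanishing at $q=0$ via Schwarz plus Cauchy, and a Jensen estimate on a circle of radius $1-d^{-3}$ does give $\mu_d=\ord_{q=z}f_d=O(d\log d)=O(d\log m_d)$ (using $m_d=O(d^4)$ from Corollary~\ref{coro:zlramanujan} and $m_d\ge cd^4$ from Lemma~\ref{lemma:lemma1}, which is legitimate). Those parts are fine.

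The gap is exactly the point you yourself defer as ``delicate'': the claim $k_d\le\mu_d+O(d\log m_d)$ is not established, and with the radii read as fixed (as your text suggests) the comparison cannot close. Your two-constants step bounds $\sup_{|q-z|=\rho'}|G_d|$ from below by $e^{-O(d\log d)}$, but to return to $f_d$ you must multiply by $|q|^{m_d}\ge(|z|-\rho')^{m_d}$, so the usable lower bound is only $\sup_{|q-z|=\rho'}|f_d|\ge e^{-c_1m_d-O(d\log d)}$ with $c_1\approx\log\frac{1}{|z|-\rho'}$; meanwhile the Taylor tail, even with the Schwarz factor, is of size $e^{O(d\log d)}\left(\frac{|z|+\rho}{R}\right)^{m_d}(\rho'/\rho)^{L+1}$. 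Beating the lower bound therefore requires $(L+1)\log(\rho/\rho')\gtrsim m_d\log\frac{|z|+\rho}{R(|z|-\rho')}$, and since $\frac{|z|+\rho}{R(|z|-\rho')}>1$ for any fixed $\rho,\rho'>0$ and $R<1$, this forces $L\asymp m_d\asymp d^4$, not $O(d\log m_d)$. The scheme can be rescued only by letting the radii shrink with $d$ (e.g.\ $1-R\asymp1-r_d\asymp d^{-3}$, $\rho\asymp d^{-3}\log d$, $\rho/\rho'$ a fixed power of $d$), so that all $m_d$-proportional losses collapse to $O(d(\log d)^2)$; but then the harmonic measure of the inner circle is no longer bounded below, $\omega\asymp1/\log(1/\rho')\asymp1/\log d$, and the two-constants output degrades to $e^{-O(d(\log d)^2)}$, which must be re-absorbed before one can conclude with $L\asymp d\log d$. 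Carrying out this balancing (and checking at the end that the resulting $\alpha$ exceeds the $\beta$ from your upper bound) is the actual content of the lemma, and it is missing from your write-up.
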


This is the most technical, and most analytical, part of the proof. The main point in obtaining the bound $-\alpha m_d \le \log \left|f_d^{(k_d)}(z) \right|$ can explained through the following intuitive argument. If all the Taylor coefficients of $f_d$ at $q=z$ up to a sufficiently large order are too small, then its first non-zero Taylor coefficient at $q=0$ will have absolute value $<1$, thereby \emph{contradicting its integrality} (cf. Remark \ref{rmk:ftt}). Of course, the difficulty here consists in precisely quantifying `too small' and `sufficiently large order'. Once this is established, the other bound $\log \left|f_d^{(k_d)}(z) \right| \le - \beta m_d$ is a mere consequence of the Cauchy inequalities. We refer to \cite{NP01} Chapter 3, Lemma 3.3 for details.

Finally, we use the existence of the Ramanujan equations. Consider the derivation
$$
w=x_0 \frac{\partial}{\partial x_0} + \frac{(x_1^2-x_2)}{12}\frac{\partial}{\partial x_1} + \frac{(x_1x_2-x_3)}{3}\frac{\partial}{\partial x_2} + \frac{x_1x_3-x_2^2}{2}\frac{\partial}{\partial x_3}
$$
and define, for every $k\ge 1$,
$$
w^{[k]} =12^{k} w \circ(v-1) \circ \cdots \circ (w-(k-1))\text{.} 
$$

We set
$$
Q_d = w^{[k_d]}P_d \in \ZZ[x_0,\ldots,x_3]\text{.}
$$
Since $k_d =O( d \log m_d)$, it is clear that $\deg Q_d = O(d\log m_d)$, and that $\log \|Q_d\|_{\infty} = O(d(\log d)(\log m_d))$. Moreover, the identity of derivations
$$
q^k\frac{d^k}{dq^k} = \theta \circ (\theta -1 ) \circ \cdots \circ (\theta - (k-1))
$$
where $\theta = q\frac{d}{dq}$, implies that
$$
(12q)^{k_d}f_d^{(k_d)}(q) = Q_d(q,E_2(q),E_4(q),E_6(q))\text{.}
$$
Using Lemma \ref{lemma:lemma2}, we immediately deduce the bound
$$
-\alpha m_d - \gamma k_d \le \log |Q_d(z,E_2(z),E_4(z),E_6(z))|\le -\beta m_d - \gamma k_d
$$
where $\gamma = -\log |12z| >0$.

To conclude, we observe that $k_d = O(d\log m_d)$, that $d^4 = O(m_d)$ by Lemma \ref{lemma:lemma1}, and that $m_d = O(d^4)$ by Corollary \ref{coro:zlramanujan}.

\begin{obs}
The use of `auxiliary polynomials' is an essential step in Nesterenko's proof. Similar ideas occur in most results in Diophantine approximation or transcendental number theory. The reader may consult \cite{masser16} for a thorough exposition of the role of auxiliary polynomials in number theory. 
\end{obs}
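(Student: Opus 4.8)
The final statement is a \emph{remark} of expository and bibliographical character rather than a mathematical proposition, so there is no theorem here to prove in the usual sense; the `proof' I would write is a short piece of commentary. The first sentence --- that the construction of auxiliary polynomials is an essential step in Nesterenko's argument --- can nonetheless be substantiated, and the plan would be to do so by re-reading the skeleton of the proof just sketched and pointing out that \emph{every} later step takes the polynomial $P_d$ of Lemma \ref{lemma:lemma1} as its input: Lemma \ref{lemma:lemma2} is a statement about the Taylor coefficients at $z$ of $f_d = P_d(q,E_2(q),E_4(q),E_6(q))$, and the family $(Q_d)$ eventually fed into Philippon's criterion (Theorem \ref{thm:philippon}) is literally $Q_d = w^{[k_d]}P_d$. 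The whole argument is organised around producing, and then successively differentiating, the polynomials $P_d$.

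To explain why such a $P_d$ cannot be dispensed with, I would argue as follows. Philippon's criterion requires a family $(Q_d)$ of polynomials of controlled degree and height whose values at $(z,E_2(z),E_4(z),E_6(z))$ are simultaneously nonzero and exponentially small in $d^4$. One cannot write such polynomials down explicitly: the only structural information available about $E_2,E_4,E_6$ is the polynomial growth of their Taylor coefficients (from $\sigma_{2k-1}(m)\le m^{2k}$) together with the Ramanujan equations (\ref{rameq}). Siegel's lemma --- a pigeonhole statement --- is precisely the device that turns `many integer unknowns, comparatively few linear constraints' into a small integer solution, and it is this mechanism that produces $P_d$ with $\ord_{q=0}f_d \ge cd^4$; the matching upper bound $\ord_{q=0}f_d = O(d^4)$ then comes from the Zero Lemma (Corollary \ref{coro:zlramanujan}), itself resting on the $D$-property. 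Remove the existence statement for $P_d$ and the argument has no starting point. For the second sentence of the remark one would simply survey the parallel constructions elsewhere in the paper and in the literature --- the estimate at the heart of Liouville's theorem, the auxiliary functions underlying the Schneider-Lang and Siegel-Shidlovsky theorems (hence their corollaries, such as Hermite-Lindemann and Schneider's theorem), the Gelfond-Schneider and Baker methods --- in each of which a function is built by a counting argument and then shown, via analytic estimates and the `fundamental theorem of transcendence' (Remark \ref{rmk:ftt}), to yield a contradiction. For the third sentence there is nothing to do beyond citing \cite{masser16}.

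The main obstacle is therefore not mathematical but one of scope and honesty: any `proof' here is exposition, and the temptation to over-formalise a deliberately informal remark should be resisted. I would keep it brief, avoid stating and proving a spurious `essentiality' theorem --- which would require a precise and somewhat artificial model of what counts as `a proof of Nesterenko's theorem' --- and simply point the reader to \cite{masser16} and to the references already collected in the earlier sections.
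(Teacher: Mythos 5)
Your reading is correct: the statement is an expository remark with no proof in the paper, and your substantiation of its first sentence simply retraces the surrounding sketch (the auxiliary polynomial $P_d$ of Lemma \ref{lemma:lemma1} built via Siegel's lemma, the analytic estimates of Lemma \ref{lemma:lemma2}, and $Q_d = w^{[k_d]}P_d$ fed into Philippon's criterion), which is exactly the context the paper itself relies on. Nothing further is needed.
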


\section{Periods}

Roughly speaking, a period (or an arithmetic period) is an integral that appears in algebraic geometry over $\QQ$.

The set of all periods forms a countable subset of $\CC$ -- actually, a subring -- containing all the algebraic numbers, but `most' of the periods are transcendental numbers. It is the subtle connection between these numbers and algebraic geometry that allows us to make sense of their structure and their symmetries.

Most of the general theory is still largely conjectural, but the omnipresence of periods in number theory (regulators, special values of $L$-functions, etc.) and high energy physics (Feynman amplitudes) makes the study of periods one of the most attractive subjects in current mathematics. 

\subsection{Definition} 

There are several equivalent definitions; we start with the more elementary ones given in \cite{KZ01}.

\begin{defi}
 A real number $\varpi$ is a \emph{period} if it can be written as an absolutely convergent integral
 $$
 \varpi = \int_{\sigma} \frac{P(t_1,\ldots,t_n)}{Q(t_1,\ldots,t_n)}dt_1\cdots dt_n
 $$
 where $P,Q \in \QQ[t_1,\ldots,t_n]$, $Q\neq 0$, and $\sigma \subset \RR^n$ is a domain given by polynomial inequalities with rational coefficients. A complex number is a period if its real and imaginary parts are periods.
\end{defi}

For instance,
$$
\pi = \int_{t_1^2+t_2^2 \le 1}dt_1 dt_2\text{ and } \log 2 = \int_{1\le t \le 2} \frac{dt}{t}
$$
are periods. A less trivial example is
$$
\zeta(3) = \sum_{n=1}^{\infty}\frac{1}{n^3} = \int_{0<t_1<t_2<t_3<1}\frac{dt_1dt_2dt_3}{(1-t_1)t_2t_3}\text{.}
$$

\begin{exo}
 The algebraic number $\sqrt{2}$ is a period, since it can be written as
 $$
 \sqrt{2} = \int_{t^2\le 2}\frac{dt}{2}
 $$
 Similarly, show that \emph{any algebraic number is a period}.
\end{exo}

One can show that we can replace rational numbers by algebraic numbers, and rational functions by algebraic functions (with algebraic coefficients) in the definition of periods.

More generally, let $\overline{\QQ}\subset \CC$ be the algebraic closure of $\QQ$ in $\CC$, and consider a tuple $(X,D,\omega,\sigma)$, where $X$ is a quasi-projective variety over $\overline{\QQ}$, $Y$ is a closed subvariety of $X$, $\omega \in \Gamma(X,\Omega^n_{X/\overline{\QQ}})$ is a closed algebraic $n$-form on $X$ vanishing on $Y$, and $\sigma \subset X(\CC)$ is a singular (topological) $n$-chain with boundary $\partial \sigma \subset Y(\CC)$.

\begin{prop}
 For every tuple $(X,Y,\omega,\sigma)$ as above, the number
$$
\int_{\sigma}\omega
$$
is a period. Conversely, every period is of this form.
\end{prop}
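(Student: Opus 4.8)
The plan is to prove the two implications separately. The substantive one is that every number $\int_{\sigma}\omega$ coming from a tuple $(X,Y,\omega,\sigma)$ is a Kontsevich--Zagier period; the converse amounts to repackaging an elementary period as such a tuple.

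\emph{From tuples to periods.} Since $d\omega=0$, since $\omega$ vanishes on $Y$, and since $\partial\sigma\subset Y(\CC)$, Stokes' theorem shows that $\int_{\sigma}\omega$ depends only on the class $[\sigma]\in H_n(X(\CC),Y(\CC);\QQ)$ (and on the relative de Rham class of $\omega$), so we may replace $\sigma$ by any homologous relative cycle. I would then fix a finite affine open cover $X=\bigcup_i U_i$ defined over $\overline{\QQ}$ and, using the triangulability of complex algebraic sets compatibly with closed subsets (\L{}ojasiewicz, Hironaka, Hardt), choose a \emph{semialgebraic} triangulation of the pair $(X(\CC),Y(\CC))$, realised inside a real-coordinate space over $\overline{\QQ}\cap\RR$, which is subordinate to the cover: every closed simplex lies in some $U_i(\CC)$, and simplices meeting $Y$ lie in $Y(\CC)$. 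This exhibits $[\sigma]=\sum_k a_k[\tau_k]$ with $a_k\in\QQ$ and each $\tau_k\colon\Delta^n\to U_{i(k)}(\CC)$ a semialgebraic simplex defined over $\overline{\QQ}$, whence $\int_{\sigma}\omega=\sum_k a_k\int_{\tau_k}\omega$; as periods form a ring containing $\overline{\QQ}$, it suffices to treat one $\int_{\tau}\omega$. Fixing such a $\tau$ with image in $U=\Spec A$, $A=\overline{\QQ}[x_1,\dots,x_N]/I$, and writing $x_j=u_j+\sqrt{-1}\,v_j$: on the affine $U$ the form $\omega$ is a finite $A$-combination of wedges $dx_{s_1}\wedge\cdots\wedge dx_{s_n}$, so expanding everything in the real coordinates $y=(u_1,\dots,u_N,v_1,\dots,v_N)$ and using that real and imaginary parts of elements of $A$ are real-algebraic functions with $\overline{\QQ}$-coefficients, $\int_{\tau}\omega$ becomes a finite $\overline{\QQ}$-combination of integrals $\int_{\tau}h\,dy_{j_1}\wedge\cdots\wedge dy_{j_n}$ with $h$ algebraic. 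By the cell-decomposition (rectilinearization) of semialgebraic sets I would subdivide $\Delta^n$ into finitely many pieces on each of which $\tau$ is a semialgebraic homeomorphism onto the graph of a piecewise-algebraic map over an open semialgebraic $D\subset\RR^n$ in the coordinates $(y_{j_1},\dots,y_{j_n})$; discarding the lower-dimensional overlaps (irrelevant for the integral) and applying the change-of-variables formula turns each summand into $\pm\int_{D}\widetilde h\,dy_{j_1}\cdots dy_{j_n}$ with $\widetilde h$ algebraic and $D$ semialgebraic, both over $\overline{\QQ}$. Such integrals are periods by the extension of the Kontsevich--Zagier definition to algebraic functions and algebraic coefficients noted just before the proposition, which finishes this direction.

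\emph{From periods to tuples.} Given $\varpi=\int_{\sigma}\frac{P}{Q}\,dt_1\cdots dt_n$ with $P,Q\in\QQ[t_1,\dots,t_n]$ and $\sigma$ cut out over $\QQ$ by polynomial inequalities, I would first apply the standard manipulations of the formalism — fixing the sign of $P/Q$ on semialgebraic pieces of $\sigma$, the ``region below the graph'' construction to clear the denominator, and a semialgebraic compactification of the domain — to write $\varpi=\sum_i a_i\int_{\overline{D_i}}\omega_i$ with $a_i\in\QQ$, each $D_i\subset\RR^m$ a bounded semialgebraic set over $\QQ$, and $\omega_i$ an algebraic $m$-form over $\QQ$ regular in a neighbourhood of $\overline{D_i}$ (in the cleanest reduction one even has $\omega_i=dt_1\wedge\cdots\wedge dt_m$). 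Let $X=\bigsqcup_i X_i$, where $X_i=\AA^m_{\overline{\QQ}}$ minus the pole divisor of $\omega_i$, which is quasi-projective; let $\omega$ restrict to $\omega_i$ on $X_i$ (an algebraic $m$-form, automatically closed as $m=\dim X$); let $Y=\bigsqcup_i Y_i$ with $Y_i\subset X_i$ the Zariski closure of the topological boundary of $\overline{D_i}$, so that $\omega|_Y=0$ because $\dim Y<\dim X$. A semialgebraic triangulation of each compact set $\overline{D_i}$ yields a finite singular $m$-chain $\sigma_i$ with support $\overline{D_i}$ and with $\partial\sigma_i$ supported in $Y_i(\CC)$; then $\sigma=\sum_i a_i\sigma_i$ has boundary in $Y(\CC)$ and $\int_{\sigma}\omega=\sum_i a_i\int_{\overline{D_i}}\omega_i=\varpi$, as required.

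\emph{Main obstacle.} The crux of both directions is the passage from abstract singular chains to \emph{semialgebraic} chains adapted to an affine cover and defined over $\overline{\QQ}$: this rests on the triangulation theorems for real and complex algebraic sets and on semialgebraic cell decomposition, and it requires keeping every auxiliary field algebraic over $\QQ$. The second delicate point is the convergence bookkeeping in the converse — compactifying the domain and removing the vanishing locus of $Q$ without leaving the category of $\QQ$-semialgebraic sets — which is routine but not quite formal for improper integrals.
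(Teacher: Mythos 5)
Your proposal is correct in outline and follows essentially the same route as the paper, whose ``proof'' is simply a pointer to \cite{HM-S17}, Section 12.2: that reference argues exactly as you do, via semialgebraic triangulation of $(X(\CC),Y(\CC))$ over the real algebraic numbers and a change of variables in affine charts for one direction, and, for the converse, the reduction to bounded $\QQ$-semialgebraic domains with regular integrand followed by taking for $Y$ the Zariski closure of the boundary, on which the top-degree form vanishes for dimension reasons. The two delicate points you single out (algebraically defined triangulations, and the improper-integral/compactification bookkeeping) are precisely where the cited source supplies the remaining careful work, so nothing in your plan needs to change.
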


\begin{proof}
See \cite{HM-S17} 12.2.
\end{proof}

Yet another way of defining periods, which makes explicit their `motivic' nature, is as follows. For a pair $(X,Y)$ as above, we can consider its \emph{relative algebraic de Rham cohomology} (see \cite{HM-S17} Chapter 3)
$$
H^n_{\dR}(X,Y)
$$
which is a finite dimensional $\overline{\QQ}$-vector space, and the singular cohomology of $X(\CC)$ relative to the closed subspace $Y(\CC)\subset X(\CC)$ with $\QQ$-coefficients, also called \emph{relative Betti cohomology},
$$
H_B^n(X,Y)
$$
which is a finite dimensional $\QQ$-vector space. Then, a theorem of Grothendieck says that, after base change to $\CC$, there is a canonical comparison isomorphism between these two cohomology groups:
$$
\comp : H^n_{\dR}(X,Y)\tensor_{\overline{\QQ}}\CC \stackrel{\sim}{\to} H_B^n(X,Y)\tensor_{\QQ}\CC
$$
Now, a period is simply a number that appears as an entry of the matrix of $\comp$ with respect to some $\overline{\QQ}$-basis of $H^n_{\dR}(X,Y)$ and some $\QQ$-basis of $H_B^n(X,Y)$.

Note that $Y$ can be empty, in which case we simply denote $H^n_{\dR}(X)$ and $H^n_B(X)$. For the proof of the equivalences between all of these different definitions of periods, we refer to \cite{HM-S17} Section 12.2.

\subsection{Elliptic periods and values of quasimodular forms}\label{sec:ellper}

Let $E$ be an elliptic curve over $\overline{\QQ}$ given by a Weierstrass equation
$$
E: y^2 = 4x^3 - ux - v \qquad (u,v \in \overline{\QQ}\text{, } u^3 - 27v^2 \neq 0)
$$
and consider the following algebraic differential forms on $E$:
$$
\omega = \frac{dx}{y}\text{,} \ \ \ \eta = x\frac{dx}{y}\text{.}
$$
Note that $\omega$ is regular on $E$ (form of the first kind), while $\eta$ has a pole at infinity (of order 2) with vanishing residue (form of the second kind). We can see $(\omega,\eta)$ as a basis of the algebraic de Rham cohomology $H^1_{\dR}(E)$ under its classical identification with the space of forms of second kind modulo exact forms (see \cite{HM-S17} Chapter 14).

Let $(\gamma_1,\gamma_2)$ be any oriented $\ZZ$-basis of the first homology group $H_1(E(\CC);\ZZ)$. Here, `oriented' means that the intersection product $\gamma_1\cap \gamma_2 =1$. We can then consider the four complex numbers
$$
\omega_1 = \int_{\gamma_1}\omega \text{, }\ \ \  \omega_2 = \int_{\gamma_2}\omega\text{, }\ \ \ \eta_1 = \int_{\gamma_1}\omega\text{, }\ \ \  \eta_2 = \int_{\gamma_2}\eta\text{.}
$$
Classically, $\omega_1$ and $\omega_2$ are known as `periods' of $E$, while $\eta_1$ and $\eta_2$ are called `quasi-periods'. Under the above modern definition, they are all periods.

\begin{ex}
  Consider the elliptic curve $E: y^2 = 4x^3 - 4x$ and let $\gamma_1 \in H_1(E(\CC);\ZZ)$ be the class of the connected component of $E(\RR)$ containing the 2-torsion point $(1,0)$.
  
  \begin{center}
   \includegraphics[scale=0.4]{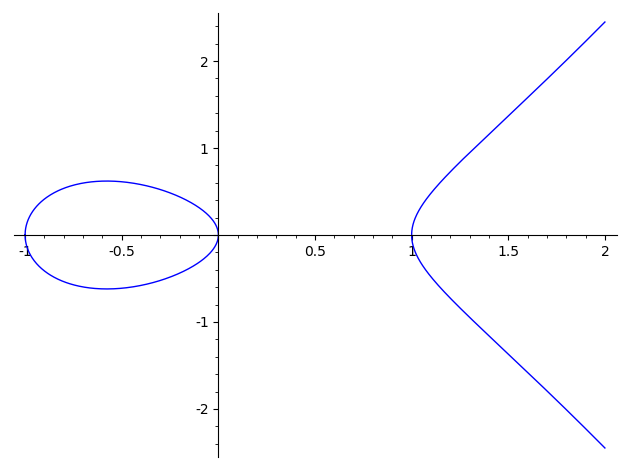}
  \end{center}
  
  Then
  $$
  \omega_1 = \int_{\gamma_1}\omega = 2\int_{1}^{\infty}\frac{dx}{\sqrt{4x^3-4x}} \stackrel{x=\frac{1}{t^2}}{=}2 \int_0^1\frac{dt}{\sqrt{1-t^4}}= \frac{\Gamma(1/4)^2}{2\sqrt{2\pi}}\text{,} 
  $$
  where to compute the last integral we used Euler's formula for the Beta function
  \begin{equation}\label{eq:betafunc}
B(a,b) \defeq \int_0^1t^{a-1}(1-t)^{b-1}dt = \frac{\Gamma(a)\Gamma(b)}{\Gamma(a+b)}\qquad (\Re(a),\Re(b)>0)\text{,}
  \end{equation}
the fact that $\Gamma(1/2) = \sqrt{\pi}$, and Euler's reflection formula:
$$
\Gamma(1/4)\Gamma(3/4) = \frac{\pi}{\sin(\pi/4)}\text{.}
$$
Let us remark $\omega_1$ computed above is a very classical object in the theory of elliptic integrals: it is half of the length of Bernoulli's lemniscate $(x^2+y^2)^2=x^2-y^2$.
\end{ex}

One of the fundamental problems in the theory of periods is to understand all the algebraic relations such numbers can satisfy. For instance, in the case of elliptic curves, we have Legendre's relation:
$$
\omega_1\eta_2-\omega_2\eta_1 = 2\pi i\text{,}
$$
which holds for every $E$. Are there any other relations? In particular, are these numbers transcendental? Are some of them algebraically independent?

\begin{conj}[Grothendieck's period conjecture for elliptic curves]
  With the above notation,
  \begin{align*}
  \trdeg_{\QQ}\QQ(\omega_1,\omega_2,\eta_1,\eta_2) =
  \begin{cases}
    2 & \text{if }E\text{ has complex multiplication}\\
    4 & \text{otherwise}
  \end{cases}
  \end{align*}
\end{conj}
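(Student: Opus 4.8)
The plan is to split into the CM and non-CM cases, since only the former is within reach of known methods; in both, the bound $\trdeg_{\QQ}\le 4$ (resp.\ $\le 2$) is elementary and the content lies in a lower bound on the transcendence degree.

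\emph{CM case.} Here a complete proof seems attainable. Assume $E$ has complex multiplication by an order in an imaginary quadratic field $K$. Then the period ratio $\tau = \omega_2/\omega_1$ is a CM point, so $\tau \in K \subset \overline{\QQ}$ and $\omega_2$ is algebraic over $\QQ(\omega_1)$. I would then invoke the classical quasi-period relation $\eta_1 \in \overline{\QQ}\,\omega_1 + \overline{\QQ}\,(\pi/\omega_1)$ valid for CM elliptic curves over $\overline{\QQ}$ (a consequence of the classical theory of complex multiplication together with Legendre's relation). Combined with Legendre's relation $\omega_1\eta_2 - \omega_2\eta_1 = 2\pi i$, which expresses $\eta_2$ through $\omega_1,\omega_2,\eta_1$ and $\pi$, this shows $\QQ(\omega_1,\omega_2,\eta_1,\eta_2)$ is algebraic over $\QQ(\omega_1,\pi)$, so $\trdeg_{\QQ}\le 2$. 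For the matching lower bound one appeals to Chudnovsky's theorem that a period $\omega_1$ of a CM elliptic curve over $\overline{\QQ}$ is algebraically independent from $\pi$ (the classical instance being the algebraic independence of $\pi$ and $\Gamma(1/4)$). Hence $\trdeg_{\QQ}\QQ(\omega_1,\omega_2,\eta_1,\eta_2) = 2$.

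\emph{Non-CM case: a dictionary with quasimodular values.} The bound $\trdeg_{\QQ}\le 4$ is automatic, so everything is in proving $\trdeg_{\QQ}\ge 4$, and the first move is to translate this into transcendence of values of $E_2,E_4,E_6$. Write $\Lambda = \ZZ\omega_1 + \ZZ\omega_2 = \omega_1(\ZZ + \ZZ\tau)$ with $\tau = \omega_2/\omega_1 \in \mathbf{H}$; note $\tau$ is not quadratic imaginary, hence transcendental by Schneider's theorem (Corollary \ref{coro:schneider}), while $j(\tau) = 1728\,u^3/(u^3 - 27v^2) \in \overline{\QQ}$. Homogeneity of $g_2,g_3$, the identities $g_2(\ZZ + \ZZ\tau) = c_2\,\pi^4 E_4(\tau)$ and $g_3(\ZZ + \ZZ\tau) = c_3\,\pi^6 E_6(\tau)$ with $c_2,c_3 \in \QQ^\times$, and the algebraicity of $u = g_2(\Lambda), v = g_3(\Lambda)$ yield
$$
E_4(\tau) = c_2^{-1}\,u\,\frac{\omega_1^4}{\pi^4}\text{,}\qquad E_6(\tau) = c_3^{-1}\,v\,\frac{\omega_1^6}{\pi^6}\text{,}
$$
and the quasi-period formula $\eta_1 = c_1\,\pi^2\omega_1^{-1}E_2(\tau)$, $c_1 \in \QQ^\times$, recovers $E_2(\tau)$ from $\omega_1,\eta_1,\pi$. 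Eliminating $\eta_2$ via Legendre, one checks that $\QQ(\omega_1,\omega_2,\eta_1,\eta_2)$ and $\QQ(\pi,\tau,E_2(\tau),E_4(\tau))$ have the same algebraic closure in $\CC$ --- in particular $E_6(\tau)$ is algebraic over the latter, which is merely a restatement of $j(\tau) \in \overline{\QQ}$. So in the non-CM case the conjecture is \emph{equivalent} to the algebraic independence over $\QQ$ of $\pi$, $\tau$, $E_2(\tau)$, $E_4(\tau)$.

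\emph{Where Nesterenko enters, and the main obstacle.} Now one feeds in Nesterenko's theorem at this $\tau$: with $q = e^{2\pi i\tau}$ it gives $\trdeg_{\QQ}\QQ(q,E_2(\tau),E_4(\tau),E_6(\tau)) \ge 3$. Under the dictionary above, $E_4(\tau)$ and $E_6(\tau)$ determine $(\omega_1/\pi)^2$ (and are algebraic over it) and $E_2(\tau)$ then determines $\eta_1/\pi$, so this unwinds to: $q$, $\omega_1/\pi$, $\eta_1/\pi$ are algebraically independent. Since $2\pi i \in \QQ(\omega_1,\omega_2,\eta_1,\eta_2)$, so are $\omega_1/\pi$ and $\eta_1/\pi$ up to a finite extension, and one concludes $\trdeg_{\QQ}\QQ(\omega_1,\omega_2,\eta_1,\eta_2) \ge 2$ unconditionally. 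The main obstacle --- and the reason the conjecture remains open --- is the jump from here to $4$: Nesterenko furnishes only three algebraically independent numbers, and the third of them is the \emph{auxiliary} quantity $q = e^{2\pi i\tau}$, not $\tau$ or $\pi$. Trading $q$ for $\tau = (2\pi i)^{-1}\log q$ is not a formal manipulation, since these encode genuinely different transcendence data (compare the failure of $z$ and $e^z$ to be algebraically independent for general transcendental $z$), and none of the available engines --- the Schneider-Lang circle, Siegel-Shidlovsky, or the $D$-property and zero-estimate machinery underlying Nesterenko's proof --- simultaneously controls $q$, $\tau$ and the quasimodular values at $\tau$. I expect a full proof would require either a substantially new ``mixed'' transcendence input of this type, or the motivic strategy behind Grothendieck's period conjecture --- deriving all relations among the periods of $E$ from its Mumford-Tate group $\GL_2$ --- which is itself wide open.
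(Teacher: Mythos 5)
The statement you were asked about is a conjecture, not a theorem of the paper: the paper offers no proof (and none exists), only the partial results of Chudnovsky and Nesterenko and the upper bound in the CM case via Exercise \ref{exo:cm}. Your treatment is accurate and matches what the paper does say: in the CM case your upper bound $\trdeg\le 2$ (CM action on $H^1_{\dR}$ plus Legendre, i.e.\ the content of Exercise \ref{exo:cm}) combined with Chudnovsky's (or Nesterenko's) lower bound gives the known equality $\trdeg=2$; your non-CM reduction to the algebraic independence of $\pi,\tau,E_2(\tau),E_4(\tau)$ is the same dictionary the paper sets up in Section \ref{sec:ellper} and is consistent with Conjecture \ref{conj:modularforms}; and your identification of the obstruction --- Nesterenko controls $q=e^{2\pi i\tau}$ rather than $\tau$ and $\pi$, yielding only the unconditional $\trdeg\ge 2$ (and $\ge 3$ for the field enlarged by $q$) --- is exactly the state of the art. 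In short: you have not proved the conjecture, nor could you, but you have correctly proved the CM case by essentially the paper's route and correctly located where the non-CM case is open.
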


Recall that $E$ has \emph{complex multiplication} if its endomorphism algebra $\End(E)$  strictly contains $\ZZ$. The geometric idea is that such extra endomorphisms of $E$ correspond to algebraic cycles (correspondences) on $E\times E$ which force algebraic relations between periods.

\begin{exo}\label{exo:cm}
  An endomorphism $\varphi:E \to E$ defined over $\overline{\QQ}$ induces an additive map
  $$
\varphi_{B,_*}: H_1(E(\CC);\ZZ) \to H_1(E(\CC);\ZZ)
$$
preserving the intersection product, and a $\overline{\QQ}$-linear map
$$
\varphi^*_{\dR} : H^1_{\dR}(E) \to H^1_{\dR}(E)
$$
preserving the subspace of differentials of first kind $H^0(E,\Omega^1_{E/\overline{\QQ}})$ and the de Rham cup product. Show that this induces an identity of the form
$$
\left(\begin{array}{cc}
        a & b \\
        c & d
\end{array}\right) \left(\begin{array}{cc}
        \omega_1 & \eta_1 \\
        \omega_2 & \eta_2
\end{array}\right) =  \left(\begin{array}{cc}
        \omega_1 & \eta_1 \\
        \omega_2 & \eta_2
\end{array}\right)\left(\begin{array}{cc}
        r & s \\
        0 & r^{-1}
\end{array}\right)
$$
with $a,b,c,d \in \ZZ$, $ad-bc=1$, and $r,s \in \overline{\QQ}$. Conclude that if $E$ has complex multiplication, then $\tau$ is quadratic imaginary, $\overline{\QQ}(\omega_1,\omega_2,\eta_1,\eta_2) = \overline{\QQ}(\omega_1,\eta_1)$, and $\overline{\QQ}(\omega_1,\eta_1)$ is algebraic over $\overline{\QQ}(2\pi i, \omega_1)$. 
\end{exo}

Loosely speaking, Grothendieck conjectured that algebraic cycles in powers of some algebraic variety $X$ are the \emph{only way} of producing algebraic relations between periods of $X$ (see \cite{andre04} 23.4.1 for a more precise statement). For elliptic curves, Schneider proved that each one of $\omega_1$, $\omega_2$, $\eta_1$ and $\eta_2$ are transcendental numbers, and Chudnovsky proved the uniform bound:
$$
\trdeg_{\QQ}\QQ(\omega_1,\omega_2,\eta_1,\eta_2)\ge 2
$$
for any elliptic curve, therefore establishing Grothendieck's period conjecture for complex multiplication elliptic curves (see \cite{waldschmidt06} and references therein). 

\begin{prop}
  With notation as above, if $\tau = \omega_2/\omega_1$, then $\Im (\tau)>0$ and
  $$
E_2(\tau) = 12 \left(\frac{\omega_1}{2\pi i}\right)\left(\frac{\eta_1}{2\pi i}\right)\text{, }E_4(\tau) = 12u \left(\frac{\omega_1}{2\pi i} \right)^{4}\text{, }E_6(\tau) = -216v\left(\frac{\omega_1}{2\pi i} \right)^6\text{.}
$$
\end{prop}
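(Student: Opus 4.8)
The plan is to relate the algebraic de Rham data $(\omega,\eta)$ on the Weierstrass curve $E: y^2 = 4x^3 - ux - v$ to the complex-analytic uniformization $E(\CC) = \CC/\Lambda$ with $\Lambda = \ZZ\omega_1 + \ZZ\omega_2$, and then to invoke the moduli-theoretic interpretation of $\varphi(\tau) = (E_2(\tau), E_4(\tau), E_6(\tau))$ recalled in Remark \ref{rmk:motivationhre}. First I would set $\tau = \omega_2/\omega_1$; the hypothesis that $(\gamma_1,\gamma_2)$ is oriented, i.e. $\gamma_1 \cap \gamma_2 = 1$, forces $\Im(\tau) > 0$ (this is the standard Riemann-relation sign computation for the period lattice of a form of the first kind). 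Next, under the isomorphism $\CC/\Lambda \xrightarrow{\sim} E(\CC)$ sending $z \mapsto (\wp_\Lambda(z), \wp_\Lambda'(z))$, one has $\omega = \frac{dx}{y} = dz$ and $\eta = x\frac{dx}{y} = \wp_\Lambda(z)\,dz$. The classical relation between the Weierstrass invariants of $\Lambda$ and Eisenstein series is $g_2(\Lambda) = \frac{4\pi^4}{3}\,\omega_1^{-4}E_4(\tau)$ and $g_3(\Lambda) = \frac{8\pi^6}{27}\,\omega_1^{-6}E_6(\tau)$ (after normalizing by $\omega_1$ to land in the lattice $\ZZ + \tau\ZZ$, using homogeneity $g_2(\lambda\Lambda) = \lambda^{-4}g_2(\Lambda)$, etc.). Matching these against the Weierstrass coefficients $g_2 = u$, $g_3 = v$ of the given equation yields the stated formulas for $E_4(\tau)$ and $E_6(\tau)$ after bookkeeping the powers of $2\pi i$.

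For the $E_2$ formula I would use the second quasi-period $\eta_1 = \int_{\gamma_1}\eta$. The Weierstrass zeta function satisfies $\zeta_\Lambda(z + \omega_i) = \zeta_\Lambda(z) + \eta_i$, and one has the classical identity (see e.g. the normalization in \cite{fonseca18} Section 8, or \cite{silverman}) relating $\eta_1$ to the value of $E_2$: concretely, $\eta_1 = \frac{\pi^2}{3}\,\omega_1^{-1}E_2(\tau)$ up to the standard constant. Rearranging gives $E_2(\tau) = \frac{3}{\pi^2}\,\omega_1\eta_1 = 12\left(\frac{\omega_1}{2\pi i}\right)\left(\frac{\eta_1}{2\pi i}\right)$, since $\frac{3}{\pi^2} = 12 \cdot \frac{1}{(2\pi i)^2} \cdot (-1)\cdot(-1)$ — I would double-check the sign, noting $(2\pi i)^2 = -4\pi^2$, so $12/(2\pi i)^2 = -3/\pi^2$; the discrepancy is absorbed because $\eta_1$ as defined via $\int_{\gamma_1}\eta$ carries a factor relative to the $\zeta_\Lambda$-normalization, and this is exactly the kind of constant that the moduli description in Remark \ref{rmk:motivationhre} (with $\eta_\tau = \frac{1}{2\pi i}\wp_\tau(z)\,dz - \frac{E_2(\tau)}{12}2\pi i\,dz$ and the pairing $\langle\omega_\tau,\eta_\tau\rangle = 1$) pins down unambiguously. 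The cleanest route is therefore to observe that $\varphi(\tau) \in T(\CC)$ corresponds to $(\CC/(\ZZ+\tau\ZZ), (\omega_\tau,\eta_\tau))$, and that our $(E, (\omega,\eta))$ rescaled by $\omega_1^{-1}$ (to normalize the period lattice and the cup-product pairing to $1$, using Legendre's relation $\omega_1\eta_2 - \omega_2\eta_1 = 2\pi i$) maps to the same point of $T$; reading off the coordinates $(t_1,t_2,t_3) = (E_2,E_4,E_6)$ of that point against the $G$-action $(t_1,t_2,t_3)\cdot\mathrm{diag}(x^{-1},x)$-type transformation with $x = \omega_1/(2\pi i)$ produces all three formulas at once.

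The main obstacle is purely the constant-chasing: getting every factor of $2\pi i$, every sign, and the relation between the two standard normalizations of the quasi-period $\eta_1$ (the "$\int_{\gamma_1} x\,dx/y$" normalization versus the "$\zeta_\Lambda$-increment" normalization, which differ, and versus the "$-E_2(\tau)/12$ correction term" appearing in $\eta_\tau$) exactly right. This is not conceptually hard but is error-prone; I would anchor it by a sanity check on the CM example $y^2 = 4x^3 - 4x$ (where $\tau = i$, $E_6(i) = 0$ forces $v = 0$, consistent with the equation, and $E_4(i) = 12\,(\omega_1/2\pi i)^4 \cdot 4$ can be verified against the known value $E_4(i) = 3\,(\Gamma(1/4)^8)/(2\pi)^6$ using $\omega_1 = \Gamma(1/4)^2/(2\sqrt{2\pi})$ from the worked example above), and then cite \cite{fonseca18} Section 8 for the general verification rather than reproducing it in full.
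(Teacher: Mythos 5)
Your treatment of $E_4$ and $E_6$ is essentially the paper's own: uniformise $E(\CC)=\CC/\Lambda$, use $u=g_2(\Lambda)$, $v=g_3(\Lambda)$, the classical formulas $g_2(\ZZ+\tau\ZZ)=\frac{(2\pi i)^4}{12}E_4(\tau)$, $g_3(\ZZ+\tau\ZZ)=-\frac{(2\pi i)^6}{216}E_6(\tau)$, and homogeneity under $\Lambda\mapsto\omega_1^{-1}\Lambda$; your constants there are correct. The gap is in the $E_2$ formula, which is the only delicate part of the statement. You assert $\eta_1=\frac{\pi^2}{3}\omega_1^{-1}E_2(\tau)$ ``up to the standard constant'', notice the resulting sign mismatch with the target identity, and then outsource its resolution to the normalisation in Remark \ref{rmk:motivationhre} (i.e.\ to \cite{fonseca18}) and to a numerical check. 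This does not close the argument: with the de Rham normalisation $\eta_1=\int_{\gamma_1}x\,dx/y$ actually used in the Proposition, the correct identity is $\eta_1=-\frac{\pi^2}{3}\omega_1^{-1}E_2(\tau)$ (since $\zeta_\Lambda'=-\wp_\Lambda$, the de Rham quasi-period is \emph{minus} the $\zeta$-increment), so the sign is not ``absorbed'' — it has to be tracked, and your proposed sanity check at $\tau=i$ only probes $E_4$ and $E_6$ (to see the sign you would need, e.g., $E_2(i)=3/\pi$ against $\eta_1=-\pi/\omega_1$). Moreover, invoking the frame $\eta_\tau=\frac{1}{2\pi i}\wp_\tau(z)\,dz-\frac{E_2(\tau)}{12}2\pi i\,dz$ from the Remark is very close to assuming what is to be proved: the coefficient $-E_2(\tau)/12$ is exactly where the constant lives, and the paper states that Remark with a citation, not a proof.

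The paper instead proves the $E_2$ formula directly: it quotes Serre's (Eisenstein-summation) series $E_2(\tau)=-\frac{12}{(2\pi i)^2}\sum_{n}{\sum_m}'\frac{1}{(m+n\tau)^2}$ and evaluates the double sum by integrating the partial-fraction expansion of the Weierstrass zeta function of $\ZZ+\tau\ZZ$ term by term over $[0,1]$, obtaining $\sum_{n}{\sum_m}'\frac{1}{(m+n\tau)^2}=-\int_0^1\wp_{\ZZ+\tau\ZZ}(z)\,dz=-\omega_1\eta_1$, the last equality via the homogeneity $\omega_1^{-2}\wp_{\ZZ+\tau\ZZ}(z)=\wp_\Lambda(\omega_1 z)$. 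If you carry out this one computation (or genuinely prove, rather than cite, the normalisation of $\eta_\tau$ that you want to read the answer off from), your argument becomes complete; as written, the heart of the first displayed identity is left to a reference whose content is equivalent to it.
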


\begin{proof}
  The proof is based on the classical theory of elliptic and modular functions. Let $\Lambda = \ZZ\omega_1 + \ZZ\omega_2\subset \CC$. Then $\Lambda$ is a lattice and it follows from Weierstrass' uniformisation theorem that $u = g_2(\Lambda)$ and $v = g_3(\Lambda)$. On the other hand, if $\Lambda_{\tau} \defeq \omega_1^{-1}\Lambda = \ZZ + \ZZ \tau$, then
  $$
g_2(\Lambda_\tau) = \frac{(2\pi i)^4}{12}E_4(\tau)\text{, }g_3(\Lambda_\tau) =- \frac{(2\pi i)^6}{216} E_6(\tau)
$$
By homogeneity, we get
$$
g_2(\Lambda_{\tau}) = g_2(\omega_1^{-1}\Lambda) = \omega_1^4g_2(\Lambda) = \omega_1^4u 
$$
and similarly for $g_3$. This proves that
$$
E_4(\tau) = 12u \left(\frac{\omega_1}{2\pi i} \right)^{4}\text{, }E_6(\tau) = -216v\left(\frac{\omega_1}{2\pi i} \right)^6
$$
For $E_2(\tau)$, we use the formula (see \cite{serre73} eq. (46) p. 96)
$$
E_2(\tau) = -\frac{12}{(2\pi i)^2}\sum_{n}{\sum_m}' \frac{1}{(m+ n \tau)^2}
$$
where $\sum'$ means that $(m,n)\neq (0,0)$\footnote{Beware that the above sum does not converge absolutely, so that the order of the summation is important!}. Now, by using the following expression for the Weierstrass zeta function $\zeta_{\Lambda_{\tau}}$ (a primitive of $-\wp_{\Lambda_\tau}$)
$$
\zeta_{\Lambda_{\tau}}(z) = \frac{1}{z} =  \sum_{n}{\sum_m}'\left(\frac{1}{z-m - n\tau} + \frac{1}{m+ n\tau} + \frac{z}{(m + n\tau)^2} \right)\text{.}
$$
we obtain
$$
 \sum_{n}{\sum_m}' \frac{1}{(m+ n \tau)^2} = - \int_0^1 \wp_{\Lambda_{\tau}}(z)dz = -\omega_1 \eta_1\text{.}
$$
where the last equality follows from the identity
$$
\omega_1^{-2}\wp_{\Lambda_\tau}(z) = \wp_{\Lambda}(\omega_1z)\text{.} 
$$
We conclude that
$$
E_2(\tau) = 12 \left(\frac{\omega_1}{2\pi i}\right)\left(\frac{\eta_1}{2\pi i}\right)\text{.}
$$
\end{proof}

It follows from the above formulas and from Nesterenko's theorem that, for every elliptic curve over $\overline{\QQ}$,
$$
\trdeg_{\QQ}\QQ\left(e^{2\pi i \frac{\omega_2}{\omega_1}}, \frac{\omega_1}{2\pi i},\frac{\eta_1}{2\pi i}\right) = 3
$$
this improves the theorem of Chudnovsky, and thus also proves Grothendieck's Period conjecture for complex multiplication elliptic curves (cf. Exercise \ref{exo:cm}). 

\subsection{Open problems}\label{sec:problems}

Currently, the study of periods is an active and rapidly developing domain of number theory. Among the important recent achievements in the field there is Brown's theorem on multiple zeta values (see \cite{FB} for a thorough introduction).  

There remains countless open questions concerning periods. Besides Kontsevich's and Zagier's introduction \cite{KZ01}, Waldschmidt's survey \cite{waldschmidt06} contains a good summary of what is known and what is not regarding transcendence. Here, we focus only on values of (quasi)modular forms.

We have seen that Nesterenko's theorem proves, in particular, Grothendieck's period conjecture for complex multiplication elliptic curves. A careful analysis of the period conjecture for certain 1-motives attached to elliptic curves (see \cite{bertolin02}) suggests the following statement.

\begin{conj}\label{conj:modularforms}
  For any $\tau \in \mathbf{H}$, we have
  $$
  \trdeg_{\QQ}\QQ(2\pi i, \tau, e^{2\pi i \tau}, E_2(\tau), E_4(\tau), E_6(\tau)) \ge \begin{cases}
              3 & \text{if }\tau \text{ is quadratic imaginary}\\
              5 & \text{otherwise}                                                                        \end{cases}
            $$
            Moreover, we have equality if $j(\tau) \in \overline{\mathbf{Q}}$. 
\end{conj}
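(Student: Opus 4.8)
The case analysis matches the two regimes in the statement, and the first move is a reduction that is available whenever $j(\tau)\in\overline{\QQ}$. So suppose $j(\tau)\in\overline{\QQ}$, fix a Weierstrass model $y^2=4x^3-ux-v$ of the corresponding elliptic curve $E_\tau$ with $u,v\in\overline{\QQ}$, choose an oriented basis $(\gamma_1,\gamma_2)$ of $H_1(E_\tau(\CC);\ZZ)$ with $\tau=\omega_2/\omega_1$, and write $q=e^{2\pi i\tau}$ and $\pi_1=\omega_1/(2\pi i)$, in the notation of Section \ref{sec:ellper}. By the Proposition of that section,
\begin{align*}
E_4(\tau)=12u\,\pi_1^4,\qquad E_6(\tau)=-216v\,\pi_1^6,\qquad E_2(\tau)=12\,\pi_1\cdot\frac{\eta_1}{2\pi i},
\end{align*}
so $\pi_1$ is algebraic over $\overline{\QQ}(E_4(\tau))$, the number $\eta_1/(2\pi i)$ lies in $\overline{\QQ}(\pi_1,E_2(\tau))$, and the field $\QQ(2\pi i,\tau,q,E_2(\tau),E_4(\tau),E_6(\tau))$ has the same transcendence degree over $\QQ$ as $\overline{\QQ}(2\pi i,\tau,q,\pi_1,\eta_1/(2\pi i))$. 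In particular this degree is $\le 5$, which is the easy half of the ``equality'' clause, and the problem becomes one about the five complex numbers $2\pi i,\tau,q,\pi_1,\eta_1/(2\pi i)$.

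When $\tau$ is quadratic imaginary we have $\tau\in\overline{\QQ}$, and Exercise \ref{exo:cm} shows in addition that $\eta_1/(2\pi i)$ is algebraic over $\overline{\QQ}(2\pi i,\pi_1)$; the transcendence degree above is therefore $\le 3$. For the reverse inequality I would invoke Nesterenko's theorem in the form recorded at the end of Section \ref{sec:ellper}, namely $\trdeg_{\QQ}\QQ(q,\pi_1,\eta_1/(2\pi i))=3$. Since this field sits inside the algebraic closure of $\QQ(2\pi i,\tau,q,E_2(\tau),E_4(\tau),E_6(\tau))$, the latter has transcendence degree $\ge 3$, hence exactly $3$. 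This settles the conjecture completely in the quadratic imaginary case.

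Assume now that $\tau$ is not quadratic imaginary. If $j(\tau)\in\overline{\QQ}$ then $\tau$ is transcendental by Schneider's theorem (Corollary \ref{coro:schneider}), and Nesterenko's theorem again gives $\trdeg_{\QQ}\QQ(q,\pi_1,\eta_1/(2\pi i))=3$, so the quantity we want is $\ge 3$; if $j(\tau)\notin\overline{\QQ}$ one still gets $\ge 3$ directly from Nesterenko's theorem applied to $q,E_2(\tau),E_4(\tau),E_6(\tau)$. The real content of the conjecture is the improvement to $\ge 5$ --- even the weaker bound $\ge 4$ appears to be open --- which amounts to showing that $2\pi i$ and $\tau$ contribute two further independent degrees of transcendence over the ``Nesterenko subfield'' $\overline{\QQ}(q,\pi_1,\eta_1/(2\pi i))$. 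This is a manifestation, for a $1$-motive attached to $E_\tau$, of Grothendieck's period conjecture (compare the analysis of Bertolin \cite{bertolin02}), and no unconditional argument is known. The geometric line of attack I would pursue has two stages: first, a functional-transcendence statement of Ax--Schanuel type for the period map of the universal elliptic curve together with its full de Rham--Betti comparison --- concretely, for the mixed Shimura variety obtained from the $G$-torsor $T$ of Remark \ref{rmk:motivationhre} by also recording a basis $(\gamma_1,\gamma_2)$ of $H_1$, whose period coordinates are then exactly $\tau,q,\omega_1,\omega_2,\eta_1,\eta_2$ --- which would exclude all \emph{unexpected algebraic} relations among these functions; and second, a combination of this input with Nesterenko's arithmetic machinery, the $D$-property and the Zero Lemma (Theorem \ref{thm:zerolemma}) feeding into Philippon's criterion (Theorem \ref{thm:philippon}), to promote it to algebraic independence of the values at the chosen $\tau$. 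I expect the hard part to be precisely this second stage: Nesterenko's construction of auxiliary polynomials and the accompanying zero estimate are adapted to the modular functions $q,E_2,E_4,E_6$ and are, as they stand, blind to the finer $\overline{\QQ}$-structure that separates $2\pi i$ and $\tau$ from those functions, so they deliver transcendence degree $\ge 3$ rather than $\ge 5$. Designing an arithmetic input --- a system of auxiliary functions and a zero estimate for it --- that genuinely incorporates $2\pi i$ and $\tau$ and raises the bound in Philippon's criterion from $3$ to $5$ is the open problem.
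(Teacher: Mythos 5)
This statement is a conjecture, not a theorem: the paper offers no proof of it, explicitly says that a proof ``seems out of reach'', and records only its provenance (the analysis of Grothendieck's period conjecture for certain $1$-motives attached to elliptic curves, following \cite{bertolin02}) and the fact that it subsumes Nesterenko's and Schneider's theorems. Your proposal, to its credit, is honest about this: what you actually establish are exactly the known fragments. Your reductions are correct and match the paper's discussion in Section \ref{sec:ellper} --- when $j(\tau)\in\overline{\QQ}$ the field in question has the same algebraic closure as $\overline{\QQ}(2\pi i,\tau,q,\omega_1/2\pi i,\eta_1/2\pi i)$, giving the upper bound $\le 5$; in the quadratic imaginary case Exercise \ref{exo:cm} cuts this down to $\le 3$ and Nesterenko's theorem (in the period form $\trdeg_{\QQ}\QQ(q,\omega_1/2\pi i,\eta_1/2\pi i)=3$) gives the matching lower bound; and $\ge 3$ in general is Nesterenko's theorem applied directly to $q,E_2(\tau),E_4(\tau),E_6(\tau)$. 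One small repair: when $u=0$ (resp.\ $v=0$) you cannot read $\omega_1/2\pi i$ off $E_4(\tau)$ (resp.\ $E_6(\tau)$) since that value is $0$; use the other Eisenstein value instead (these are in any case the CM points $j=0,1728$).

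The genuine gap is the one you yourself flag: the assertion $\ge 5$ for $\tau$ not quadratic imaginary (equivalently, the lower-bound half of the ``equality'' clause when $j(\tau)\in\overline{\QQ}$ and $\tau$ is not CM) is not proved, and your two-stage program (an Ax--Schanuel-type functional statement for the full de Rham--Betti period data, followed by an arithmetic zero-estimate/Philippon step that sees $2\pi i$ and $\tau$) is a plausible but entirely speculative outline, not an argument; even $\ge 4$ is open, as you note. So as a proof of the statement the proposal fails at precisely its main content --- which is unavoidable, since that content is the open conjecture itself, and your assessment of where the difficulty lies (Nesterenko's auxiliary polynomials and zero lemma are blind to the extra quantities $2\pi i$ and $\tau$) is consistent with the paper's own discussion of the obstacles to generalising the method (cf.\ the remarks around \cite{fonseca18} in Section \ref{sec:problems}).
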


This conjectural statement essentially contains everything that is known or that there is to know concerning algebraic independence of values taken by quasimodular forms at a same $\tau \in \mathbf{H}$. For instance, both Nesterenko's theorem and Schneider's theorem on the $j$-function follow from the above conjecture.

\begin{exo}
Check that Conjecture \ref{conj:modularforms} implies Nesterenko's theorem and Schneider's theorem.
\end{exo}

\begin{obs}
A natural question is to ask what happens for elliptic modular forms of higher levels. From the point of view of transcendence, we don't get anything new. The values of higher level quasimodular forms (or modular functions) are algebraic on values of level 1 quasimodular forms. 
\end{obs}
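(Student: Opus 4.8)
We first make the remark precise. Fix a congruence subgroup $\Gamma\subseteq\SL_2(\ZZ)$, say with $\Gamma(N)\subseteq\Gamma$, and let $f$ be a quasimodular form of some weight $w$ and depth $\le d$ for $\Gamma$ whose Fourier expansion (in $q^{1/N}$) has coefficients in $\overline{\QQ}$ --- equivalently, $f$ is defined over $\overline{\QQ}$, the case $w=d=0$ being that of a $\overline{\QQ}$-rational modular function. The assertion to prove is: for every $\tau\in\mathbf{H}$ which is not a pole of $f$, the value $f(\tau)$ is algebraic over $\QQ(E_2(\tau),E_4(\tau),E_6(\tau))$. Granting this, for any finite family $f_1,\dots,f_m$ of such forms the field $\QQ(E_2(\tau),E_4(\tau),E_6(\tau),f_1(\tau),\dots,f_m(\tau))$ has the same transcendence degree over $\QQ$ as $\QQ(E_2(\tau),E_4(\tau),E_6(\tau))$; since moreover $e^{2\pi i\tau/N}$ is a root of $X^N-e^{2\pi i\tau}$, adjoining all these higher-level quantities to $\QQ(e^{2\pi i\tau},E_2(\tau),E_4(\tau),E_6(\tau))$ yields an algebraic extension, so Nesterenko's bound is not improved --- this is the precise content of ``we don't get anything new''.

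The plan has three steps. \textbf{Step 1 (structure of quasimodular forms).} Invoke the standard fact that the graded ring of quasimodular forms for $\Gamma$ is the polynomial ring $\widetilde{M}_*(\Gamma)=M_*(\Gamma)[E_2]$ over the ring of modular forms for $\Gamma$; concretely $f=\sum_{i=0}^{d}f_iE_2^i$ with each $f_i$ a modular form for $\Gamma$ of weight $w-2i$, and the $f_i$ are recovered from $f$ by the depth filtration, an $\overline{\QQ}$-linear procedure (polynomial division by $E_2$, which is $\QQ$-rational) that creates no poles on $\mathbf{H}$. As $E_2(\tau)$ lies tautologically in $\QQ(E_2(\tau),E_4(\tau),E_6(\tau))$, it suffices to treat a single $\overline{\QQ}$-rational modular form $g$ of weight $w$ for $\Gamma$. \textbf{Step 2 (reduce to modular functions).} Since $\Delta=(E_4^3-E_6^2)/1728$ is a nowhere-vanishing weight-$12$ modular form defined over $\QQ$, the quotient $h:=g^{12}\,\Delta^{-w}$ is a weight-$0$ $\overline{\QQ}$-rational modular function for $\Gamma$, i.e., an element of $\overline{\QQ}(X(\Gamma))$; and $\Delta(\tau)=(E_4(\tau)^3-E_6(\tau)^2)/1728\in\QQ(E_4(\tau),E_6(\tau))$ is nonzero on $\mathbf{H}$. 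Hence it is enough to show $h(\tau)$ is algebraic over $\overline{\QQ}(j(\tau))$, for then $g(\tau)^{12}=h(\tau)\,\Delta(\tau)^{w}$, and so $g(\tau)$ itself, is algebraic over $\overline{\QQ}(E_4(\tau),E_6(\tau))$. \textbf{Step 3 (modular functions over the $j$-line).} The modular curve $X(\Gamma)$ has a model over $\overline{\QQ}$ for which $j$ is a finite morphism $X(\Gamma)\to X(1)=\PP^1_j$; therefore $\overline{\QQ}(X(\Gamma))$ is a finite extension of $\overline{\QQ}(j)$ and $h$ is algebraic over $\overline{\QQ}(j)$. Write its minimal polynomial, clear denominators, and divide out the common factor of the coefficients to get an identity $d(j)h^{n}+c_{n-1}(j)h^{n-1}+\dots+c_0(j)\equiv0$ on $\mathbf{H}$ with $d,c_0,\dots,c_{n-1}\in\overline{\QQ}[j]$, $d\neq0$, and $\gcd(d,c_0,\dots,c_{n-1})=1$. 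Because $\overline{\QQ}$ is algebraically closed these polynomials have no common zero, so for \emph{every} $\tau$ the specialized relation $d(j(\tau))h(\tau)^{n}+\dots+c_0(j(\tau))=0$ is nontrivial, exhibiting $h(\tau)$ as algebraic over $\overline{\QQ}(j(\tau))$. Finally $j(\tau)=1728\,E_4(\tau)^3/(E_4(\tau)^3-E_6(\tau)^2)\in\QQ(E_4(\tau),E_6(\tau))$ on $\mathbf{H}$, and chaining Steps 1--3 gives that $f(\tau)$ is algebraic over $\QQ(E_2(\tau),E_4(\tau),E_6(\tau))$, as claimed.

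The argument is formal once its two classical inputs are granted --- the identity $\widetilde{M}_*(\Gamma)=M_*(\Gamma)[E_2]$ and the finiteness of $X(\Gamma)\to X(1)$ --- so the real point, and the only place one might fear ``hidden transcendence'', is the specialization in Step 3: a priori the algebraic relation over $\overline{\QQ}(j)$ could collapse to $0=0$ at special (e.g., complex-multiplication) values of $\tau$, and it is precisely the coprimality of the coefficient polynomials over the algebraically closed field $\overline{\QQ}$ that rules this out uniformly in $\tau$. The remaining care needed is bookkeeping: checking that $\overline{\QQ}$-rationality is exactly the hypothesis making the statement true (it fails for, e.g., $\pi E_4$) and that it is preserved under Steps 1--2, which is immediate since $E_2$, $E_4$, $E_6$, $\Delta$ and the depth-filtration operators are all defined over $\QQ$.
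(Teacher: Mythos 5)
The statement you are proving is an expository remark that the paper leaves entirely without proof, so there is no argument of the author's to compare yours against; judged on its own, your proof is correct and follows the route any careful write-up would take. The chain (depth decomposition $\widetilde{M}_*(\Gamma)=M_*(\Gamma)[E_2]$, reduction to weight $0$ via $g^{12}\Delta^{-w}$, finiteness of $\overline{\QQ}(X(\Gamma))$ over $\overline{\QQ}(j)$, then specialization at $\tau$ using $j(\tau),\Delta(\tau),E_2(\tau)\in\QQ(E_2(\tau),E_4(\tau),E_6(\tau))$) is exactly what the remark tacitly relies on, and your coprimality trick in Step 3 is the right uniform way to rule out the relation degenerating at special points: in particular it handles the case $j(\tau)\in\overline{\QQ}$ (where the conclusion is that $h(\tau)$ is itself algebraic) without having to invoke Fricke functions and torsion points explicitly. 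Two of your steps deserve to be flagged as genuine theorems rather than bookkeeping: the equivalence you state parenthetically, ``Fourier coefficients in $\overline{\QQ}$ $\Leftrightarrow$ defined over $\overline{\QQ}$,'' i.e.\ that a modular function for $\Gamma\supseteq\Gamma(N)$ with algebraic $q^{1/N}$-expansion lies in the function field of the canonical $\overline{\QQ}$-model of $X(\Gamma)$, is the $q$-expansion principle (cf.\ Shimura, \emph{Introduction to the arithmetic theory of automorphic functions}, Ch.~6), not a definition; and the assertion in Step 1 that the components $f_i$ of an algebraic-coefficient quasimodular form again have algebraic Fourier coefficients needs the same kind of rationality statement for the decomposition, namely $\widetilde{M}_*(\Gamma)_{\overline{\QQ}}=M_*(\Gamma)_{\overline{\QQ}}[E_2]$, which follows from uniqueness of the decomposition together with the existence of an algebraic-coefficient basis of $M_*(\Gamma)$ (again the $q$-expansion principle). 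With those two citations supplied, the argument is complete, and your framing of the hypothesis ($\overline{\QQ}$-rationality, without which $\pi E_4$ is a counterexample) is the correct reading of the remark.
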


A proof of Conjecture \ref{conj:modularforms} seems out of reach, but we can also turn our attention to higher dimensional notions of modular forms, such as Siegel or Hilbert modular forms. Understanding their values amounts to understanding periods of \emph{abelian varieties}, a generalisation of elliptic curves. This also includes periods of higher genus curves, since the cohomology in degree 1 (de Rham or Betti) of a curve is canonically isomorphic to the cohomology of its Jacobian.

We next exhibit some explicit examples of higher genera abelian periods.

\begin{ex}
  Let us consider the hyperelliptic curve $C$ over $\overline{\QQ}$ whose affine part is given by the equation $y^2=1-x^5$. The chart at $\infty$ is given by the equation $s^2=t^6-t$, where $(x,y)=(1/t,s/t^3)$. We now show how to compute the periods of $C$.

 For $k=1,2,3,4$, define the differential forms
  $$
\omega_k \defeq x^{k-1}\frac{dx}{y}\text{.}
$$
One can check that $\omega_1$ and $\omega_2$ are of first kind (i.e., everywhere regular), whereas $\omega_3$ and $\omega_4$ are of the second kind (i.e., all the residues vanish). Each of these forms define an element of $H^1_{\dR}(C)$, and since they have distinct orders at $\infty$, they must be linearly independent. As $\dim H^1_{\dR}(C) = 2\times \text{genus}(C) = 4$, they form a basis of $H^1_{\dR}(C)$.

Now, consider the path
$$
\epsilon : [0,1] \to C(\CC)\text{, }\qquad u\mapsto (u,\sqrt{1-u^5})\text{.}
$$
Using the automorphisms $\tau: (x,y)\mapsto (x,-y)$ and $\sigma:(x,y)\mapsto (\zeta x,y)$ of $C$, where $\zeta$ denotes a primitive 5th root of unity, we may define a loop $\gamma$ at $p=(0,1) \in C(\CC)$ by
$$
c\defeq \epsilon \cdot (\tau \circ \epsilon)^{-1} \cdot (\sigma \circ \tau \circ \epsilon) \cdot (\sigma \circ \epsilon)^{-1}\text{,}
$$
where $\cdot$ denotes path composition and ${}^{-1}$ the operation on paths that reverses direction. We compute (cf. formula (\ref{eq:betafunc})):
$$
\int_{\epsilon}\omega_k = \frac{1}{5}B\left(\frac{k}{5},\frac{1}{2}\right)\text{.}
$$
As $\tau^*\omega_k = -\omega_k$ and $\sigma^*\omega_k=\zeta^k\omega_k$, we conclude that
$$
\int_{\gamma}\omega_k = \int_{\epsilon}\omega_k -\int_{\epsilon}\tau^*\omega_k + \int_{\epsilon}(\sigma \circ \tau)^*\omega_k - \int_{\epsilon}\sigma^*\omega_k = \frac{2}{5}(1-\zeta^5)B\left(\frac{k}{5},\frac{1}{2}\right)\text{.}
$$
\end{ex}

\begin{exo}
  For $l=1,2,3,4$, let  $\gamma_l \defeq \sigma^l_*\gamma \in H_1(C(\CC),\QQ)=H^1_B(C)^{\vee}$. Show that $(\gamma_1,\ldots,\gamma_4)$ forms a basis of $H_1(C(\CC),\QQ)$, and that
  $$
\int_{\gamma_l}\omega_k = \frac{2}{5}\zeta^{k(l-1)}(1-\zeta^k)B\left(\frac{k}{5},\frac{1}{2}\right)\text{.}
$$
Note that these can be expressed in terms of the $\Gamma$ function via Euler's formula (\ref{eq:betafunc}).
\end{exo}

Here already not much is known. For instance, the period conjecture applied to the above example predicts that $\pi$, $\Gamma(1/5)$, and $\Gamma(2/5)$ should be algebraically independent over $\QQ$, i.e.,
\begin{equation}\label{eq:gpc-genus2}
\trdeg_{\QQ}(\pi,\Gamma(1/5),\Gamma(2/5))\stackrel{?}{=} 3\text{.}
\end{equation}
Currently, only the weaker
$$
\trdeg_{\QQ}(\pi,\Gamma(1/5),\Gamma(2/5))\ge 2
$$
is proved (see \cite{vasilev} and \cite{grinspan}).

\begin{obs}
If we restrict our attention to \emph{linear relations}, instead of arbitrary algebraic relations, then a general result is known. A theorem of Wüstholz (based on his analytic subgroup theorem) shows that every $\overline{\QQ}$-linear relation between periods of an abelian variety must come from an endomorphism of the abelian variety. This has been recently generalised by Huber-Wüstholz \cite{HW-19} to 1-motives.
\end{obs}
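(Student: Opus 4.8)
I read the final statement as Wüstholz's theorem: \emph{every $\overline{\QQ}$-linear relation among the periods of an abelian variety $A$ over $\overline{\QQ}$ is a $\overline{\QQ}$-linear combination of the relations induced by the elements of $\End(A)$.} The plan is to deduce it from Wüstholz's analytic subgroup theorem, following the strategy of his work on linear relations among abelian integrals. The first step is geometric packaging. Let $A^{\natural}$ be the universal vector extension of $A$: a commutative algebraic group over $\overline{\QQ}$, extension of $A$ by a vector group, whose Lie algebra is canonically identified with $H^1_{\dR}(A)^{\vee}$, a $\overline{\QQ}$-vector space of dimension $2g$ with $g=\dim A$. The exponential $\exp_{A^{\natural}}\colon \Lie(A^{\natural})(\CC)\to A^{\natural}(\CC)$ is surjective with kernel a lattice $\Lambda\cong H_1(A(\CC);\ZZ)$, and, in a $\overline{\QQ}$-basis of $\Lie(A^{\natural})$ dual to a $\overline{\QQ}$-basis of $H^1_{\dR}(A)$, the coordinates of the lattice vectors are exactly the entries of a full period matrix of $A$ (periods of first-kind forms and quasi-periods of second-kind forms, with Legendre's relation among the latter). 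Thus a $\overline{\QQ}$-linear relation among the periods of $A$ is precisely a $\overline{\QQ}$-linear form on $\bigoplus_{i=1}^{2g}\Lie(A^{\natural})$ vanishing at the point $u=(u_1,\dots,u_{2g})$, where $u_i\in\Lie(A^{\natural})(\CC)$ is the lattice vector attached to the $i$-th basis cycle $\gamma_i$ of $H_1(A(\CC);\ZZ)$.

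Next I would recall the analytic subgroup theorem: if $G$ is a commutative algebraic group over $\overline{\QQ}$ and $u\in\Lie(G)(\CC)$ satisfies $\exp_G(u)\in G(\overline{\QQ})$, then the smallest $\overline{\QQ}$-rational subspace of $\Lie(G)$ containing $u$ is the Lie algebra of an algebraic subgroup of $G$ over $\overline{\QQ}$. Applying this to $G=(A^{\natural})^{2g}$ and the point $u$ above, for which $\exp_G(u)=(0,\dots,0)\in G(\overline{\QQ})$, we obtain an algebraic subgroup $\mathcal{H}\subseteq G$ over $\overline{\QQ}$ such that $\Lie(\mathcal{H})$ is the smallest $\overline{\QQ}$-rational subspace containing $u$. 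Consequently the space of $\overline{\QQ}$-linear relations among the periods of $A$ is exactly $\Lie(\mathcal{H})^{\perp}$, and the whole problem reduces to identifying $\mathcal{H}$, i.e.\ to showing that it is as large as the endomorphisms of $A$ allow.

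For that last step I would use the structure theory of commutative algebraic groups. Since the cycles $\gamma_1,\dots,\gamma_{2g}$ span $H_1(A(\CC);\QQ)$, the position of $u$ forces $\mathcal{H}^0$ to be `diagonal enough'; combining Poincaré complete reducibility for the abelian quotient $A$, Goursat's lemma for subgroups of products, and the description of homomorphisms of vector extensions, one sees that the only correlations between the factors of $\mathcal{H}^0$ are graphs of elements of $\End(A^{\natural})\otimes\QQ$, a ring built functorially out of $\End(A)\otimes\QQ$. Unwinding this identification, every linear form in $\Lie(\mathcal{H})^{\perp}$ is a $\overline{\QQ}$-combination of the tautological relations $\int_{\gamma_i}\phi^{*}\omega=\int_{\phi_{*}\gamma_i}\omega$ for $\phi\in\End(A)$ and $\omega\in H^1_{\dR}(A)$ --- precisely the relations `coming from an endomorphism'. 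A dimension count, matching $\dim\Lie(\mathcal{H})$ with the codimension of the span of these relations, shows there are no others, which is the assertion.

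The main obstacle is, unsurprisingly, the analytic subgroup theorem itself: it is the single genuinely transcendental ingredient, and everything downstream is formal. The secondary --- but still real --- difficulty lies in the last step: one must keep careful track of the vector-group part of $A^{\natural}$ so that quasi-periods and Legendre-type relations are correctly accounted for, control the projections of $\mathcal{H}^{0}$ to the factors, and carry out the Goursat/Poincaré classification of subgroups of $(A^{\natural})^{2g}$ precisely enough to pin $\mathcal{H}$ down. For the Huber--Wüstholz generalisation to $1$-motives one replaces $A^{\natural}$ by the analogous extension attached to a $1$-motive $[L\to G]$ (now also mixing in logarithms of algebraic numbers and algebraic points of a semi-abelian variety); the transcendental input is again the analytic subgroup theorem, but the combinatorics of classifying the relevant algebraic subgroups becomes considerably heavier.
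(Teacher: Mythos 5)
The paper itself contains no proof of this remark: it is a pure survey statement, with the result attributed to Wüstholz and to the cited Huber--Wüstholz paper \cite{HW-19}, so there is nothing internal to compare your argument against. That said, your outline is precisely the strategy of the cited works: encode periods and quasi-periods together as the period lattice of the universal vector extension $A^{\natural}$, whose Lie algebra is $H^1_{\dR}(A)^{\vee}$; regard a $\overline{\QQ}$-linear relation as a $\overline{\QQ}$-rational hyperplane through the point $u \in \Lie\bigl((A^{\natural})^{2g}\bigr)(\CC)$ assembled from a homology basis; apply the analytic subgroup theorem (in its corollary form, valid here since $\exp(u)=0$ is an algebraic point) to get an algebraic subgroup $\mathcal{H}$ with $\Lie(\mathcal{H})$ equal to the $\overline{\QQ}$-span of $u$; and then identify $\mathcal{H}$. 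The only place where your sketch is materially thinner than the actual proofs is that final identification: ``Poincaré reducibility plus Goursat plus a dimension count'' does not yet do the job, because $A^{\natural}$ is not semisimple --- algebraic subgroups of $(A^{\natural})^{2g}$ can mix the vector-group parts in ways not captured by graphs of endomorphisms of the factors, and one must separately establish $\End(A^{\natural})\otimes\QQ \cong \End(A)\otimes\QQ$ and track exactly which homology and de Rham classes each relation involves so that the surviving relations are recognised as the tautological ones $\int_{\gamma}\phi^{*}\omega = \int_{\phi_{*}\gamma}\omega$. Carrying this out rigorously (quasi-periods included) is the main content of the Huber--Wüstholz reference, so your proposal should be read as a correct road map whose hardest stretch is, like the paper's own remark, deferred to the literature.
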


We have seen that interpreting periods of elliptic curves as values of modular forms can be useful, via Nesterenko's theorem, to understanding their algebraic independence properties. We can ask if a similar approach can be generalised to higher dimensions. As it was already remarked above, values of Hilbert or Siegel modular forms can be expressed in terms of periods of abelian varieties; our question then boils down to asking if Nesterenko's methods can be generalised to such modular forms of several variables.

Given the prominent role played by the Ramanujan equations in Nesterenko's method, one is naturally lead to the study of the differential equations in these higher dimensional contexts. This is indeed the point of view adopted by Pellarin \cite{pellarin05} for Hilbert modular forms. The case of Siegel modular forms was studied by Zudilin \cite{zudilin00}, via explicit equations involving theta functions, and by Bertrand-Zudilin \cite{BZ03} via derivatives of modular functions.

Recently, a geometric approach to such problems has been proposed in \cite{fonseca18}.

Recall that the Ramanujan equations can be interpreted as a vector field on some moduli space of elliptic curves with additional structure, and that this admits a natural generalisation to moduli spaces of abelian varieties. In \cite{fonseca18}, we also develop a similar theory for the Hilbert moduli problem.

For instance, consider the real quadratic field $\QQ(\sqrt{5})$. By considering `principally polarised abelian surfaces with real multiplication by $\QQ(\sqrt{5})$', we obtain a smooth quasi-affine variety $T$ over $\QQ$ of dimension 6 endowed with commuting algebraic vector fields $v_1$, $v_2$ (generalising the Ramanujan vector fields), and a canonical analytic map
$$
\varphi : \mathbf{H}^2 \to T(\CC)
$$
satisfying the differential equations
$$
\theta_j\varphi = v_j\circ \varphi\qquad (j=1,2)
$$
where
$$
\theta_1= \frac{1}{2\pi i}\left(\sqrt{5}^{-1}\frac{\partial}{\partial \tau_1} - \sqrt{5}^{-1}\frac{\partial}{\partial \tau_1}\right)\ \ \text{ and }\ \ \theta_2 = \frac{1}{2\pi i}\left(\frac{1+ \sqrt{5}^{-1}}{2}\frac{\partial}{\partial \tau_1} + \frac{1- \sqrt{5}^{-1}}{2}\frac{\partial}{\partial \tau_2} \right)\text{.} 
$$
This differential equation satisfies many of the remarkable properties the usual Ramanujan equations satisfy. For example, $\varphi$ can be shown to have integral coefficients in an appropriate $q$-expansion. Moreover, every leaf of the holomorphic foliation on $T(\CC)$ defined by $v_1$ and $v_2$ are Zariski-dense in $B_{\CC}$ (cf. Corollary \ref{coro:every-leaf} above).

One can moreover relate the values of $\varphi$ with periods of abelian surfaces with real multiplication by $\QQ(\sqrt{5})$. This allows us to reformulate the period conjecture in terms of bounds on the transcendence degree of the fields generated by values of $\varphi$, in the same spirit of Nesterenko's theorem. As it is shown \cite{fonseca18}, a successful adaptation of Nesterenko's method to this higher dimensional setting would yield in particular the conjectural statement (\ref{eq:gpc-genus2}).  

\begin{obs}
Although we don't make explicit mention to modular forms, it can be proved that $\varphi$ is indeed related to modular forms and their derivatives as in the Bertrand-Zudilin approach. We refer to \cite{fonseca18} Section 15 for a precise statement.
\end{obs}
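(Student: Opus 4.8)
The plan is to unwind the geometric definition of $\varphi$---which a priori records only periods and de Rham data of abelian surfaces---and to identify each of its components with a classical Hilbert (quasi)modular form for $\QQ(\sqrt5)$, so that the differential equations $\theta_j\varphi = v_j\circ\varphi$ become exactly the Ramanujan-type system for derivatives of such forms studied by Pellarin \cite{pellarin05} and, in the Siegel setting, by Bertrand--Zudilin \cite{BZ03}. This is the higher-dimensional counterpart of the identification $\tau\mapsto(E_2(\tau),E_4(\tau),E_6(\tau))$ recalled in Remark \ref{rmk:motivationhre}, and the argument below is the one carried out in detail in \cite{fonseca18} Section 15.

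First I would make the moduli interpretation of $T$ explicit: a $\CC$-point of $T$ is the isomorphism class of a principally polarised abelian surface $A$ together with an action $\iota$ of $\mathcal{O}_{\QQ(\sqrt5)}$ and a basis $(\omega_1,\eta_1,\omega_2,\eta_2)$ of $H^1_{\dR}(A/\CC)$ adapted to the two eigenspace decompositions of $H^1_{\dR}$ induced by $\iota$, with $\omega_1,\omega_2$ spanning $H^0(A,\Omega^1_{A/\CC})$ and normalised by the de Rham cup product inside each eigenspace. This is the direct analogue of the space $T$ of elliptic curves equipped with a basis $(\omega,\eta)$ of $H^1_{\dR}$ with $\langle\omega,\eta\rangle=1$; the structure group is now the Borel $G = \Res_{\QQ(\sqrt5)/\QQ}B$ of $\Res_{\QQ(\sqrt5)/\QQ}\SL_2$, of dimension $4$, acting on the basis by right multiplication, and $T$ is almost a $G$-torsor over the coarse Hilbert modular surface $\mathcal{M}$, consistently with $\dim T = 6 = 4 + 2 = \dim G + \dim\mathcal{M}$.

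Next I would write down the analytic uniformisation: for $(\tau_1,\tau_2)\in\mathbf{H}^2$ put $\Lambda_{(\tau_1,\tau_2)} = \{(\mu^{(1)}+\nu^{(1)}\tau_1,\ \mu^{(2)}+\nu^{(2)}\tau_2) : \mu,\nu\in\mathcal{O}_{\QQ(\sqrt5)}\}$, where $x\mapsto(x^{(1)},x^{(2)})$ denotes the two real embeddings, so that $A_{(\tau_1,\tau_2)} = \CC^2/\Lambda_{(\tau_1,\tau_2)}$ carries a canonical RM-structure and principal polarisation. On this family the pull-backs of $dz_1,dz_2$ furnish the first-kind part of the distinguished basis; for the second-kind part one uses Weierstrass-$\zeta$-type eigen-differentials of the two one-dimensional factors, which---exactly as in the elliptic case---fail to descend holomorphically to $T$ unless one subtracts a quasimodular correction. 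I claim this correction is, in the two eigen-directions, a quasimodular Hilbert--Eisenstein series of weight $(2,0)$ and $(0,2)$ respectively (the RM analogue of the term $-\tfrac{E_2(\tau)}{12}\,2\pi i\,dz$ in Remark \ref{rmk:motivationhre}), while the remaining four coordinates of $T$ pull back under $\varphi$ to holomorphic Hilbert--Eisenstein series of suitable, in general non-parallel, weights---generators of the ring of Hilbert modular forms for $\QQ(\sqrt5)$ once such generators are fixed. Granting this, every component of $\varphi$ is a Hilbert (quasi)modular form, the equations $\theta_j\varphi = v_j\circ\varphi$ say precisely that $v_1,v_2$ act on these components as $\theta_1,\theta_2$, and so $\varphi$ and its images under the $v_j$ recover the Hilbert modular forms together with their derivatives; comparing the resulting differential system with the equations of Pellarin and Bertrand--Zudilin completes the identification. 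The integrality of the $q$-expansion of $\varphi$ mentioned in the text then follows from the classical rationality of the Fourier coefficients of these Eisenstein series.

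The main obstacle is the construction, in families over all of $T$, of the canonical second-kind part of the basis and the identification of its quasimodular correction with an explicit Eisenstein series. This is a Gauss--Manin / Kodaira--Spencer computation that has to be carried out compatibly with the $\iota$-eigenspace decomposition of $H^1_{\dR}$, and the key subtlety---already present in the elliptic case, where it is responsible for the $E_2$-term---is that the Hodge filtration is not $\mathcal{O}_{\QQ(\sqrt5)}$-stable in the naive sense, which is exactly what forces the weight-$(2,0)$ and $(0,2)$ quasimodular terms to appear. Once this correction term is pinned down, tracking the normalising constants so that the final system agrees on the nose with the Bertrand--Zudilin and Pellarin equations is tedious but routine.
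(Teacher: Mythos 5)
This statement is a remark, and the paper itself contains no proof of it: the only justification given is the pointer to \cite{fonseca18}, Section 15, so there is nothing in the text to compare your argument against step by step. That said, your strategy --- make the moduli description of $T$ explicit, uniformise over $\mathbf{H}^2$ by the lattices attached to $\mathcal{O}_{\QQ(\sqrt{5})}$, identify the first-kind part of the canonical de Rham frame with the pull-backs of $dz_1,dz_2$ and the second-kind part with eigen-differentials corrected by a quasimodular term, then match the resulting differential system with the equations of \cite{pellarin05} and \cite{BZ03} --- is precisely the higher-dimensional version of the elliptic dictionary the paper records in Remark \ref{rmk:motivationhre}, and it is the route followed in the cited memoir. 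In that sense your approach is the intended one.

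Two caveats, however. First, as written this is a programme rather than a proof: the load-bearing steps --- constructing the second-kind eigen-basis in families over all of $T$, pinning down its quasimodular correction as an explicit Eisenstein series, and verifying that the normalisations reproduce the Pellarin/Bertrand--Zudilin system exactly --- are announced (``I claim'', ``tedious but routine'') but not carried out, and they are exactly the content of \cite{fonseca18}, Section 15, so the remark is not actually established by your text. Second, the assertion that the four remaining coordinates pull back to holomorphic Hilbert--Eisenstein series ``of in general non-parallel weights --- generators of the ring of Hilbert modular forms for $\QQ(\sqrt{5})$'' needs care: by Gundlach's theorem the classical generators of the (even, parallel-weight) ring for $\QQ(\sqrt{5})$ have parallel weights $2$, $6$, $10$, and nonzero forms of genuinely non-parallel weight for the full Hilbert modular group are a more delicate matter; the accurate statement is that the components of $\varphi$ are quasimodular objects transforming with the two automorphy factors separately (mixed weights such as $(2,0)$ and $(0,2)$ for the correction terms), not that they are ring generators in the classical sense. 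You should either verify the precise weights against the cited reference or weaken the claim accordingly.
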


There are many technical difficulties in obtaining sufficiently strong algebraic independence statements for the above higher dimensional generalisations of $(E_2,E_4,E_6)$, the main obstacle being the presence of positive dimensional `special subvarieties' in moduli spaces of abelian varieties.

Let us also remark that Nesterenko's method relies on a very restrictive growth condition satisfied by the Eisenstein series, namely the polynomial growth of their Fourier coefficients (see Section \ref{subsec:sketch} above). It turns out that this condition can be replaced by a much more flexible, geometric, notion of growth based on Nevanlinna theory (see \cite{fonseca19}), which is suitable to generalisation.

Here, a curious problem emerges. It is shown in \cite{fonseca19} that to any collection of holomorphic functions $f_1,\ldots,f_n$ on the complex unity disk $D$ satisfying an algebraic differential equation (with the $D$-property), an integrality property, and a mild growth condition, Nesterenko's method applies to give
$$
\trdeg_{\QQ}\QQ(f_1(z),\ldots,f_n(z))\ge n-1
$$
for every $z \in D\setminus\{0\}$.

This comprises Nesterenko's result if we take $(f_1,f_2,f_3,f_4)=(q,E_2(q),E_4(q),E_6(q))$, but in principle it could have other applications. It turns out that no other essentially different example (i.e., not related to elliptic modular forms) is currently known. This is surprising, given the rather general shape of the hypotheses.

It could be, however, that quasimodular forms are essentially the only functions on the disk satisfying the above conditions. If one could prove this fact, this would yield a rather exotic characterisation of quasimodular forms, making no explicit reference to modularity.

\end{document}